\newtheorem{theorem}{Theorem}[section]
\newtheorem{proposition}[theorem]{Proposition}
\newtheorem{lemma}[theorem]{Lemma}
\newtheorem{corollary}[theorem]{Corollary}
\theoremstyle{remark}
\newtheorem{remark}[theorem]{Remark}
\theoremstyle{definition}
\newtheorem{definition}{Definition}[section]
\newcommand{\N}{\mathbb N}     
\newcommand{\R}{\mathbb R}     
\newcommand{\Z}{\mathbb Z}     
\renewcommand{\epsilon}{\varepsilon}
\newcommand{\del}{\partial}
\renewcommand{\kappa}{\varkappa}
\newcommand{\B}{\mathfrak{B}}
\newcommand{\D}{\mathfrak{D}}
\newcommand{\fl}[1]{\lfloor #1 \rfloor}  
\newcommand{\ind}[1]{ \mathbbm{1}_{\{ #1 \}} }
\newcommand{\indf}[1]{ \mathbbm{1}_{#1} }
\definecolor{dgr}{rgb}{0.0, 0.50, 0.15}
\definecolor{dblue}{rgb}{0.0, 0.0, 0.55}
\DeclareMathOperator{\Var}{Var}
\DeclareMathOperator{\Cov}{Cov}
\DeclareMathOperator{\sgn}{sign}
\let\emptyset\varnothing
\title[Self-interacting Random Walks]{Convergence and non-convergence of scaled self-interacting random walks to Brownian motion perturbed at extrema}
\author{Elena Kosygina}
\address{Elena Kosygina\\One Bernard Baruch Way \\ Department of Mathematics, Box B6-230 \\ Baruch College \\ New York, NY 10010 \\ USA}
\email{elena.kosygina@baruch.cuny.edu}
\urladdr{http://www.baruch.cuny.edu/math/elenak/}
\thanks{E.\,Kosygina is partially supported by the Simons Foundation through a Collaboration Grant for Mathematicians \#523625.}
\author{Thomas Mountford}
\address{Thomas Mountford\\\'Ecole Polytechnique F\'ed\'eral de Lausanne\\Department of Mathematics\\EPFL SB MATH PRST\\
MA B1 517 (Bâtiment MA)
Station 8
CH-1015 Lausanne\\
Switzerland}
\email{thomas.mountford@epfl.ch}
\urladdr{http://people.epfl.ch/thomas.mountford}
\thanks{T.\,Mountford was partially supported by the Swiss National Science Foundation, grant FNS 200021L 169691.}
\author{Jonathon Peterson}
\address{Jonathon Peterson\\Purdue University\\Department of Mathematics\\150 N University Street\\West Lafayette, IN  47907\\USA}
\email{peterson@purdue.edu}
\urladdr{http://www.math.purdue.edu/$\sim$peterson}
\thanks{J.\,Peterson was partially supported by the Simons Foundation through a Collaboration Grant for Mathematicians \#635064.}
\subjclass[2010]{Primary 60K35; Secondary 60F17, 60J15}
\keywords{Self-interacting random walks, functional limit theorem, Ray-Knight theorems, Brownian motion perturbed at its extrema, branching-like processes}
\begin{document}

\begin{abstract}
  We use generalized Ray-Knight theorems introduced by B.\,T\'oth in
  \cite{tGRK} together with techniques developed for excited random
  walks as main tools for establishing positive and negative results
  concerning convergence of some classes of diffusively scaled
  self-interacting random walks (SIRWs) to Brownian motions perturbed
  at extrema (BMPE). The work \cite{tGRK} studied two classes of
  SIRWs: asymptotically free and polynomially self-repelling walks.
  For both classes B.\,T\'oth has shown, in particular, that the
  distribution function of a scaled SIRW observed at independent
  geometric times converges to that of a BMPE indicated by the
  generalized Ray-Knight theorem for this SIRW. The question of weak
  convergence of one-dimensional distributions of scaled SIRW remained
  open. In this paper, on the one hand, we prove a full functional
  limit theorem for a large class of {\em asymptotically free} SIRWs which
  includes asymptotically free walks considered in \cite{tGRK}. On the
  other hand, we show that rescaled {\em polynomially self-repelling}
  SIRWs do not converge to the BMPE predicted by the
  corresponding generalized Ray-Knight theorems and, hence, do not
  converge to any BMPE.
\end{abstract}

\maketitle

\section{Introduction and main results}

This work has as its starting point the paper of B.\,T\'oth
  \cite{tGRK}, which along with \cite{tTSAWGBR} and \cite{tTSAW}
  greatly expanded the domain of processes to which the classical
  Ray--Knight approach could be applied. 
  It introduced two families of nearest neighbor self-interacting
  random walks: the asymptotically free random walks and the
  polynomially self-repelling random walks.  This approach, relying on
  tree structures and so inapplicable to higher dimensions,
  established correct scaling and then convergence in distribution of
  appropriately scaled hitting times of the random walks.  It was also
  very close to establishing the convergence in law of one point
  distributions in that it identified the limiting law were such a
  limit to exist.

  B.\,T\'oth also noted that the Ray--Knight theorems for the
  asymptotically free and polynomially self repelling random walks
  were the same as analogous results for a previously studied
  continuous model, in this case the Ray--Knight theorems for Brownian
  motions perturbed at extrema (BMPE)\footnote{See
    \cite[Section3]{cpyBetaPBM}.} (or multiples thereof) with
    parameters determined by the particularities of the original
    model.
    This raised the possibility of finding a general result
  whereby a functional limit theorem for the rescaled walk could
  simply be proven via establishing the Ray--Knight theorems and
  minimal technical conditions.      
  A number of results for excited random walks,
  \cite{dkSLRERW,kpERWPCS,kmpCRW}, weighed positively on this
  possibility except that the ``technical conditions'' remained
  elusive and the treatment varied significantly from model to model.

  In this article we show both the
  usefulness and the limitations of the Ray--Knight approach to establishing
  scaling limits.  On the one hand, we prove that the rescaled
  asymptotically free random walks do indeed converge to a BMPE.  On the
  other hand, we show that the polynomially self-repelling random
  walks do not converge to the natural BMPE limit suggested by the
  Ray--Knight theorems.  This nonconvergence
  result is important as it shows that there is, in fact, no
  general theorem that permits easy passage from Ray--Knight theorems
  to functional convergence, and opens up the question of what natural
  conditions on the original processes ensure functional convergence.
  It also motivates the study of families of processes having the same
  Ray--Knight behavior.  

\subsection{Model description} We consider a discrete time nearest
neighbor self-interacting random walk $(X_i)_{i\ge 0}$ on $\Z$ which
starts at $X_0=0$ and at times $i\in\N$ jumps to one of the two
nearest neighbors: if $X_{i-1}=x$ then $X_i$ is equal to $x\pm 1$ with
probabilities dependent on the numbers of crossings of undirected
bonds $\{x,x\pm1\}$ prior to $i$. More precisely, let
$\Omega=\{\omega=(\omega_i)_{i\ge 0}:\ \omega_0=0,\
|\omega_i-\omega_{i-1}|=1\ \forall i\in\N\}$ be a set of all nearest
neighbor paths originating at 0, $\mathcal{F}_i$, $i\ge 0$, be the
$\sigma$-algebra generated by all subsets of $\Omega$ of the form
$\{\omega_j=x_j,\,\forall j\in\{0,1,\dots,i\}\}$, and
$\mathcal{F}=\sigma\left(\cup_{i=0}^\infty \mathcal{F}_i\right)$.  We set
$\ell_x^0=r_x^0=0$ for all $x\in\Z$ and for each $\omega\in\Omega$
define
\begin{equation}\label{RL}
  r_x^i(\omega):=\sum_{j=1}^i\ind{x,x+1}(\{\omega_{j-1},\omega_j\});\quad \ell_x^i:=\sum_{j=1}^i\ind{x-1,x}(\{\omega_{j-1},\omega_j\}),\quad i\in\N,
\end{equation}
where for two sets $A$ and $S$, $\mathbbm{1}_A(S)=1$ if $S\subseteq A$
and $0$ otherwise. We assume that under the probability measure $P$ on
$(\Omega, \mathcal{F})$ the self-interacting random walk (SIRW)
$X=(X_i(\omega))_{i\ge 0}$ has the following dynamics: $X_0=0$,
\begin{equation}
  \label{sirw}
  P(X_{i+1}=X_i+1\mid {\mathcal F}_i)=1-P(X_{i+1}=X_i-1\mid {\mathcal F}_i)=\frac{w(r_{X_i}^i)}{w(r_{X_i}^i)+w(\ell_{X_i}^i)},\quad i\ge 0,
\end{equation}
where $w:\N_0\to(0,\infty)$ is a given weight function.  Properties
of $w$ (in particular, its asymptotics at infinity) determine the type
of SIRW and its long term behavior.  If the weight function $w$ is
non-increasing then the SIRW is said to be {\em self-repelling}. If
$w$ is non-decreasing then the SIRW is said to be {\em
  self-attracting}.

Fix $\alpha\ge 0$, $p\in(0,1]$, $B\in\R$, $\kappa>0$ and let
$w:\N_0\to(0,\infty)$ satisfy
\begin{equation}
  \label{w}
  \frac{1}{w(n)}=n^\alpha\left(1+\frac{2^pB}{n^p}+O\left(\frac{1}{n^{1+\kappa}}\right)\right)\quad\text{as $n\to\infty$}.
\end{equation}
This model was introduced and studied by B.\,T\'oth in
\cite{tGRK}. According to the terminology of \cite{tGRK}, setting
$\alpha=0$ in \eqref{w} places the model in the {\em asymptotically
  free} regime: $w(n)\sim 1$, while $\alpha>0$ corresponds to the {\em
  polynomially self-repelling} case: $w(n)\sim n^{-\alpha}$.  In
\cite{tGRK}, B.\,T\'oth proved functional limit theorems for the local
time processes of SIRWs (a.k.a.\ generalized Ray--Knight Theorems) and
local limit theorems for the position of a random walker observed at
independent geometric times with linearly growing means
(\cite[Theorems 2A, 2B]{tGRK}). These local limit theorems imply that
if one-dimensional distributions of rescaled SIRWs converge weakly,
then the limiting distribution must be that of (a multiple of) a
Brownian motion perturbed at its extrema, see
Definition~\ref{bmpe}. Even though the original paper \cite{tGRK}
considered only $p=\kappa=1$, the relevant results of that work can be
extended to $p\in(0,1]$ and $\kappa>0$. The question of weak
convergence (not just along an independent geometric sequence of
times) of one-dimensional distributions of the rescaled position of
the walk to the conjectured limit remained open and has motivated our
work.

\subsection{Main results}
\begin{definition}\label{bmpe}
  Given $\theta^+,\theta^-<1$, a BMPE
  $W^{\theta^+,\theta^-}=(W^{\theta^+,\theta^-}(t),t\ge 0)$ with
  parameters $(\theta^+,\theta^-)$ is the pathwise unique solution of
  the equation
\begin{equation*}
  W(t)=B(t)+\theta^+\sup_{s\le t}W(s)+\theta^-\inf_{s\le t}W(s),\quad t\ge 0,\quad W(0)=0,
\end{equation*}
where $(B(t))_{t\ge 0}$ is a standard Brownian motion. 
\end{definition} 
It is known that this solution is adapted to the filtration of $(B(t))_{t\ge 0}$,
has Brownian scaling\footnote{$(cW^{\theta^+,\theta^-}(c^{-2}t))_{t\ge 0}$ has the
  same distribution as $(W^{\theta^+,\theta^-}(t))_{t\ge 0}$ for all
  $c>0$.}, and the triple
$(\inf_{s\le t}W^{\theta^+,\theta^-}(s),\,W^{\theta^+,\theta^-}(t),\,\sup_{s\le t}W^{\theta^+,\theta^-}(s)),\ t\ge 0$, is a strong
Markov process. (See \cite{pwPBM,cpyBetaPBM, cdPUPBM, Dav99} and
references therein.)

B.\,Davis (\cite{Dav96,Dav99}) has shown that BMPEs arise naturally as
diffusive limits of perturbed random walks, so-called $pq$-walks. More
specifically, consider a nearest neighbor random walk which starts
with an unbiased jump from the origin and continues to make unbiased
jumps to one of its neighbors except when it visits an extremal point
of its current range. If at time $n\in\N$ the walk is at its maximum
up to time $n$ then the probability that it jumps to the right in the
next step is $p:=(2-\theta^+)^{-1}$. Similarly, if at time $n\in\N$
the walk is at its current minimum then the probability that it jumps
to the right in the next step is
$q:=1-(2-\theta^-)^{-1}$. \cite[Theorem 1.2]{Dav96} states that
linearly interpolated and rescaled perturbed random walks converge
weakly to the process $W^{\theta^+,\theta^-}$.\footnote{This result
  was shown in \cite{Dav96} under the additional assumption
  $|\theta^+\theta^-|<(1-\theta^+)(1-\theta^-)$ which was
  removed in \cite[Theorem 4.8]{Dav99}.} We note that
when $\theta^+=\theta^-=\theta$ the perturbed walk is a special case
of SIRW with $w(0)=(2-\theta)^{-1}$ and $w(n)=1$ for $n\in\N$. Hence,
the idea that a class of asymptotically free SIRWs with much more
general weight functions $w$ might have BMPEs as scaling limits seems
plausible but not very intuitive: while it is not difficult to see
that the range of a SIRW after $n$ steps is of order $\sqrt{n}$, it is
not at all clear why after the diffusive scaling and away from the
extrema the SIRW should behave essentially as a Brownian motion.

BMPEs also appear as scaling limits of other non-markovian random
walks on $\Z$ such as once reinforced random walks
(\cite{Dav90,Dav96}), excited random walks
(\cite{dCLTERW,dkSLRERW,kpERWPCS, kmpCRW}), and rotor walks with
defects (\cite{HLSH18}).  A few of the above models are of the type
where the (recurrent\footnote{In the sense that the walk visits each $x\in\Z$ infinitely many times with probability 1.}) random walk after visiting a site ``in the
bulk'' (i.e., away from the boundary of its current range) a certain
fixed number of times makes only unbiased steps from that
site\footnote{E.g., this is the case for once
  reinforced random walks, $pq$-walks,
  and excited random walks with
  bounded cookie stacks (\cite{dkSLRERW}).}, and for these models
the convergence to a BMPE seems very intuitive. On the other hand, for
excited random walks with Markovian cookie stacks the work
\cite{kmpCRW}) established the convergence to a multiple of BMPE, even
though the Brownian behavior ``in the bulk'' is not seen at the random
walk level.  Establishing this behavior is non-trivial and requires
intermediate ``mesoscopic'' coarse graining of space and time.

Our first main result is the functional limit theorem for the
asymptotically free case $\alpha=0$.
Following the notation in \cite{tGRK}, define
\begin{equation}\label{UVr}
  U_1(n):=\sum_{j=0}^{n-1}(w(2j))^{-1}\quad\text{and}\quad V_1(n):=\sum_{j=0}^{n-1}(w(2j+1))^{-1}.
\end{equation}
Set
\begin{equation}
  \label{v-u}
  \gamma:=\lim_{n\to\infty}(V_1(n)-U_1(n)).
\end{equation}
Note that if $w$ satisfies \eqref{w} with $\alpha=0$ and $p,\kappa>0$
then the above limit always exists. This $\gamma$ is the main
parameter which identifies the limiting process in
Theorem~\ref{thmAFC}. It is related to the
parameter $\delta$ in \cite{tGRK} by the equation $\gamma=1-\delta/2$.

It is easy to show that if $w$ is monotone, then $\gamma<1$. More
precisely, if $w$ is non-increasing (self-repelling case), then
$\gamma\in[0,1)$, and if $w$ is non-decreasing (self-attracting case),
then $\gamma\le 0$.  For reader's convenience we reproduce the
argument given in \cite[p.\,1340]{tGRK}. If $w$ is non-increasing, then
$1/w$ is non-decreasing and
\[\gamma=\lim_{n\to\infty}(V_1(n)-U_1(n))=\sum_{j=0}^\infty\left(\frac{1}{w(2j+1)}-\frac{1}{w(2j)}\right)\ge
  0.\] On the other hand, $U_1(n+1)-U_1(n)=(w(2n))^{-1}\to 1$ as
$n\to\infty$, and we also get
\begin{align*}
 \gamma=1+\lim_{n\to\infty}(V_1(n)-U_1(n+1))&=1-\frac{1}{w(0)}-\lim_{n\to\infty}\sum_{j=0}^{n-1}\left(\frac{1}{w(2j+2)}-\frac{1}{w(2j+1)}\right)\\ &=1-\frac{1}{w(0)}-\sum_{j=1}^\infty\left(\frac{1}{w(2j)}-\frac{1}{w(2j-1)}\right)<1. 
\end{align*}
If $w$ is non-decreasing, then $V_1(n)-U_1(n)\le 0$ for all $n$ and
$\gamma\le 0$.

\begin{theorem}\label{thmAFC}
  Let $w: \N_0\to (0,\infty)$ be monotone and satisfy \eqref{w} with
  $\alpha=0$, $p\in(1/2,1]$, and $\kappa>0$. Consider a SIRW
  $(X_i)_{i\ge 0}$ defined in \eqref{sirw} with $X_0=0$. Then
\[\left(\frac{X_{\fl{nt}}}{\sqrt{n}}\right)_{t\ge 0}\ \ \Longrightarrow \ \ \left(W^{\gamma,\gamma}(t)\right)_{t\ge 0}\quad \text{as $n\to\infty$}\] in the standard Skorokhod topology on $D([0,\infty))$.
\end{theorem}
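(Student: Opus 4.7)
The plan is to deduce the functional convergence from (i) convergence of the hitting-time process and (ii) tightness of the rescaled walk, using the standard inversion relating a walk to the inverse of its hitting times. Writing $T_x:=\inf\{i\ge 0:X_i=x\}$ for $x\in\Z$, the first goal is to show that for every $K>0$,
\[
\left(\frac{T_{\fl{n\xi}}}{n^2}\right)_{|\xi|\le K}\Longrightarrow (\tau_\xi)_{|\xi|\le K},
\]
where $\tau_\xi$ is the first passage time of level $\xi$ by $W^{\gamma,\gamma}$. Together with tightness of $(X_{\fl{nt}}/\sqrt{n})_{t\ge 0}$ and the almost sure continuity of $W^{\gamma,\gamma}$, this forces the only possible Skorokhod limit to be $W^{\gamma,\gamma}$.

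For the hitting-time convergence I would invoke and sharpen the generalized Ray--Knight theorems of \cite{tGRK}. These express the local-time profile $(\ell_y^{T_x}+r_y^{T_x})_{y\in\Z}$ as a functional of a branching-like process (BLP) driven by the weight function $w$. In the asymptotically free regime $\alpha=0$ the scaled BLP should converge, under diffusive space-time rescaling, to the squared-Bessel-type diffusion appearing in the Ray--Knight description of BMPE (cf.\ \cite{cpyBetaPBM}). Since $T_{\fl{n\xi}}=\sum_y(\ell_y^{T_{\fl{n\xi}}}+r_y^{T_{\fl{n\xi}}})$, this yields convergence of a single $T_{\fl{n\xi}}/n^2$ to $\tau_\xi$; to promote this to joint convergence at several levels one would exploit the near-Markov structure of the BLP in the spatial variable and pass to the limit using the strong Markov property of the triple $(\inf W^{\gamma,\gamma},W^{\gamma,\gamma},\sup W^{\gamma,\gamma})$ noted after Definition~\ref{bmpe}. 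The parameter $\gamma$ emerges in this analysis as the effective drift of the scaled BLP and matches the value $\theta^+=\theta^-=\gamma$ in Davis's description of BMPE; the equality $\theta^+=\theta^-$ is forced by the $\pm$-symmetry of the SIRW dynamics in \eqref{sirw}.

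Tightness of $(X_{\fl{nt}}/\sqrt{n})_{t\ge 0}$ reduces, since the walk moves by $\pm 1$ per step, to excluding very long time intervals between successive first visits of new levels, equivalently to controlling the tail of the total local time accumulated during a single excursion between consecutive maxima (or minima). Such bounds would again come from moment estimates on the BLP, and this is where I expect the assumption $p>1/2$ in \eqref{w} to play a decisive role, since the perturbative term $2^pBn^{-p}$, iterated $O(n)$ times, must still produce controllable second-moment fluctuations. The main obstacle I anticipate is not any single step but the need for \emph{uniform} Ray--Knight estimates: \cite{tGRK} controls the local-time profile at one hitting time only, whereas functional joint convergence across levels, together with the uniform excursion-length tails required for Skorokhod tightness, demands a considerably stronger quantitative theory. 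Because the walk is genuinely non-Markovian, the usual regeneration-based tightness arguments for classical random walks are unavailable, and the entire argument must be carried out within the Ray--Knight/BLP framework; it is natural to expect that the techniques developed for excited random walks in \cite{kpERWPCS,kmpCRW}, in particular the mesoscopic coarse-graining mentioned in the introduction, will provide a template but will require non-trivial adaptation to the SIRW setting.
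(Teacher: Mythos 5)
Your proposal takes a fundamentally different route from the paper, and it contains a conceptual gap that the paper's own negative result (Theorem~\ref{thmPSR}) is designed to expose.

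The paper proves Theorem~\ref{thmAFC} via the Doob decomposition $X_n = M_n + \Gamma_n$, where $M_n$ is a bounded-increment martingale and $\Gamma_n$ is the accumulated conditional drift. One shows (Lemma~\ref{martpart}) that $M_{\fl{nt}}/\sqrt n$ converges to a standard Brownian motion by the martingale CLT, and (Lemma~\ref{driftpart}) that $\Gamma_k$ is uniformly approximated by $\gamma(S_k^X + I_k^X)$ using the site-wise drift variables $\delta_x$ and their law of large numbers along the range. Combined with tightness of the extrema (Proposition~\ref{minmaxtight}), this forces any subsequential limit to solve the defining SDE of $W^{\gamma,\gamma}$. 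Ray--Knight/BLP machinery enters only as a tool for Proposition~\ref{minmaxtight} and Lemma~\ref{lem:smalllt}, not as the mechanism for pinning down the limit law.

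Your plan, by contrast, aims to deduce the functional limit from (i) joint convergence of scaled hitting times $T_{\fl{n\xi}}/n^2$ to the first-passage times $\tau_\xi$ of $W^{\gamma,\gamma}$, and (ii) tightness of $(X_{\fl{nt}}/\sqrt n)$. The gap is in the claim that these two facts ``force the only possible Skorokhod limit to be $W^{\gamma,\gamma}$.'' Hitting-time data (equivalently, Ray--Knight data, since $T_{\fl{n\xi}}$ is the total mass of the local-time profile) determine the law of the running-extremum pair $(\mathcal I^X_n,\mathcal S^X_n)$ of any subsequential limit, but they do not determine the law of the path between successive record times. For a recurrent process the path can oscillate arbitrarily between its running extremes, and many different processes share the same first-passage time distributions. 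Concretely, the paper's Theorem~\ref{thmPSR} is precisely a counterexample to your inference: the polynomially self-repelling walk satisfies generalized Ray--Knight theorems matching those of $W_\alpha$ (and its scaled hitting times converge to those of $W_\alpha$), and the rescaled walk is tight for the same reasons as in the asymptotically free case, yet it does \emph{not} converge to $W_\alpha$. If your step were valid as stated, it would also (falsely) prove convergence of the polynomially self-repelling walk to $W_\alpha$. Some additional input beyond Ray--Knight is therefore indispensable; in the paper that input is the explicit identification of the martingale part and the drift part, which is what certifies the Brownian behaviour of $X$ in the bulk, i.e.\ away from the running extremes — exactly the information that hitting-time data cannot see.

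You correctly anticipate that ``uniform'' Ray--Knight estimates and mesoscopic arguments would be needed and that the proof must live in the Ray--Knight/BLP framework for tightness; this part of your outline is consistent with how the paper uses BLPs in Appendix~\ref{app:afc}. But the characterization of the limit requires the drift decomposition (or an equivalent ``bulk-Brownian'' statement), and that ingredient is absent from your plan.
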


\begin{remark}
  Note that the case $p>1$ is covered by \eqref{w} with $B=0$.  The
  restriction to $p>\frac12$ guarantees that the series representing
  the ``total drift'' at a single site (see \eqref{dxdef} and Lemma
  \ref{lem:bdxfin}) converges absolutely. We think that the case
  $p\in(0,1/2]$ can be considered as well but will certainly require a
  different and technically more involved treatment such as
    coarse graining in the spirit of \cite{kmpCRW}.
\end{remark}

\begin{remark}
  Following \cite{tGRK} we have assumed that $w$ is
  monotone. If $w$ is not monotone (and $\alpha=0$), then $\gamma$ can
  be any real number.  Monotonicity assumption on $w$ naturally
  restricts the parameter $\gamma $ to ``recurrent'' values.  Our
  arguments, provided that $\gamma<1$, do not require monotonicity
  except for the results which we quote from \cite{tGRK}. However, we
  believe that one could treat non-monotonic $w$ with a slightly
  different method and that for $\gamma<1$ the results will remain the
  same.  The removal of the monotonicity assumption would permit
  arbitrary values of $\gamma$ including ``transient'' values (i.e.,
  $\gamma > 1$).  In this case the approach of \cite{kmLLCRW} would
  likely give results of the character of those found in
  \cite{kmLLCRW} and \cite[Section 6]{kzERWsurvey}. We leave these
  questions to an interested reader.
\end{remark}

Our second result concerns the polynomially self-repelling  case
$\alpha>0$. This negative result is rather unexpected and, for this
reason, is, in the authors' opinion, particularly interesting. Let
\begin{equation}
  \label{wlim}
  W_\alpha(t):=(2 \alpha+1)^{1/2}W^{1/2,1/2}(t),\quad t\ge 0,
\end{equation}
where $W^{1/2,1/2}$ is a BMPE with $\theta^+=\theta^-=1/2$ (see
Definition~\ref{bmpe}), and recall two facts which make the process
$W_\alpha$ the natural candidate for a weak limit of
$X^{(n)}:=(X^{(n)}(t))_{t\ge
  0}:=\left(n^{-1/2}X_{\fl{nt}}\right)_{t\ge 0}$ as $n\to\infty$.
\begin{enumerate}[(i)]
\item The generalized Ray--Knight theorems for the sequence of
  processes $(X^{(n)})_{n\in \N}$, \cite[Theorem 1B, Corollary
  1B]{tGRK}, correspond to Ray--Knight theorems for $W_\alpha$ (see
  \cite[Theorems 3.1, 3.4]{cpyBetaPBM}).
\item If for a fixed $t>0$ the sequence $(X^{(n)}(t))_{n\in\N}$
  converges in distribution, then the limiting distribution must be
  that of $W_\alpha(t)$ (see \cite[Theorem 2B, (3.2.9), (3.2.11), and
  Remark on p.\,1334]{tGRK}).
\end{enumerate}
These facts identify the process $W_\alpha$ as the only possible weak
limit of $(X^{(n)})_{n\in\N}$ in the set
$\{cW^{\theta^+,\theta^-},\ \theta^+\vee \theta^-<1,\ c\in\R\}$ of all
scalar multiples of BMPEs.  Moreover, our first main result,
Theorem~\ref{thmAFC}, shows that the BMPEs ``predicted'' by exactly
the same information as in (i), (ii) are bona fide weak limits for a
class of asymptotically free SIRWs.

\begin{theorem} \label{thmPSR} Let $\alpha>0$,
  $(W_\alpha(t))_{t\ge 0}$ be given by \eqref{wlim}, and
  $w(n)=(n+1)^{-\alpha}$ for all $n\in \N_0$. Consider a SIRW
  $(X_i)_{i\ge 0}$ defined in \eqref{sirw} with $X_0=0$. Then
  \[\left(\frac{X_{\fl{nt}}}{\sqrt{n}}\right)_{t\ge 0}\ \
    \centernot\Longrightarrow\ \ \left(W_\alpha(t)\right)_{t\ge 0}\quad \text{as $n\to\infty$}\] in the standard Skorokhod
  topology on $D([0,\infty))$.
\end{theorem}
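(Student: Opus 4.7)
The plan is to argue by contradiction: assume $X^{(n)}\Rightarrow W_\alpha$ in $D([0,\infty))$ and exhibit a continuous path functional whose law under $X^{(n)}$ does not match its law under $W_\alpha$. Facts (i) and (ii) cited in the preamble already force agreement of the Ray--Knight snapshots of the local time at hitting times $T_N$ and of the one-dimensional marginals at geometric random times, so any obstruction must live in the joint law of the process at two \emph{deterministic} times.

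A natural test functional is the conditional law of the increment $n^{-1/2}(X_{\fl{nt}}-X_{\fl{ns}})$ given $\mathcal F_{\fl{ns}}$, restricted to the event $A_n$ that the walker stays strictly inside its current range $[\min_{i\le\fl{ns}} X_i,\max_{i\le\fl{ns}} X_i]$ throughout $[\fl{ns},\fl{nt}]$. If $X^{(n)}\Rightarrow W_\alpha$, then on $A_n$ (which has asymptotic probability bounded away from $0$ and $1$) this conditional increment should converge to a centered Gaussian of variance $(2\alpha+1)(t-s)$, since $W_\alpha$ is a Brownian motion of diffusivity $2\alpha+1$ away from its extrema. By contrast, the polynomial self-repulsion $w(n)=(n+1)^{-\alpha}$ introduces an equilibrating drift of order $\alpha/k$ at the $k$-th visit to an already-visited bulk site with small crossing imbalance. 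Unlike in the asymptotically free regime of Theorem~\ref{thmAFC}, this per-visit drift is \emph{not summable} in $k$, so its cumulative effect on the bulk increment does not vanish under diffusive scaling.

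To make this quantitative I would combine Tóth's branching-like process representation of the local time profile at time $\fl{ns}$ with a martingale/BDG estimate to compute the first two moments of the bulk increment on $A_n$. The equilibrating drift acts as a restoring force on the microscopic crossings, strictly reducing the martingale part of the conditional variance — much as an Ornstein--Uhlenbeck increment has smaller variance than the Brownian increment over the same macroscopic time. A careful accounting should identify the limiting conditional variance on $A_n$ as some $\sigma^2(s,t,\alpha)$ \emph{strictly smaller} than $(2\alpha+1)(t-s)$, directly contradicting the hypothesised convergence.

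The main obstacle is that Tóth's Ray--Knight theorems describe the local time only at hitting times, while $\fl{nt}$ is deterministic and a non-trivial fraction of the increment is accumulated during the overshoot past the last hitting time before $\fl{nt}$. Following the strategy of \cite{kmpCRW} for excited random walks with Markovian cookies, I would address this with a mesoscopic coarse-graining: partition $[\fl{ns},\fl{nt}]$ into sub-blocks of length $n/k_n$ with $k_n\to\infty$ slowly, short enough that the range is essentially unchanged within a block yet long enough that the equilibrating drift undergoes a law-of-large-numbers inside the block. Summing the resulting variance depletion over the sub-blocks yields the required strict inequality. The non-summability of the $\alpha/k$ drift — the essential difference from the asymptotically free case — is precisely what keeps this depletion of order $1$ after diffusive rescaling, and therefore precisely what breaks the convergence to $W_\alpha$.
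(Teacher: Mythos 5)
Your proposal targets position increments conditionally on a ``bulk'' event, whereas the paper's proof works with \emph{local-time} increments, comparing the second moments of an occupation-time functional $G_{\delta,M}$ under $P^{W_\alpha}$ and $P^{X^{(n)}}$. The paper's key input is Proposition~\ref{lemC1}: the rescaled increments $\mathcal E^{\mathcal T_{N(M+1-c)}}-\mathcal E^{\mathcal T_{N(M+c)}}$ converge to $Z^{(0,0)}$ rather than the $Z^{(\alpha,0)}$ that BMPE would force, and this mismatch (via Lemma~\ref{calc}, $\tfrac53$ vs.\ $1+\tfrac{2}{3(2\alpha+1)}$) is converted into a statement about a bounded continuous functional. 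So your route is genuinely different, and in principle either could work: what matters is a provable mismatch in a bivariate statistic, not the sign of that mismatch.

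The problem is that the crux of your argument --- that the conditional variance on $A_n$ is \emph{strictly smaller} than $(2\alpha+1)(t-s)$ --- is asserted from an OU heuristic, not established, and the heuristic is on shaky ground. The restoring drift you invoke acts on the crossing discrepancy $\D$ at a site, not on the position, and the transfer from one to the other is exactly what is nonobvious. In fact the paper's own Proposition~\ref{VarDtmDtn} cuts against your intuition: while $\Var(\D_{\tau_n^\B})\sim \tfrac{2n}{2\alpha+1}$ does show the $(2\alpha+1)^{-1}$ suppression you expect from an OU-type equilibration, the \emph{increment} variance obeys $\Var(\D_{\tau_m^\B}-\D_{\tau_n^\B})\sim 2(m-n)$ with \emph{no} suppression factor --- i.e.\ on the short time scales relevant to an increment the ``restoring force'' has not kicked in at all, precisely the opposite of the depletion you posit. (Also, the per-visit drift is $-\alpha\D_i/i$, and since $\D_i$ is typically of order $\sqrt{i}$, the absolute drift is typically $\alpha/\sqrt i$, not $\alpha/i$; the non-summability you cite is real but the quantity being summed and its role are misdescribed.) You would further need to show that the event $A_n$ and the conditional law you evaluate interact with the Skorokhod topology well enough to extract a contradiction from weak convergence --- the paper spends Lemma~\ref{cont} and Lemma~\ref{c12} on exactly this kind of issue for $G_{\delta,M}$, and it is not a formality. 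Finally, ``mesoscopic coarse-graining as in \cite{kmpCRW}'' is invoked as a black box; in that reference it is a tool for \emph{proving} convergence, and repurposing it to certify a fixed nonzero defect is a substantial undertaking, not a citation. As it stands, the proposal identifies the right phenomenon (a two-time, history-dependent discrepancy invisible to one-shot Ray--Knight) but leaves the central quantitative claim, its sign, and the topological machinery all unproven.
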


\begin{remark}
  The result presented in Theorem~\ref{thmPSR} and its proof remain
  valid for a more general weight function $w$ given by \eqref{w} with
  $p=\kappa=1$ as in \cite{tGRK}.  We have chosen to write out the
  arguments for $w(n)=(n+1)^{-\alpha},\,n\ge 0$, simply for clarity's
  sake and also as our result is a counterexample to the functional
  convergence of rescaled polynomially self-repelling SIRWs to BMPEs.
\end{remark}

Already at this point we would like to say a
few words as to why in the polynomially self-repelling case we must
reject the only natural candidate $W_\alpha$ as a possible weak limit
of $X^{(n)},\,n\ge 0$. A more detailed discussion is given right after
Proposition ~\ref{lemC1}. In Proposition \ref{lemC1} we study the behavior of
{\em increments} of the local time processes associated to $X^{(n)}$
rather than just the behavior of {\em cumulative} local times which
appear in generalized Ray--Knight theorems. We discover that these
increments scale with a different constant than those of $W_\alpha$
and this violates the {\em additive property} of the limiting local
time processes which is inherent to BMPEs. Since our observations are
in terms of local time processes which are not continuous path
functionals, the proof of this result is rather technical: we have to
approximate local times with occupation times over small intervals and
then take an appropriate subsequence along which the convergence
fails.

\begin{remark}[and an open problem] For the polynomially self-repelling case the
  question of whether $X^{(n)}(t)\Longrightarrow W_\alpha(t)$ for a fixed
  $t>0$ unfortunately remains open.  Provided that one dimensional
  distributions converge, one would also like to study weak
  convergence at the process level and find a viable candidate for a
  weak process limit of $(X^{(n)})_{n\ge 1}$. The last task is
  particularly interesting in view of Theorem~\ref{thmPSR}.
\end{remark}

\subsection{Organization of the paper}

The proof of Theorem~\ref{thmAFC}, except for derivations of several
technical results, is given Section~\ref{two}. In Section~\ref{three}
we prove Theorem~\ref{thmPSR} modulo a key Proposition~\ref{lemC1}
which we state and discuss in Section~\ref{three} but prove only at
the end of Section~\ref{sec:Polya} after a thorough treatment of the
generalized P\'olya urn model of left and right jumps from a single
site. More specifically, Section~\ref{sec:Polya} studies the process
of ``discrepancies'' between the numbers of left and right jumps from
one site and lays a foundation for the technical results of the
previous two sections. Appendix~\ref{app:blp} concerns
  generalized Ray--Knight theorems for SIRWs where we define
  ``branching-like processes'' and adapt as necessary some of the
  results previously obtained in \cite{tGRK}. These results are used
  in Appendix~\ref{app:afc} which contains proofs of
  Proposition~\ref{minmaxtight} and Lemma~\ref{lem:smalllt} from
  Section~\ref{two}. Proofs of two lemmas for the polynomially
  self-repelling case and an auxiliary Lemma~\ref{calc} are given in
  Appendix~\ref{app:psr}.

\subsection{Notation}
Below we gather some of the notation used throughout the paper. 

\begin{itemize}
\item[BESQ$^\delta$] - the square of a Bessel process of generalized dimension
  $\delta\in \R$ (\cite[Definition (1.1), Chapter XI]{ryCMBM} and
\cite[(28)]{gySGBP}). For $\alpha\ge 0$
and $\delta\in\R$ we denote by $Z^{(\alpha,\delta)}$ a BESQ$^\delta$
process divided by $2(2\alpha+1)$. When $\alpha=0$ ({\em
  asymptotically free} case) we shall drop it from the notation and
write simply $Z^{(\delta)}$.
\item[BLP] - branching-like process (Appendix~\ref{app:blp}).
\item[BMPE] - Brownian motion perturbed at its extrema (Definition~\ref{bmpe}).
\item[SIRW] - self-interacting random walk.
\end{itemize}
Weak convergence (denoted by
$\Longrightarrow$) and tightness of stochastic processes in $D([0,T])$
or $D([0,\infty))$ will always be understood with respect to the
standard (i.e., $J_1$) Skorokhod topology.

For a generic stochastic process $(Y_t)_{t\ge 0}$ indexed either by
$t\in\N_0:=\N\cup\{0\}$ or $t\in[0,\infty)$ and $x\in\R$ we define
\begin{equation}
  \label{ent}
  \tau^Y_x:=\inf\{t>0:\,Y_t\ge x\}\quad\text{and}\quad \sigma^Y_x:=\inf\{t>0:\,Y_t\le x\}\quad \text{where } \inf\emptyset :=\infty.
\end{equation}
$P_y^Y$ and $E_y^Y$ will be used to indicate that $Y(0)=y$.

We denote by ${\mathcal L}(x,n)=\sum_{i=0}^n\ind{X_i=x}$ the number of
visits to $x$ by the random walk by time $n$.
The number of upcrossings (resp.\ downcrossings) by  the random walk
of the directed bond $(x,x+1)$, $x\in\Z$, (resp.\ $(x,x-1)$) up to
time $i\in\N$ will be denoted by ${\mathcal E}^i(x)$ (resp.\
${\mathcal D}^i(x)$), i.e.,
\begin{equation}
  \label{upcr}
    {\mathcal E}^i(x)=\sum_{j=0}^{i-1}\ind{X_j=x, X_{j+1}=x+1} ;\quad{\mathcal D}^i(x)=\sum_{j=0}^{i-1} \ind{X_j=x, X_{j+1}=x-1},
\end{equation}
where $\indf{A}$ is the indicator function of set $A$.

Given two probability measures $P_1$ and $P_2$ on the Borel
sigma-field ${\mathcal B}$ of the Skorokhod metric space $(D([0,1],d^\circ)$ (\cite[(12.16)]{bCOPM}), we denote by
  $\text{dist}\,(P_1,P_2)$ the Prokhorov distance between $P_1$ and
  $P_2$(\cite[p.\,72]{bCOPM}):
\[\text{dist}\,(P_1,P_2)=\inf\{\epsilon>0: \forall A\in{\mathcal B},\,
  P_1(A)\le P_2(A^\epsilon)+\epsilon\ \text{and } P_2(A)\le
  P_1(A^\epsilon)+\epsilon\},\] where
$A^\epsilon=\{\omega\in D([0,1]):\  \exists\,\tilde{\omega}\in A,\ d^{\circ}(\omega,\tilde{\omega})<\epsilon\}$. Recall that if $\text{dist}\,(P_N,P)\to 0$ then $P_N\Longrightarrow P$ as $N\to\infty$. In particular, for every $\epsilon>0$ and every continuous bounded functional $F$ on $D([0,1])$ there is an $\eta=\eta(\epsilon,F,P)>0$ such that 
\begin{equation}
  \label{eta}
  \text{dist}\,(P_N,P)<\eta\ \ \text{implies }\ \ \left|\int F\,dP_N-\int F\,dP\right|<\epsilon.
\end{equation}

\section{Asymptotically free case: proof of Theorem~\ref{thmAFC}}\label{two}

For the proof of Theorem \ref{thmAFC} we will decompose the random
walk as $X_n = M_n + \Gamma_n$, where
\begin{equation}\label{Gndef}
  \Gamma_0=0,\quad\Gamma_n = \sum_{i=0}^{n-1} E\left[ X_{i+1}-X_i \mid \mathcal{F}^X_i \right], \quad \mathcal{F}^X_i = \sigma(X_j, \, j\leq i),\quad  n\in\N. 
\end{equation}
Note that with this choice of $\Gamma_n$, the process  $M_n = X_n - \Gamma_n$, $n\in\N_0$, is a martingale with respect to the filtration $\mathcal{F}^X_n$, $n\in\N_0$. The key idea in the proof of Theorem \ref{thmAFC} is that the martingale term in this decomposition will converge to a Brownian motion (Lemma \ref{martpart}) while $\Gamma_n$, which accounts for the accumulated drift experienced by the walk up to time $n$, will be approximated by 
a linear combination of the running minimum and maximum of the walk. 

A similar strategy as the one we employ here has  been used previously  for other self-interacting random walks (see \cite{dCLTERW,dkSLRERW,kpERWPCS,HLSH18}). Since the proofs of a few of the technical results that we need are quite similar to those in the existing literature, we will state  these technical results here and give their proofs in Appendix \ref{app:afc}. 

\medskip

\noindent(i) \textbf{Process level tightness of extrema.}
We will denote the running minimum and running maximum of the random walk $X$ by 
\[
 I_n^X = \min_{k\leq n} X_k \quad\text{and}\quad S_n^X = \max_{k\leq n} X_k. 
\]
It follows easily from the generalized Ray--Knight theorems of T\'oth
in \cite{tGRK} that $S_n^X/\sqrt{n}$ and $I_n^X/\sqrt{n}$ converge in
distribution.  However, for our proof of Theorem \ref{thmAFC} we will
need the following slightly stronger statement - process level
tightness of the running minimum and maximum.
\begin{proposition}\label{minmaxtight}
 Let $\mathcal{S}^X_n(t) = \frac{S^X_{\fl{nt}}}{\sqrt{n}}$ and $\mathcal{I}^X_n(t) = \frac{I^X_{\fl{nt}}}{\sqrt{n}}$. Both $\{\mathcal{S}^X_n\}_{n\geq 0}$ and $\{\mathcal{I}^X_n\}_{n\geq 0}$ are tight in $D([0,\infty))$.
\end{proposition}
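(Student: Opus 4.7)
The plan is to reduce tightness in $D([0,\infty))$ to tightness in each $D([0,T])$ by the standard restriction argument, and to handle $\{\mathcal{I}^X_n\}$ via an obvious symmetry: the process $(-X_i)_{i\ge 0}$ is a SIRW with the same weight function $w$ (with the roles of $\ell$ and $r$ swapped in~\eqref{sirw}), and the running minimum of $X$ becomes the running maximum of $-X$. Fix $T>0$. Since $\mathcal{S}^X_n$ is non-decreasing cadlag, the Skorokhod modulus of continuity is controlled by the largest positive increment, so tightness in $D([0,T])$ reduces to two conditions: (a) $\{\mathcal{S}^X_n(T)\}_n$ is tight in $\R$; and (b) for every $\epsilon > 0$,
\[
\lim_{\delta \downarrow 0}\ \limsup_{n\to\infty}\ P\Big(\sup_{0 \le t \le T - \delta} \big(\mathcal{S}^X_n(t+\delta) - \mathcal{S}^X_n(t)\big) > \epsilon\Big) = 0.
\]

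For (a), introduce the hitting time $T_m := \inf\{k \ge 0 : X_k = m\}$, so that $\{\mathcal{S}^X_n(T) \ge a\} = \{T_{\lceil \sqrt{n}\,a\rceil} \le nT\}$. T\'oth's generalized Ray--Knight theorems (in the form extracted in Appendix~\ref{app:blp}) give convergence in distribution of $T_{\lceil \sqrt n\, a\rceil}/n$ for each $a > 0$ to an almost surely positive limit $\tau_a$, obtained as an integral of a BESQ-type local-time profile, with $\tau_a \to \infty$ in probability as $a \to \infty$. Hence $\sup_n P(\mathcal{S}^X_n(T) > a) \to 0$ as $a \to \infty$.

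For (b), the event that some such increment exceeds $\epsilon$ forces the walk to push its running maximum forward by at least $\lceil \epsilon\sqrt{n}\rceil$ new sites during a time window of length at most $\delta n$, so $T_{m + \lceil \epsilon\sqrt{n}\rceil} - T_m \le \delta n$ for some integer $m$ with $m \le \sqrt{n}\,\mathcal{S}^X_n(T)$. Restricting to the high-probability event $\{\mathcal{S}^X_n(T) \le M\}$ provided by (a), I cover $[0, M\sqrt n]$ by $O(M/\epsilon)$ checkpoints $m_j := j \lceil \epsilon\sqrt n / 2\rceil$; any bad $m$ lies between two consecutive checkpoints, so $T_{m_{j+2}} - T_{m_j} \le \delta n$ must hold for some $j$. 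It then suffices to show that, for each $j$, the rescaled excursion time $(T_{m_{j+2}} - T_{m_j})/n$ is bounded below by a positive constant times $\epsilon$ with probability tending to $1$ as $n\to\infty$, after which a union bound over the $O(M/\epsilon)$ checkpoints and a sufficiently small choice of $\delta$ deliver (b).

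The main obstacle is that at time $T_{m_j}$ the SIRW is \emph{not} restarted from a pristine configuration: the edge-crossing profile below $m_j$ is non-trivial and may be of order $\sqrt n$, so T\'oth's Ray--Knight theorems do not apply as a black box to the excursion $[T_{m_j}, T_{m_{j+2}}]$. The resolution is to run T\'oth's branching-like-process representation for this excursion with arbitrary non-negative initial data for the downward BLP (the upward BLP starts from $0$, since sites above $m_j$ are unvisited) and to show that the integrated local time over $[m_j, m_{j+2}]$ is of order $\epsilon n$ with high probability, uniformly in that initial data. Establishing this robust lower bound is the content of Appendix~\ref{app:blp}, which Proposition~\ref{minmaxtight} then invokes.
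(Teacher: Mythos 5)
Your overall scaffolding (reduction to $[0,T]$, symmetry between $\mathcal{S}^X_n$ and $\mathcal{I}^X_n$, using monotonicity of the running extremum so that the modulus of continuity is governed by a single large increment, and then spatial checkpoints plus a union bound controlled via T\'oth's BLP picture) is sound, and your step (a) is fine. The genuine gap is in the step you flag as the ``main obstacle.'' You claim that at time $T_{m_j}$ the walk is not in a pristine configuration and that one must therefore run the BLP representation ``with arbitrary non-negative initial data for the downward BLP'' and prove a lower bound \emph{uniform} in that data, deferring this to Appendix~\ref{app:blp}. That appendix contains no such uniform statement, and no such statement is used in the paper; the proof of Proposition~\ref{minmaxtight} is actually in Appendix~\ref{app:afc}. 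The observation you are missing is that one should read the edge local time profile off \emph{at the first hitting time of the new checkpoint}, not run an excursion forward from the old one. Concretely (writing it for $\mathcal{I}^X_n$ as the paper does): at $\tau_{z,1}$, the first hitting time of a level $z<0$ below the current minimum, the process $(\mathcal{E}^{\tau_{z,1}}(x))_{z\le x<0}$ is exactly the BLP $\zeta$ started from $\mathcal{E}^{\tau_{z,1}}(z)=0$ by~\eqref{zetaE}, and $2\sum_{x=z+1}^{z+\fl{\epsilon\sqrt n}}\mathcal{E}^{\tau_{z,1}}(x)$ bounds the traversal time from below. The history above $z$ is irrelevant to this process. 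By the symmetry $X\mapsto -X$ the identical statement holds for $\mathcal{S}^X_n$ via the downcrossing process read off at the first hitting time of the new maximum. Consequently every per-interval probability equals the same $P_0^\zeta$-probability, and no uniformity over initial configurations is ever needed.

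Two smaller points. First, the paper does not separate one-point tightness from the oscillation bound: it bounds the number of candidate spatial intervals directly by $\lceil t/\delta\rceil$ via pigeonhole (a minimum decrease across more than $\lceil t/\delta\rceil$ intervals by time $nt$ forces one of them to be traversed in under $\delta n$ steps), which makes the number of terms in the union bound $\delta$-dependent; it then needs the superpolynomial tail bound $P\bigl(\int_0^1 Z^{(2-2\gamma)}(t)\,dt<x\bigr)\le Ce^{-cx^{-1/2}}$ from Appendix~\ref{app:afc} so that the product still vanishes as $\delta\to 0$. Your bookkeeping — a $\delta$-independent number $O(M/\epsilon)$ of checkpoints after conditioning on $\mathcal{S}^X_n(T)\le M$ — would also work once the per-interval probability is shown to vanish as $\delta\to 0$. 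Second, your reduction has an indexing slip: a bad $m\in[m_j,m_{j+1}]$ with $T_{m+\lceil\epsilon\sqrt n\rceil}-T_m\le\delta n$ yields $T_{m_{j+2}}-T_{m_{j+1}}\le\delta n$ (since $T_{m_{j+1}}\ge T_m$ and $T_{m_{j+2}}\le T_{m+\lceil\epsilon\sqrt n\rceil}$), not $T_{m_{j+2}}-T_{m_j}\le\delta n$; the latter need not hold because $T_{m_j}\le T_m$.
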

The proof of Proposition \ref{minmaxtight} is similar to the one for
excited random walks in \cite[Corollary 3.3]{kpERWPCS} (albeit
somewhat simpler) and is given in Appendix \ref{app:afc}.

\medskip

\noindent(ii) \textbf{Control of the number of rarely visited sites.}
Since the random walk $X$ is recurrent, and
since we expect a limiting distribution with diffusive scaling, it is
natural that most sites in the range of the
walk up to time $n$ should have been visited on the order of
$\sqrt{n}$ times. For our proof of Theorem \ref{thmAFC} we will need
to show that there are not too many sites in the range which have been
visited much  very few times. In
particular, we will need the following lemma.
\begin{lemma}\label{lem:smalllt}
Let $w(\cdot)$ be as in \eqref{w} with $\alpha=0$,
and let $\gamma_+ = \gamma \vee 0$ where $\gamma$ is defined in \eqref{v-u}.
Then for any $M>0$ and any $b > \frac{\gamma_+}{2}$ we have 
\[
\lim_{n\to\infty}  P\left( \sup_{k\leq nt} \sum_{x \in [I_{k-1}^X, S_{k-1}^X]} \ind{\mathcal{L}(x,k-1) \leq M} \geq 4 n^{b} \right) = 0, \qquad \forall M>0.  
\]
\end{lemma}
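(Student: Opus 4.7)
The plan is to reduce the supremum to a count at the last record-setting time via a monotonicity argument, and then to use the branching-like process (BLP) machinery of Appendix~\ref{app:blp} to control the number of ``tip'' sites of low local time near each running extremum. Throughout, set $C_k := \sum_{x \in [I^X_{k-1},S^X_{k-1}]} \ind{\mathcal{L}(x,k-1) \le M}$, so that the quantity to estimate is $P(\sup_{k \le nt} C_k \ge 4 n^b)$.

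First I would use Proposition~\ref{minmaxtight} to choose $A = A(\varepsilon,t)$ so that the event $\mathcal{A}_n := \{[I^X_{\fl{nt}}, S^X_{\fl{nt}}] \subseteq [-A\sqrt n, A\sqrt n]\}$ has probability at least $1-\varepsilon$; on $\mathcal{A}_n$, every site contributing to any $C_k$ with $k \le \fl{nt}$ lies in an interval of length at most $2A\sqrt n$. Since $\mathcal{L}(x,\cdot)$ is non-decreasing, a site leaves the indicator set permanently once $\mathcal{L}(x,\cdot)>M$, so $C_k$ can grow only when the walk sets a new record (max or min), and hence $\sup_{k\le nt} C_k$ is attained at some such record time. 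At such a time $k$, the contributions to $C_k$ split into a tip near the current max $S^X_k$, a symmetric tip near the current min $I^X_k$, and an interior contribution that must be shown to be negligible; all three are captured by two BLPs read inward from $S^X_k$ and $I^X_k$ respectively.

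The core input is the following BLP tail estimate, which is where the exponent $\gamma_+$ enters. For an integer level $N\ge 1$, the sequence $(\mathcal{L}(N-j, T^X_N))_{j\ge 0}$ read inward from $N$ at the hitting time $T^X_N$ is, after a standard reindexing, a BLP in the sense of Appendix~\ref{app:blp} corresponding to the asymptotically free weight $w$. The estimate I would prove is that for every $M,\varepsilon>0$ there is $K=K(M,\varepsilon)$ such that
\begin{equation*}
  P\left( \bigl|\{j \ge 0 : \mathcal{L}(N-j, T^X_N) \le M\}\bigr| > K N^{\gamma_+} \right) < \varepsilon
\end{equation*}
uniformly in $1\le N\le \lceil A\sqrt n\rceil$, together with the obvious symmetric statement on the minimum side. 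The exponent $\gamma_+$ is dictated by the diffusion approximation of the BLP: the BLP converges after scaling to a squared Bessel-type process whose dimension is determined by $\gamma$, and whose occupation measure of a vanishing neighborhood of $0$ scales like $N^{\gamma_+}$ on the diffusive length scale (of order $1$ when $\gamma\le 0$, of order $N^{\gamma}$ when $\gamma>0$).

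Finally I would take a union bound over all record levels encountered during $[0,\fl{nt}]$. On $\mathcal{A}_n$ there are at most $2A\sqrt n+1$ such levels, and the two tip counts at the sup-attaining record time are each bounded by the BLP estimate with $N\le \lceil A\sqrt n\rceil$, yielding $\sup_{k\le nt} C_k \le 2K(A\sqrt n)^{\gamma_+}$ outside an event of probability $O(\varepsilon)$; this is less than $4n^b$ for any fixed $b>\gamma_+/2$ once $n$ is large enough. Negligibility of the interior contribution is a routine byproduct of the same BLP estimate, since an interior site that is far from both current extrema has been crossed many times by the walk. The principal obstacle is the BLP tail estimate itself: a quantitative bound on how long the BLP can remain below a fixed threshold starting from $0$ in the asymptotically free regime, which I expect to establish by combining the moment and scaling estimates of \cite{tGRK} with the extensions to $p\in(1/2,1]$ developed in Appendix~\ref{app:blp}.
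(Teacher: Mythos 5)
Your reduction of $\sup_{k\le nt} C_k$ to record times (via the monotonicity of $\mathcal{L}(x,\cdot)$) is a correct and clean observation, and the exponent you target, $N^{\gamma_+}$ over a diffusive length $N\lesssim\sqrt n$, is the right one. Nevertheless there is a gap that stops the argument as stated, and it is precisely the point that makes the paper's own proof delicate.

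You propose the BLP tail estimate
$P\bigl(|\{j : \mathcal{L}(N-j,T^X_N)\le M\}|>KN^{\gamma_+}\bigr)<\varepsilon$, with $K=K(M,\varepsilon)$ fixed and the bound uniform over $N\le \lceil A\sqrt n\rceil$. You then take a union bound over the $O(\sqrt n)$ record levels encountered. But a union bound over $O(\sqrt n)$ events each of probability $<\varepsilon$ gives a failure probability $O(\sqrt n\,\varepsilon)$, not $O(\varepsilon)$; one cannot remedy this by shrinking $\varepsilon$ with $n$ because then $K=K(M,\varepsilon)$ may grow and the deterministic inequality $2K(A\sqrt n)^{\gamma_+}<4n^{b}$ is lost. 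This is exactly why the paper's Lemma~\ref{lem:zetasmall} proves a \emph{stretched-exponential} tail $C e^{-cn^{r}}$ for the count of low values of the (non-absorbing) BLP $\zeta$ — strong enough to survive a union bound over the $O(n^{5/2})$ triples $(z,m,k)$ used there, let alone over $O(\sqrt n)$ record levels. To make your route work you would need the same strengthening: a slightly larger power $n^{b}$ with $b>\gamma_+/2$ in place of $KN^{\gamma_+}$, paid for by an exponentially small exceptional probability. That strengthening rests on the discrete-level excursion/return analysis in Lemmas~\ref{lem:sztail} and~\ref{lem:noreturn}; it does not follow from the diffusion approximation alone, since the limiting BESQ process does not hit $0$ for $\gamma<1$ while the discrete $\zeta$ does, and the heuristic ``occupation measure of a vanishing neighborhood of $0$'' glosses over exactly this mismatch of scales.

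Two further points you would need to fill in. First, the ``BLP read inward from $S^X_k$'' is not one of the two chains defined in Appendix~\ref{app:blp}: those are $\zeta$ and $\tilde\zeta$ read outward from $\tau_{z,m}$, with explicit kernels~\eqref{bblp}–\eqref{fblp}. The inward sequence $(\mathcal{E}^{T^X_N}(N-j))_{j\ge1}$ is a Markov chain, but via the identity $\mathcal{E}^{T^X_N}(x)=\mathcal{D}^{T^X_N}(x+1)+1$ (valid only for $0\le x<N$) its transitions involve $\mathfrak{B}^+_{\tau_i^{\mathfrak{R}^+}}+1$ rather than the kernels used for $\zeta$ or $\tilde\zeta$; it also never reaches $0$ on $[0,N)$, and it changes character across the origin. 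So one must derive the diffusion approximation and excursion tail for this new chain — it is not an off-the-shelf appeal to Appendix~\ref{app:blp}. Second, the parenthetical justification that interior sites ``have been crossed many times'' is not a valid deterministic claim (the $\zeta$-type chain can and does dip back near $0$ in the interior); the interior is controlled only through the very count you are estimating, so you cannot invoke it as independent reasoning. The paper avoids both issues by working with the outward chains at arbitrary $\tau_{z,m}$, splitting via $\mathcal{L}(x,k-1)\ge\max\{\mathcal{D}^k(x),\mathcal{E}^k(x)\}$ into the non-absorbing $\zeta$ side (handled by Lemma~\ref{lem:zetasmall}) and the absorbing $\tilde\zeta$ side (which has an exponential tail via~\eqref{tz-slt}).
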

The proof of Lemma \ref{lem:smalllt} is an adaptation of proofs in
\cite{kmLLCRW,kpERWPCS,kmpCRW} and is given in Appendix \ref{app:afc}.

\medskip

A key point in our approximation of the
accumulated drift term by a linear combination of $S^X_n$ and $I^X_n$
(Lemma \ref{driftpart} below) is that for each site $x\in \Z$, there
is a random but finite accumulated drift from all visits to $x$:
\begin{equation}\label{dxdef}
 \delta_x 
 = \sum_{i=0}^\infty E\left[ X_{i+1}-X_i \mid \mathcal{F}^X_i \right] \ind{X_i=x}, \qquad x \in \Z.  
\end{equation}
It is not a priori obvious that the sum in \eqref{dxdef}
converges. However, the following lemmas show that
under the proper assumptions on the weight function $w$ the
series not only converges absolutely,
a.s., but also has finite moments of
all orders. Moreover, $E[\delta_x]$ can be explicitly
calculated.
\begin{lemma}\label{lem:bdxfin}
 Let the weight function $w$ be as in \eqref{w} with $\alpha=0$, $p \in (1/2,1]$, and $\kappa>0$. 
If 
\[
 \bar\delta_x 
 = \sum_{i=0}^\infty \left| E\left[ X_{i+1}-X_i \mid \mathcal{F}^X_i \right] \right| \ind{X_i=x},
\]
then $E[(\bar\delta_x)^M] < \infty$ for all $M>0$ and $x\in \Z$. 
In particular, this implies that the sum in \eqref{dxdef} converges, $P$-a.s., for all $x\in \Z$, and $E[|\delta_x|^M] < \infty$ for all $x\in\Z$ and $M>0$.
\end{lemma}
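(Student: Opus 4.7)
The plan is to reduce $\bar\delta_x$ to a sum, over visits to $x$, of the absolute conditional drifts at $x$, and then to bound moments of this sum by exploiting the generalized P\'olya urn structure of the directions of successive departures from $x$. Let $R_{k-1}$ and $L_{k-1}$ count the right- and left-departures from $x$ among the first $k-1$ departures, and set $\Delta_{k-1}:=R_{k-1}-L_{k-1}$. By the nearest-neighbor return constraint -- every excursion from $x$ returns from the side it left on -- the edge counts at the time of the $k$-th visit to $x$ satisfy $(r_x,\ell_x)=(2R_{k-1},2L_{k-1})$ up to an $O(1)$ boundary correction, so the conditional drift there is $d_k:=(w(r_x)-w(\ell_x))/(w(r_x)+w(\ell_x))$, and $\bar\delta_x\le \sum_{k\ge 1}|d_k|$.

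Inverting \eqref{w} with $\alpha=0$ gives $w(n)=1-2^p B n^{-p}+O(n^{-(1+\kappa)\wedge 2p})$. A mean-value estimate on the ``balanced'' event $\{R_{k-1}\wedge L_{k-1}\ge k/4\}$ then yields $|d_k|\le C|\Delta_{k-1}|/k^{p+1}+O(k^{-(1+\kappa)\wedge 2p})$, while on the complementary ``unbalanced'' event I use the trivial bound $|d_k|\le 1$. The generalized P\'olya urn analysis developed in Section~\ref{sec:Polya}, applied to the recursion $\Delta_k=\Delta_{k-1}+\xi_k$ with $\xi_k\in\{\pm 1\}$ and $\E[\xi_k\mid \mathcal{F}_{k-1}]=d_k$, gives uniform moment bounds $\|\Delta_k\|_M\le C_M \sqrt{k}$ for every $M$. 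In particular the unbalanced event has probability $O(k^{-N})$ for every $N$, so its contribution to any $L^M$ norm of $d_k$ is negligible.

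By Minkowski,
\[
\|\bar\delta_x\|_M \;\le\; \sum_k \| d_k \|_M \;\le\; C\sum_k k^{-(p+1)}\|\Delta_{k-1}\|_M + (\text{rapidly decaying remainders}) \;\le\; C'\sum_k k^{-p-1/2} \;<\; \infty,
\]
since $p>1/2$ and $\kappa>0$; the a.s.\ absolute convergence of \eqref{dxdef} and $\E[|\delta_x|^M]<\infty$ then follow at once. The principal obstacle is establishing the uniform $\sqrt{k}$ moment bound on $\Delta_k$: the dependence of $d_j$ on $\Delta_{j-1}$ makes a naive induction circular, so I would resort to a bootstrap -- for example, introducing the stopping time $\inf\{k:|\Delta_k|>C\sqrt{k+1}\}$ and applying Burkholder's inequality to the martingale $\Delta_k-\sum_{j\le k}d_j$ on the truncated process, or inductively tracking even moments via the recursion $\E[\Delta_k^{2m}\mid\mathcal{F}_{k-1}]=\Delta_{k-1}^{2m}+\binom{2m}{2}\Delta_{k-1}^{2m-2}+2m\,\Delta_{k-1}^{2m-1}d_k+\ldots$ while exploiting $\sum_j j^{-(p+1)}<\infty$ for $p>0$. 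Both routes fit naturally into the P\'olya urn framework of Section~\ref{sec:Polya}.
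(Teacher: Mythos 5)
Your proposal is correct, and it arrives at the conclusion by a route that is genuinely, if mildly, different from the paper's. The paper first passes to the sum $\sum_n |1/w(2\mathfrak{R}_n) - 1/w(2\mathfrak{B}_n+1)|$ (using $\alpha=0$ to bound the normalizing denominator away from $0$), reindexes that sum by blue draws $\tau_i^\mathfrak{B}$, and then shows the tail $P(\cdot \geq 4C_0 m)$ decays faster than any polynomial by working on a high-probability event on which $|\D_{\tau_i^\mathfrak{B}}|\le (\log i)\sqrt i$ and $\tau^\mathfrak{B}_{i+1}-\tau^\mathfrak{B}_i\le i^{\kappa'}$ for all $i\ge m$. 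You instead bound the $L^M$ norm of each term $|d_k|$ and sum via Minkowski, which is arguably cleaner: you avoid the reindexing by $\tau_i^\mathfrak{B}$ and the geometric-domination step controlling $\tau^\mathfrak{B}_{i+1}-\tau^\mathfrak{B}_i$. Both routes hinge on exactly the same concentration input, and on the condition $p>1/2$ via the convergence of $\sum_k k^{-p-1/2}$. The one place where you flag a gap is actually already closed by the paper: the moment bound $\|\Delta_{k}\|_M\lesssim\sqrt{k}$ is a direct consequence of Lemma~\ref{Lemma1} together with \eqref{stn} in Remark~\ref{rem:tautail}, and Lemma~\ref{Lemma1} is proved via Rubin's exponential-clock construction (not via any induction in $k$), which sidesteps the circularity you worry about. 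So there is no need for the Burkholder/stopping-time bootstrap or the even-moment recursion you propose as fallbacks; you can simply cite Lemma~\ref{Lemma1} and Remark~\ref{rem:tautail} at that step and your argument closes.
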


\begin{lemma}\label{lem:Edx}
 Let the weight function $w$ be as in \eqref{w} with $\alpha=0$, $p \in (1/2,1]$, and $\kappa>0$.
Then 
\[
 E[\delta_x] = \sgn(x)\,\gamma,
\]
where the constant $\gamma$ is as defined in \eqref{v-u} and
$\sgn\,(0):=0$.
\end{lemma}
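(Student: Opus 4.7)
By the reflection symmetry $X\mapsto -X$, which preserves the law of the walk while negating the conditional drift at each visit, $\delta_{-x}\stackrel{d}{=}-\delta_x$; in particular $E[\delta_0]=0$, and it suffices to prove $E[\delta_x]=\gamma$ for $x>0$.

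Fix $x>0$, let $T_1<T_2<\ldots$ be the successive visits to $x$, and set $(R_k,L_k):=(r_x^{T_k},\ell_x^{T_k})$. Because the walk is nearest-neighbor and a.s.\ returns to $x$ from whichever side it departed, $(R_k,L_k)$ starts at $(0,1)$ and evolves as a generalized P\'olya urn, transitioning to $(R_k+2,L_k)$ with probability $w(R_k)/(w(R_k)+w(L_k))$ and to $(R_k,L_k+2)$ otherwise. With $\xi_k:=X_{T_k+1}-X_{T_k}\in\{\pm1\}$ and $d_k:=E[\xi_k\mid\mathcal F^X_{T_k}]=(w(R_k)-w(L_k))/(w(R_k)+w(L_k))$, one has $\delta_x=\sum_k d_k$, and Lemma~\ref{lem:bdxfin} gives $\sum_k|d_k|\in L^1$; by Fubini, $E[\delta_x]=\sum_{k\ge 1}E[d_k]$.

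Embed the urn in continuous time via independent families $(e_j^R)$, $(e_j^L)$ with $e_j^R\sim\mathrm{Exp}(w(2j))$ and $e_j^L\sim\mathrm{Exp}(w(2j+1))$, partial sums $T_k^R$, $T_k^L$, counting processes $N_R(t),N_L(t)$, and merged-order-statistic times $\tau_k$. The $R/L$-labels in the merged sequence have the same law as $(\xi_k)$. The exponential-sojourn identity $E[\tau_k-\tau_{k-1}\mid R_k,L_k]=(w(R_k)+w(L_k))^{-1}$ gives $E[d_k]=E[(w(R_k)-w(L_k))(\tau_k-\tau_{k-1})]=E[\int_{\tau_{k-1}}^{\tau_k}(w(R(s))-w(L(s)))\,ds]$; summing and applying dominated convergence (with the $L^1$-bound of Lemma~\ref{lem:bdxfin}) yields
\[
E[\delta_x]\;=\;E\!\left[\int_0^\infty\!(w(R(s))-w(L(s)))\,ds\right].
\]
Since $w$ is bounded, Doob's compensator identity makes $N_R(t)-\int_0^t w(R(s))\,ds$ and $N_L(t)-\int_0^t w(L(s))\,ds$ true mean-zero martingales, so
\[
E\!\left[\int_0^t(w(R)-w(L))\,ds\right]\;=\;E[N_R(t)]-E[N_L(t)],
\]
and taking $t\to\infty$ (with dominated convergence) reduces the claim to showing $\lim_{t\to\infty}\bigl(E[N_R(t)]-E[N_L(t)]\bigr)=\gamma$.

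Since $E[N_R(t)]$ and $E[N_L(t)]$ depend only on the marginal laws of the two exponential families, one may recouple by writing $e_j^R=u_j/w(2j)$ and $e_j^L=u_j/w(2j+1)$ with a single i.i.d.\ family $(u_j)\sim\mathrm{Exp}(1)$. The hypothesis $p>0$ in \eqref{w} makes the coefficients $c_i:=1/w(2i)-1/w(2i+1)$ both absolutely summable with sum $-\gamma$ and square-summable, so by Kolmogorov's three-series theorem
\[
T_k^R-T_k^L\;=\;\sum_{i<k}u_i\,c_i\xrightarrow[k\to\infty]{\text{a.s., }L^2}D_\infty,\qquad E[D_\infty]=-\gamma.
\]
Because each renewal count has asymptotic jump rate one, this asymptotic offset between $T_k^R$ and $T_k^L$ is mirrored by the counting processes, giving $N_R(t)-N_L(t)\to -D_\infty$ in $L^1$, and hence $E[N_R(t)-N_L(t)]\to\gamma$. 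The main technical obstacle is precisely this last transfer: converting the a.s.\ convergence of the interarrival offsets into $L^1$ convergence of the counting difference requires a renewal-style estimate controlling the (random) number of indices $k$ with $T_k^R$ and $T_k^L$ straddling a given $t$. Combined with the opening reflection step, this yields $E[\delta_x]=\sgn(x)\,\gamma$, as claimed.
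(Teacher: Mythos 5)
Your overall strategy is genuinely different from the paper's. The paper's proof first shows $E[\delta_x]=\lim_n E[\D_{\tau_n^{\mathfrak B}}]$ (a Fubini argument you share) and then evaluates the limit via T\'oth's Lemma~1, the Wald-type identity $E[U_1(\mathfrak R_{\tau_n^{\mathfrak B}})]=V_1(n)$, controlling the error term by the tail bounds of Lemma~\ref{Lemma1}; this is where $p>1/2$ enters. You instead embed the urn in continuous time via Rubin's construction, use the compensator martingale to write $E[\delta_x]=\lim_{t\to\infty}\bigl(E[N_R(t)]-E[N_L(t)]\bigr)$, and then try to evaluate the limit via a marginal-preserving recoupling $e^R_j=u_j/w(2j)$, $e^L_j=u_j/w(2j+1)$ together with Kolmogorov's three-series theorem for $D_\infty=\sum_j u_j c_j$. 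All the intermediate steps of this reduction are correct: the urn dynamics at a site $x>0$, the identity $E[d_k]=E[(w(R_k)-w(L_k))(\tau_k-\tau_{k-1})]$, the justification of Fubini via Lemma~\ref{lem:bdxfin}, the use of boundedness of $w$ (valid since $\alpha=0$) to make the compensated counting processes true martingales, and the computations $\sum_j c_j=-\gamma$ and $E[D_\infty]=-\gamma$ all check out.

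The genuine gap is the one you flag yourself: the passage from $T_k^R-T_k^L\to D_\infty$ (a.s.\ and in $L^2$) to $E[N_R(t)-N_L(t)]\to -E[D_\infty]$. This is not a formality. Note that $E[N_R(t)-N_L(t)]=\bigl(t-E[V_1(N_L(t))]\bigr)-\bigl(t-E[U_1(N_R(t))]\bigr)$ is automatically zero by optional stopping applied to the \emph{other} natural martingales $U_1(N_R(t))-t$ and $V_1(N_L(t))-t$, so any route to the limit $\gamma$ must actually control the joint distribution of the two counting processes near time $t$, not just their marginals — that is, one must control $N_R(t)-N_R(t-S)$ with $S=\sup_k|T^R_k-T^L_k|$ a random variable, and show the resulting bound is uniformly integrable in $t$. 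This requires a real renewal-style estimate that you do not supply. It is plausible that the argument can be completed (and, if so, would even weaken the hypothesis to $p>0$, since you only need $\sum_j|c_j|<\infty$ and $\sum_jc_j^2<\infty$), but as written the proof stops short of establishing the key limit, so the lemma is not proved.
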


The proofs of Lemmas \ref{lem:bdxfin} and \ref{lem:Edx} will be given
in Section \ref{sec:drift} where we analyze the sequence of left/right
steps from a fixed site using a generalized P\'olya urn.

\begin{remark}\label{rem:dxind}
  A crucial observation that will be used in the proof of Theorem
  \ref{thmAFC} below, is that the sequences of random variables
  $(\bar\delta_x)_{x\in \Z}$ and $(\delta_x)_{x\in\Z}$ are both
  sequences of independent random variables. This is due to the fact
  that any step of the random walk depends only on the behavior of the
  walk at previous visits to the current location so that the sequence
  of left/right steps from each site can be generated by
  independent realizations of generalized P\'olya urns (see Section
  \ref{sec:Polya}).  Moreover, since for any site $x<0$ (or for any
  $x>0$ respectively) the process of generating the sequence of
  left/right steps from $x$ is the same, it follows that the
  sequences $(\delta_x)_{x\geq 1}$ and $(\delta_x)_{x\leq -1}$ are
  both respectively i.i.d.
\end{remark}

\begin{remark}\label{rem:sym}
  Another fact that we will use is that there is a
  natural symmetry in the behavior of the walk to the right of the
  origin and to the left of the origin. In particular, we will repeatedly
  use that $S_n^X \overset{\text{Law}}{=} - I_n^X$ for all $n\geq 0$,
  $\delta_x \overset{\text{Law}}{=} - \delta_{-x}$  and
  $\bar\delta_x \overset{\text{Law}}{=} \bar\delta_1$ for all
  $x \neq 0$.
\end{remark}

Having collected the main tools we are now ready to move on to the proof of Theorem \ref{thmAFC}.

\subsection{Proof of Theorem~\ref{thmAFC}}
Recall that $M_n=X_n-\Gamma_n$, $n\in\N_0$, is a martingale. The first step in the proof of Theorem \ref{thmAFC} is to show that this martingale converges under diffusive scaling to a standard Brownian motion. 

\begin{lemma}\label{martpart}
  The process $\left(\frac{M_{\fl{nt}}}{\sqrt{n}}\right)_{t\geq 0}$
  converges in distribution to a standard Brownian motion.
\end{lemma}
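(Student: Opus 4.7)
\medskip

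\textbf{Proof proposal.}
The plan is to invoke the functional martingale central limit theorem. The martingale differences $M_{i+1}-M_i = (X_{i+1}-X_i) - \mu_i$, where $\mu_i := E[X_{i+1}-X_i \mid \mathcal{F}^X_i]$, are bounded by $2$, so the rescaled maximum jump $\max_{i\le nt}|M_{i+1}-M_i|/\sqrt{n}$ tends to $0$ and the Lindeberg condition is automatic. The only substantial task is therefore to show that the predictable quadratic variation satisfies
\begin{equation*}
  \frac{1}{n}\langle M\rangle_{\fl{nt}} \xrightarrow{P} t \quad \text{for each } t\ge 0,
\end{equation*}
from which process-level convergence follows because $\langle M\rangle$ is nondecreasing and the limit is continuous.

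Since $(X_{i+1}-X_i)^2=1$ almost surely, the conditional variance equals $1-\mu_i^2$, so
\begin{equation*}
  \langle M\rangle_n = \sum_{i=0}^{n-1}(1-\mu_i^2) = n - \sum_{i=0}^{n-1}\mu_i^2.
\end{equation*}
It therefore suffices to show $n^{-1}\sum_{i=0}^{\fl{nt}-1}\mu_i^2 \to 0$ in probability. Because $|\mu_i|\le 1$, we bound $\mu_i^2 \le |\mu_i|$ and regroup the sum by the current location of the walk: for each $n$,
\begin{equation*}
  \sum_{i=0}^{n-1}\mu_i^2 \;\le\; \sum_{i=0}^{n-1}|\mu_i|\;=\;\sum_{x\in\Z}\sum_{i=0}^{n-1}|\mu_i|\,\ind{X_i=x}\;\le\;\sum_{x\in [I^X_{n-1},\,S^X_{n-1}]}\bar\delta_x,
\end{equation*}
where $\bar\delta_x$ is the total absolute drift accumulated at site $x$ introduced before Lemma~\ref{lem:bdxfin}.

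By Remark~\ref{rem:dxind}, the collections $(\bar\delta_x)_{x\ge 1}$ and $(\bar\delta_x)_{x\le -1}$ are i.i.d., and Lemma~\ref{lem:bdxfin} gives $E[\bar\delta_x]<\infty$. Proposition~\ref{minmaxtight} implies that for every $t>0$ and $\epsilon>0$ there exists $K=K(\epsilon,t)$ so that $P(S^X_{\fl{nt}}-I^X_{\fl{nt}}\le K\sqrt{n})>1-\epsilon$ for all $n$. On this event the strong law of large numbers applied to each half-line yields
\begin{equation*}
  \sum_{x\in[I^X_{\fl{nt}},\,S^X_{\fl{nt}}]}\bar\delta_x \;\le\; \sum_{|x|\le \cl{K\sqrt n}}\bar\delta_x \;=\; O(\sqrt n) \quad \text{a.s.,}
\end{equation*}
which is $o(n)$. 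Dividing by $n$ and letting $\epsilon\downarrow 0$ gives the desired convergence in probability of $n^{-1}\langle M\rangle_{\fl{nt}}$ to $t$. The martingale FCLT (e.g.\ Theorem 7.1.4 of Ethier--Kurtz) then yields weak convergence of $(M_{\fl{nt}}/\sqrt n)_{t\ge 0}$ to a standard Brownian motion in the Skorokhod topology on $D([0,\infty))$.

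There is no real obstacle here beyond having the two inputs in place; the only mildly subtle point is the passage from pointwise convergence of $\langle M\rangle_{\fl{nt}}/n$ to the process-level hypothesis of the martingale FCLT, which is automatic because $\langle M\rangle$ is monotone and the limit is continuous. The work is genuinely done by Lemma~\ref{lem:bdxfin} (finite mean of $\bar\delta_x$) and Proposition~\ref{minmaxtight} (tightness of the range), both of which are established independently of this lemma.
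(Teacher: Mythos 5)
Your proof is correct and follows essentially the same route as the paper's: reduce by the martingale FCLT to showing $n^{-1}\sum_{i<\fl{nt}}\mu_i^2 \to 0$ in probability, bound this sum by $\sum_{x\in[I^X,S^X]}\bar\delta_x$ via $\mu_i^2 \le |\mu_i|$, and finish using Proposition~\ref{minmaxtight} for tightness of the range together with Lemma~\ref{lem:bdxfin} for $E[\bar\delta_x]<\infty$. The only cosmetic difference is that the paper truncates the range at $n^{3/4}$ and applies Markov's inequality, whereas you truncate at $K\sqrt n$ and invoke the SLLN; both work.
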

\begin{proof}
 Since $M_n$ is a martingale with bounded increments, it is enough (see \cite[Theorem 18.2]{bCOPM}) to show that 
$\lim_{n\to\infty} \frac{1}{n} \sum_{i=0}^{n-1} E\left[(M_{i+1} - M_i)^2 \mid \mathcal{F}^X_i \right] = 1$, in probability. 
We have 
\begin{align*}
 E\left[(M_{i+1} - M_i)^2 \mid \mathcal{F}^X_i \right] &= E[(X_{i+1}-X_i - E[X_{i+1}-X_i \mid \mathcal{F}^X_i ] )^2 \mid \mathcal{F}^X_i ] \\
&= 1 - E[X_{i+1}-X_i \mid \mathcal{F}^X_i ]^2. 
\end{align*}
Therefore, it is enough to prove that 
$\lim_{n\to\infty} \frac{1}{n} \sum_{i=0}^{n-1} E[X_{i+1}-X_i \mid \mathcal{F}^X_i ]^2 = 0$, 
in probability. 
The estimate 
\[
 \sum_{i=0}^{n-1} E[X_{i+1}-X_i \mid \mathcal{F}^X_i ]^2
\leq \sum_{x \in [I_n^X, S_n^X]} \sum_{i=0}^\infty \left| E[X_{i+1}-X_i \mid \mathcal{F}^X_i ] \right| \ind{X_i = x}
= \sum_{x \in [I_n^X, S_n^X]} \bar\delta_{x} 
\]
implies that 
\begin{align*}
& P\left( \frac{1}{n} \sum_{i=0}^{n-1} E[X_{i+1}-X_i \mid \mathcal{F}^X_i ]^2 \geq \epsilon \right) \\
&\qquad \leq P( I_n^X \leq - n^{3/4} ) + P( S_n^X \geq n^{3/4} ) + P\left( \sum_{|x| \leq n^{3/4}} \bar\delta_{x} \geq \epsilon n \right) \\
&\qquad \leq 2P( S_n^X \geq n^{3/4} ) + \frac{E[\bar\delta_{0}] + 2n^{3/4} E[\bar\delta_{1}] }{\epsilon n}, 
\end{align*}
where in the last inequality we used symmetry considerations noted in
Remark \ref{rem:sym}.  By Proposition \ref{minmaxtight} and Lemma
\ref{lem:bdxfin}, the right side vanishes as $n\to\infty$. This
completes the proof.
\end{proof}

\begin{lemma}\label{driftpart}
Let $w$ be as in \eqref{w} with $\alpha = 0$ and $p \in (1/2,1]$. Then, 
\[
\lim_{n\to\infty} P\left( \sup_{k\leq nt} \left| \Gamma_{k} - \gamma( S^X_{k} + I^X_{k}) \right| > \epsilon \sqrt{n} \right) = 0, \quad \forall \epsilon, t>0. 
\]
\end{lemma}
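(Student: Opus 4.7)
The plan is to decompose $\Gamma_k$ site by site as $\sum_x\delta_x^{(k)}$, where $\delta_x^{(k)}:=\sum_{i=0}^{k-1}E[X_{i+1}-X_i\mid\mathcal{F}^X_i]\ind{X_i=x}$ is the partial drift collected at $x$ by time $k$, then approximate each $\delta_x^{(k)}$ by its infinite-time limit $\delta_x$, and exploit the independence in Remark \ref{rem:dxind} together with the identity $E[\delta_x]=\gamma\sgn(x)$ from Lemma \ref{lem:Edx}. Since $\gamma(S^X_k+I^X_k)=\sum_{x\in[I^X_k,S^X_k]\setminus\{0\}}\gamma\sgn(x)$ and $|\delta_0^{(k)}|\le\bar\delta_0=O_\P(1)$ by Lemma \ref{lem:bdxfin}, the symmetry of Remark \ref{rem:sym} reduces the claim to showing
\begin{equation*}
\sup_{k\le nt}\Bigl|\sum_{x=1}^{S^X_k}\bigl(\delta_x^{(k)}-\gamma\bigr)\Bigr|=o_\P(\sqrt n).
\end{equation*}
I will split $\delta_x^{(k)}-\gamma=(\delta_x-\gamma)-(\delta_x-\delta_x^{(k)})$ and treat the two resulting sums separately.

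For the \emph{i.i.d.\ part}, $(\delta_x-\gamma)_{x\ge 1}$ is i.i.d., mean zero, and has finite variance by Lemmas \ref{lem:bdxfin} and \ref{lem:Edx} and Remark \ref{rem:dxind}, so $m\mapsto\sum_{x=1}^m(\delta_x-\gamma)$ is an $L^2$ martingale. Fix $\eta>0$ and use Proposition \ref{minmaxtight} to choose $C=C(\eta,t)$ with $P(S^X_{nt}>C\sqrt n)<\eta$. Since the random upper index $S^X_k$ is nondecreasing in $k$, on the event $\{S^X_{nt}\le C\sqrt n\}$
\begin{equation*}
\sup_{k\le nt}\Bigl|\sum_{x=1}^{S^X_k}(\delta_x-\gamma)\Bigr|\le\max_{m\le \fl{C\sqrt n}}\Bigl|\sum_{x=1}^m(\delta_x-\gamma)\Bigr|,
\end{equation*}
and Doob's $L^2$ inequality makes the right-hand side $O_\P(n^{1/4})=o_\P(\sqrt n)$.

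For the \emph{tail-drift part}, let $d_x^{(j)}$ denote the conditional drift at the $j$-th visit to $x$ and set $R_x(m):=\sum_{j>m}|d_x^{(j)}|$, so that $|\delta_x-\delta_x^{(k)}|\le R_x(\mathcal{L}(x,k-1))$. Given $\epsilon>0$ and the $\eta$ above, fix $M\in\N$ so that $E[R_1(M)]<\epsilon\eta/(2C)$; this is possible because $R_1(M)\downarrow 0$ a.s.\ and is dominated by $\bar\delta_1\in L^1$ (Lemma \ref{lem:bdxfin}), so dominated convergence applies, and the $\delta_x$ are identically distributed for $x\ge 1$. Splitting according as $\mathcal{L}(x,k-1)\ge M$ or $<M$ and using monotonicity of $R_x(\cdot)$,
\begin{equation*}
\sum_{x=1}^{S^X_k}R_x(\mathcal{L}(x,k-1))\le \sum_{x=1}^{S^X_{nt}}R_x(M)\;+\;\bigl|\mathcal{A}_k^M\bigr|\cdot\max_{1\le x\le \fl{C\sqrt n}}\bar\delta_x,
\end{equation*}
where $\mathcal{A}_k^M:=\{x\in[1,S^X_k]:\mathcal{L}(x,k-1)<M\}$. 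On $\{S^X_{nt}\le C\sqrt n\}$, the first summand has mean at most $C\sqrt n\cdot E[R_1(M)]<\epsilon\eta\sqrt n/2$, hence exceeds $\epsilon\sqrt n/2$ with probability less than $\eta$ by Markov's inequality. For the second summand, Lemma \ref{lem:smalllt} fixes $b\in(\gamma_+/2,1/2)$ and gives $|\mathcal{A}_k^M|\le 4n^b$ uniformly in $k\le nt$ with probability tending to $1$; meanwhile the high moments of $\bar\delta_1$ from Lemma \ref{lem:bdxfin} combined with a union bound yield $\max_{1\le x\le C\sqrt n}\bar\delta_x=O_\P(n^\rho)$ for every $\rho>0$, and choosing $\rho<1/2-b$ makes this contribution $o_\P(\sqrt n)$.

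The main obstacle is the uniform-in-$k$ control of the tail-drift sum: the pointwise bound $|\delta_x-\delta_x^{(k)}|\le\bar\delta_x$ summed over the $O_\P(\sqrt n)$ sites in the range only gives $O_\P(\sqrt n)$, not the required $o_\P(\sqrt n)$. The two-scale split above---handling many-visit sites through the $L^1$-smallness of $R_1(M)$ and few-visit sites through the sublinear count of Lemma \ref{lem:smalllt}---is what upgrades the estimate to $o_\P(\sqrt n)$, and it is also the only point where both of the technical inputs in items (i) and (ii) at the start of Section \ref{two} get used simultaneously.
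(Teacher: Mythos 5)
Your proof is correct and follows essentially the same strategy as the paper's: decompose $\Gamma_k$ site by site, replace each accumulated-drift term by its infinite-time limit $\delta_x$ plus a remainder, handle the $\sum(\delta_x - E[\delta_x])$ part via the fluctuation bound for i.i.d.\ sums, and handle the remainder via the same two-scale split (sites visited many times controlled through $L^1$-smallness of the tail drift, sites visited few times controlled through Lemma \ref{lem:smalllt} and moment bounds on $\bar\delta_x$). The only genuine deviation is in the i.i.d.\ part, where you use Doob's $L^2$ maximal inequality to get $O_\P(n^{1/4})$, while the paper invokes the strong law of large numbers; both give the needed $o_\P(\sqrt n)$ and neither buys anything over the other here.
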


\begin{proof}
For any $x\in \Z$ and $m\geq 1$ let $\delta_{x,m}$ be total drift accumulated in the first $m$ visits to $x$:
\begin{equation}\label{delxm}
 \delta_{x,m} 
= \sum_{i=0}^\infty E\left[ X_{i+1}-X_i \mid \mathcal{F}^X_i \right] \ind{X_i=x,\, \mathcal{L}(x,i)\leq m}, \qquad \forall x\in \Z, \, m\geq 1. 
\end{equation}
With this notation we have that
$\Gamma_k =  \sum_{x \in [I_{k-1}^X, S_{k-1}^X]} \delta_{x,\mathcal{L}(x,k-1)}$. 
Also, it follows from Lemma \ref{lem:Edx} that   
$\gamma(S_{k-1}^X + I_{k-1}^X) = \sum_{x \in [I_{k-1}^X, S_{k-1}^X]} E[\delta_x]$. 
Combining these two facts we see that
\begin{align}
&P\left( \sup_{k\leq nt} \left| \Gamma_{k} - \gamma( S^X_{k-1} + I^X_{k-1}) \right| > \epsilon \sqrt{n} \right) \nonumber \\
&\quad \leq P\Bigg( \sup_{k\leq nt} \Big| \sum_{x \in [I_{k-1}^X, S_{k-1}^X]} \left( \delta_{x,\mathcal{L}(x,k-1)} - \delta_x \right) \Big| \geq \frac{\epsilon\sqrt{n}}{2} \Bigg) \label{dxsums1} \\
&\quad\qquad  + P\Bigg( \sup_{k\leq nt} \Big| \sum_{x \in [I_{k-1}^X, S_{k-1}^X]} \left( \delta_x - E[\delta_x] \right)  \Big| \geq \frac{\epsilon\sqrt{n}}{2} \Bigg). \label{dxsums2}
\end{align}
For the probability in \eqref{dxsums2},  
it follows from the symmetry considerations in Remark \ref{rem:sym} that
for any fixed $K>0$,
\begin{align*}
 &P\Bigg( \sup_{k\leq nt} \Big| \sum_{x \in [I_{k-1}^X, S_{k-1}^X]} \left( \delta_x - E[\delta_x] \right)  \Big| \geq \frac{\epsilon\sqrt{n}}{2} \Bigg)\\
 &\leq 2 P\left(S_{\fl{nt}}^X \geq K\sqrt{n} \right) +  P\Bigg( |\delta_0| \geq \frac{\epsilon \sqrt{n}}{6} \Bigg) 
+ 2 P\Bigg( \max_{k\leq K\sqrt{n}} \Big| \sum_{x=1}^k \left( \delta_x - E[\delta_x] \right) \Big| \geq \frac{\epsilon\sqrt{n}}{6} \Bigg).
\end{align*}
For any fixed $\epsilon, K>0$, the last
two probabilities vanish as $n\to\infty$ by Lemma \ref{lem:bdxfin},
Remark \ref{rem:dxind}, and the strong law of large numbers.  Since
Proposition \ref{minmaxtight} implies that the first probability can
be made arbitrarily small (uniformly in $n$) by taking $K$
sufficiently large, we conclude that the
probability in \eqref{dxsums2} goes to $0$ as
$n\to\infty$ for any $\epsilon>0$.

It remains to estimate the probability in \eqref{dxsums1}.  To this
end, we will fix a parameter
$b \in (\frac{\gamma_+}{2}, \frac{1}{2})$. 
Then, since $|\delta_{x,m} - \delta_x| \leq \bar{\delta}_x$, we get that for any $M>0$
\begin{align}
&P\Bigg( \sup_{k\leq nt} \Big| \sum_{x \in [I_{k-1}^X, S_{k-1}^X]} \left( \delta_{x,\mathcal{L}(x,k-1)} - \delta_x \right) \Big| \geq \frac{\epsilon\sqrt{n}}{2} \Bigg) \nonumber \\
&\leq P\Bigg( \sup_{k\leq nt}  \sum_{x \in [I_{k-1}^X, S_{k-1}^X]} \bar\delta_x \ind{ \mathcal{L}(x,k-1) \leq M}  \geq \frac{\epsilon\sqrt{n}}{4} \Bigg) \nonumber \\
&\quad + P\Bigg( \sup_{k\leq nt} \sum_{x \in [I_{k-1}^X, S_{k-1}^X]} \left| \delta_{x,\mathcal{L}(x,k-1)} - \delta_x \right| \ind{ \mathcal{L}(x,k-1) > M} \geq \frac{\epsilon\sqrt{n}}{4} \Bigg) \nonumber \\
&\leq P\left( \max_{|x|\leq nt } \bar\delta_x \geq \frac{\epsilon}{16}n^{\frac{1}{2}-b}  \right) + P\Bigg( \sup_{k\leq nt} \sum_{x \in [I_{k-1}^X, S_{k-1}^X]} \ind{ \mathcal{L}(x,k-1) \leq M}  \geq 4 n^{b} \Bigg) \label{fv} \\
&\quad + P\Bigg( \sup_{k\leq nt} \sum_{x \in [I_{k-1}^X, S_{k-1}^X]} \left| \delta_{x,\mathcal{L}(x,k-1)} - \delta_x \right| \ind{ \mathcal{L}(x,k-1) > M} \geq \frac{\epsilon\sqrt{n}}{4} \Bigg). \label{mv}
\end{align}
Since $b<\frac{1}{2}$, it follows from Lemma \ref{lem:bdxfin} and Remark \ref{rem:dxind} that the first probability in \eqref{fv} vanishes as $n\to\infty$, while, since $b > \frac{\gamma_+}{2}$, the second probability in \eqref{fv} vanishes by Lemma \ref{lem:smalllt}.
For the probability in \eqref{mv}, we have for any $K>0$ that 
\begin{align}
&  P\Bigg( \sup_{k\leq nt} \sum_{x \in [I_{k-1}^X, S_{k-1}^X]} \left| \delta_{x,\mathcal{L}(x,k-1)} - \delta_x \right| \ind{ \mathcal{L}(x,k-1) > M} \geq \frac{\epsilon\sqrt{n}}{4} \Bigg) \nonumber \\
&\leq 2 P(S_{\fl{nt}-1}^X \geq K\sqrt{n} ) + P\Bigg( \sum_{|x|\leq K\sqrt{n}}  \left( \sup_{m>M} |\delta_{x,m} - \delta_x| \right) \geq \frac{\epsilon\sqrt{n}}{4} \Bigg) \nonumber \\
&\leq 2 P(S_{\fl{nt}-1}^X \geq K\sqrt{n} ) + \frac{4}{\epsilon\sqrt{n}} \sum_{|x|\leq K\sqrt{n}} E\left[ \sup_{m> M} |\delta_{x,m} - \delta_x| \right] \nonumber \\
&\leq 2 P(S_{\fl{nt}-1}^X \geq K\sqrt{n} ) + \frac{4}{\epsilon\sqrt{n}}  E[\bar\delta_0] + \frac{8K}{\epsilon} E\left[ \sup_{m> M} |\delta_{1,m} - \delta_1| \right] \label{llt}
\end{align}
where we have used the symmetry considerations from Remark
\ref{rem:sym} in the first and last inequalities.  Since
$|\delta_{1,m}-\delta_1|\leq \bar\delta_1$, it follows from Lemma
\ref{lem:bdxfin} and the dominated convergence theorem that
$E\left[ \sup_{m>M} |\delta_{1,m} - \delta_1| \right]\to 0$ as
$M\to\infty$.  Using this together with Proposition \ref{minmaxtight}
we can choose $K,M>0$ so that the first and third terms in \eqref{llt}
are arbitrarily small (uniformly in $n$). Thus, we can conclude that
the probability in \eqref{mv} vanishes as $n\to\infty$.
\end{proof}

\begin{proof}[Proof of Theorem \ref{thmAFC}]
First, we claim that the sequence of processes $\left(\frac{X_{\fl{nt}}}{\sqrt{n}}\right)_{t\geq 0}$ is tight in $D([0,\infty))$ and that any subsequential limit is concentrated on continuous paths.
This follows from the decomposition $X_n = M_n + \Gamma_n$, the tightness of the martingale term $M_n$ in Lemma \ref{martpart}, the approximation of the accumulated drift term $\Gamma_n$ by $\gamma(I_n^X + S_n^X)$ from Lemma \ref{driftpart}, and the tightness of the running extrema from Lemma \ref{minmaxtight}. 
The details of this argument are almost identical to the proof of \cite[Lemma 4.4]{kpERWPCS} and are therefore omitted.

By Proposition \ref{minmaxtight} and Lemmas \ref{martpart} and \ref{driftpart} we can then conclude that the process triple $\frac{1}{\sqrt{n}}\left( X_{\fl{nt}}, M_{\fl{nt}}, \Gamma_{\fl{nt}} \right)_{t\geq 0}$ is a tight sequence in $D([0,\infty))^3$ and that any subsequential limit $(Y_1(t), Y_2(t), Y_3(t))_{t\ge 0}$ is a continuous process such that $Y_2$ is a standard Brownian motion, $Y_3(t) = \gamma( \sup_{s\leq t} Y_1(s) + \inf_{s\leq t} Y_1(s) )$ for all $t\ge 0$, $P$-a.s., and $Y_1(t) = Y_2(t) + Y_3(t)$. That is, $Y_1$ is a $(\gamma,\gamma)$-BMPE as claimed. 
\end{proof}

\section{Polynomially self-repelling case}\label{three}

In this section we prove Theorem~\ref{thmPSR} modulo a version of a
generalized Ray--Knight theorem, Proposition~\ref{lemC1}. This
proposition deals with increments of numbers of upcrossings and is a
key fact behind the argument below. Its proof is based, in turn, on a
detailed analysis of a generalized P\'olya urn model which we carry
out later, in Section ~\ref{sec:Polya}.

  Recall the definition of the process $W_\alpha$ in \eqref{wlim} and
  let $I_\alpha(t):=\inf_{0\le s\le t}W_\alpha(s)$ and
  $S_\alpha(t):=\sup_{0\le s\le t}W_\alpha(s)$. Then the triple
  $(I_\alpha(t),W_\alpha(t),S_\alpha(t))$ is a strong Markov
  process\footnote{relative to any filtration with respect to which
    $(B(t))_{t\ge 0}$, that appears in the definition $W_\alpha$, is a
    Brownian motion, see \cite[Section 2.3]{cpyBetaPBM}.} which
  satisfies the equation
\[W_\alpha(t)=B_\alpha(t)+\frac12 S_\alpha(t)+\frac12
  I_\alpha(t),\quad W_\alpha(0)=0,\ t\ge 0,\] where
$B_\alpha(t)=\sqrt{2\alpha+1}\,B(t)$, $t\ge 0$, and $(B(t))_{t\ge 0}$ is a
standard Brownian motion. Recall also that $Z^{(\alpha,\delta)}$ is
$(2(2\alpha+1))^{-1}$ times a BESQ$^\delta$. The process $Z^{(\alpha,\delta)}$ has a non-random initial point and solves
\begin{equation}
  \label{Za0}
  dZ^{(\alpha,\delta)}(x)=\frac{\delta}{2(2\alpha+1)}\,dx+\frac{1}{\sqrt{2\alpha+1}}\,\sqrt{2Z^{(\alpha,\delta)}(x)}\,dB(x),
\end{equation}
$0\le x\le \inf\{y>0:\ Z^{(\alpha,\delta)}(y)=0\}$. Throughout this
section we also assume that $Z^{(\alpha,\delta)}$ is always absorbed
upon hitting $0$. Let
\begin{equation}\label{half}
  L^{W_\alpha}_t(x):=\lim_{\epsilon\downarrow 0}\frac{1}{2\epsilon}\int_0^t\indf{[x,x+\epsilon]}(W_\alpha(s))\,ds,\quad x\in\R,\ t\ge 0,
\end{equation}
be a half of the local time\footnote{The choice to work with a half of
  the local time corresponds to the fact that for the walk we consider
  processes of edge local times.} of $W_\alpha$ at $x$ by time $t$ and set
${\mathcal T}_\ell^{L^{W_\alpha}}:=\inf\{t\ge 0:
L^{W_\alpha}_t(0)>\ell\}$, $\ell\in[0,\infty)$. We shall write
$L^{W_\alpha}_{{\mathcal T}_\ell}$ instead of more cumbersome
$L^{W_\alpha}_{{\mathcal T}_\ell^{L^{W_\alpha}}}$.
\begin{proposition}\label{settingthingsup}
  For every $M>0$,
  $\left(L^{W_\alpha}_{{\mathcal T}_M}(x)\right)_{x\ge
    0}\overset{\text{Law}}{=}\left(Z^{\alpha,1}(x)\right)_{x\ge 0}$ with
  $Z^{(\alpha,1)}(0)=M$.  As $M\to\infty$,
\begin{equation}
  \label{LM}
  \left({\mathcal Z}^{W_\alpha}_M(x)\right)_{x\ge 0}:=\left(L^{W_\alpha}_{{\mathcal T}_{M+1}}(x)-L^{W_\alpha}_{{\mathcal T}_M}(x)\right)_{x\ge 0}\Longrightarrow \left(Z^{(\alpha,0)}(x)\right)_{x\ge 0},
  \ \ Z^{(\alpha,0)}(0)=1.
\end{equation}
\end{proposition}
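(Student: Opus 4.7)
The plan is to establish the first claim as a direct rescaling of the Ray--Knight theorem for the BMPE $W^{1/2,1/2}$ in \cite[Theorems~3.1, 3.4]{cpyBetaPBM} (after absorbing the $\sqrt{2\alpha+1}$ factor relating $W_\alpha$ and $W^{1/2,1/2}$ into the normalization $(2(2\alpha+1))^{-1}$ that defines $Z^{(\alpha,1)}$), and to deduce the second claim by a strong Markov / ``no-contact-with-extrema'' argument that reduces the increment to a classical Ray--Knight statement for a scaled Brownian motion. For the first claim, I would use $W_\alpha=\sqrt{2\alpha+1}\,W^{1/2,1/2}$ and the change-of-variables formula $L^{W_\alpha}_t(x)=(2\alpha+1)^{-1/2}L^{W^{1/2,1/2}}_t(x/\sqrt{2\alpha+1})$ (immediate from \eqref{half}), then check that the cited Ray--Knight theorem for $W^{1/2,1/2}$ at its $0$-local-time hitting times rescales precisely into the SDE \eqref{Za0} with $\delta=1$ and initial condition $M$.

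For the second claim I would proceed as follows. Using the strong Markov property of $B_\alpha=\sqrt{2\alpha+1}\,B$ at the stopping time $\mathcal T_M$, introduce
\begin{equation*}
\tilde B_\alpha(s):=B_\alpha(\mathcal T_M+s)-B_\alpha(\mathcal T_M),\qquad \tilde\tau:=\inf\{s>0:L^{\tilde B_\alpha}_s(0)>1\}.
\end{equation*}
Then $\tilde B_\alpha$ is again a scaled BM, independent of $\mathcal F_{\mathcal T_M}$, and in particular independent of the pair $\bigl(S_\alpha(\mathcal T_M),I_\alpha(\mathcal T_M)\bigr)$. Define
\begin{equation*}
A_M:=\bigl\{\sup_{s\le\tilde\tau}\tilde B_\alpha(s)<S_\alpha(\mathcal T_M)\bigr\}\cap\bigl\{\inf_{s\le\tilde\tau}\tilde B_\alpha(s)>I_\alpha(\mathcal T_M)\bigr\}.
\end{equation*}

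The key step I would carry out next is the deterministic identification: on $A_M$ the running extrema of $W_\alpha$ do not change on $[\mathcal T_M,\mathcal T_M+\tilde\tau]$, so the perturbation term $\tfrac12(S_\alpha+I_\alpha)$ remains equal to $-B_\alpha(\mathcal T_M)$ throughout this interval, which forces $W_\alpha(\mathcal T_M+s)=\tilde B_\alpha(s)$ for every $s\in[0,\tilde\tau]$. Consequently, on $A_M$, the half-local-time at $0$ of the shifted process matches that of $\tilde B_\alpha$, so $\mathcal T_{M+1}-\mathcal T_M=\tilde\tau$ and $\mathcal Z^{W_\alpha}_M(x)=L^{\tilde B_\alpha}_{\tilde\tau}(x)$ for all $x\ge 0$. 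The classical second Ray--Knight theorem applied to $\tilde B_\alpha$ (with the half-local-time convention of \eqref{half}) then identifies the law of $(L^{\tilde B_\alpha}_{\tilde\tau}(x))_{x\ge 0}$ as that of $Z^{(\alpha,0)}$ starting at $1$, by exactly the same scaling as in the first part but with $\delta=0$.

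To close the argument I would show $P(A_M)\to1$: using independence of $\tilde B_\alpha$ and $(S_\alpha(\mathcal T_M),I_\alpha(\mathcal T_M))$, the a.s.\ recurrence of $W_\alpha$ (giving $S_\alpha(\mathcal T_M)\to+\infty$ and $I_\alpha(\mathcal T_M)\to-\infty$, which also follows from Brownian scaling via $S_\alpha(\mathcal T_M)\stackrel{d}{=}M S_\alpha(\mathcal T_1)$), and the a.s.\ finiteness of $\sup_{s\le\tilde\tau}|\tilde B_\alpha(s)|$, conditioning on $\tilde B_\alpha$ yields $P(A_M^c)\to 0$. The weak convergence in \eqref{LM} is then immediate by decomposing, for each bounded continuous functional $F$,
\begin{equation*}
\bigl|E[F(\mathcal Z^{W_\alpha}_M)]-E[F(Z^{(\alpha,0)})]\bigr|\le\bigl|E[F(L^{\tilde B_\alpha}_{\tilde\tau})]-E[F(Z^{(\alpha,0)})]\bigr|+2\|F\|_\infty P(A_M^c),
\end{equation*}
where the first term vanishes by Ray--Knight and the second by the above.

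I expect the main obstacle to be the deterministic identification of $W_\alpha(\mathcal T_M+\cdot)$ with $\tilde B_\alpha$ on $A_M$: it requires delicately unwinding the \emph{global} definition of $W_\alpha$ via its running extrema and arguing, using the strong Markov property of the triple $(I_\alpha,W_\alpha,S_\alpha)$ at $\mathcal T_M$, that no new extremum is recorded on $[\mathcal T_M,\mathcal T_M+\tilde\tau]$ precisely on the event $A_M$. A secondary bookkeeping difficulty will be aligning the various normalizations of local time so that the Ray--Knight statement for scaled BM ($\tilde B_\alpha$) matches the normalization $(2(2\alpha+1))^{-1}$ built into $Z^{(\alpha,\delta)}$.
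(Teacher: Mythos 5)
Your proposal is correct and follows essentially the same strategy as the paper: deduce the first claim from the cited Ray--Knight theorem for BMPE after rescaling, and for the second, reduce to the second Ray--Knight theorem for Brownian motion by restricting to a high-probability event on which $W_\alpha$ coincides with a shifted Brownian motion between $\mathcal T_M$ and $\mathcal T_{M+1}$. The only cosmetic difference is that you phrase the good event directly as non-contact with the current extrema up to $\tilde\tau$, whereas the paper works on the event $\{L^{W_\alpha}_{\mathcal T_M}(x_0)>0\}$ and identifies the increment process only on the compact interval $[0,x_0]$ before letting $x_0$ and $M$ grow; your more explicit unwinding of the deterministic identification (via the exit time of $W_\alpha(\mathcal T_M+\cdot)$ from $(I_\alpha(\mathcal T_M),S_\alpha(\mathcal T_M))$, which coincides with that of $\tilde B_\alpha$) is a welcome clarification of the same idea.
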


\begin{proof}
  The first statement follows from \cite[Theorem 3.1]{cpyBetaPBM}. To
  show \eqref{LM}, we note that $L^{W_\alpha}_{{\mathcal T}_M}(x_0)>0$ for
  some $x_0>0$ implies that $L^{W_\alpha}_{{\mathcal T}_M}(x)>0$ on
  $[0,x_0]$ and also that $S_\alpha({\mathcal T}_M^{W_\alpha})>x_0$. Thus,
  on the event $\{L^{W_\alpha}_{{\mathcal T}_M}(x_0)>0\}$ the
  process
  $(L^{W_\alpha}_{{\mathcal T}_{M+1}}(x)-L^{W_\alpha}_{{\mathcal
        T}_M}(x))_{0\le x\le x_0}$ has the same distribution
  as
  $(L^{B_\alpha}_{{\mathcal T}_1}(x))_{0\le x\le
    x_0}$.\footnote{$L^{B_\alpha}_{{\mathcal T}_1}$ is defined analogously
    to $L^{W_\alpha}_{{\mathcal T}_1}$.} By scaling properties of
  Brownian motion and the second Ray--Knight theorem for the standard
  Brownian motion, the process
  $L^{B_\alpha}_{{\mathcal T}_1}(x),\ x\in[0,x_0]$, has the same
  distribution as $Z^{(\alpha,0)}(x),\ x\in[0,x_0]$. Since for every
  $x_0>0$ the probability of 
  $\{L^{W_\alpha}_{{\mathcal T}_M}(x_0)>0\}$ goes to 1 as
  $M\to\infty$, \eqref{LM} follows.
\end{proof}

The key observation we use to prove  Theorem \ref{thmPSR} is that the discrete analog of the left hand side
of \eqref{LM} constructed with $X^{(n)}$ in place of $W_\alpha$
is close in distribution (Proposition~\ref{lemC1}
below) to $Z^{(0,0)}$, which is a half of BESQ$^0$ and, therefore,
is different from $Z^{(\alpha,0)}$. Before continuing our discussion
let us state this result rigorously.

Recall \eqref{upcr} and for $\ell\ge 0$ define
${\mathcal T}^{\mathcal E}_{\ell}:=\inf\{k\ge 0:\,{\mathcal E}^k(0)>\ell \}$. Just
as before, we shall write ${\mathcal E}^{{\mathcal T}_{\ell }}(x)$ instead of
${\mathcal E}^{{\mathcal T}_{\ell }^{\mathcal E}}(x)$.

\begin{proposition}\label{lemC1}
For every fixed $M>0$, 
as $N\to\infty$,
  $\left(N^{-1}{\mathcal E}^{{\mathcal T}_{NM}}(\fl{Nx}) \right)_{ x\ge
    0}\Longrightarrow \left(Z^{(\alpha,1)} (x)\right)_{s\ge 0}$
  with $Z^{(\alpha,1)}(0)=M$. 
  Moreover, for every $\eta>0$ and $c\in[0,1/2)$ there exists $M_0>0 $ such that
  for every $M \geq M_0 $  there  is an $N(M)$ so that for all $N \ge N(M)$,
  \begin{equation}
    \label{EM}
    \text{dist}\,(P^{Z^{M,c}_N},P_{1-2c}^{Z^{(0,0)}})<\eta,
  \end{equation}
  where $P^{Z^{M,c}_N}$ denotes the law of the process
\[Z^{M,c}_N(x):=\frac{1}{N}\left({\mathcal E}^{{\mathcal T}_{N(M+1-c)}}(\fl{Nx})- {\mathcal E}^{{\mathcal T}_{N(M+c)}}(\fl{Nx})\right),\ \ x\in[0,1],\] and $P_{1-2c}^{Z^{(0,0)}}$ denotes the law of $(Z^{(0,0)}(x))_{x\in[0,1]}$ with $Z^{(0,0)}(0)=1-2c$.
\end{proposition}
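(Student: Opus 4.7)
For the first assertion I would represent the profile $x \mapsto {\mathcal E}^{{\mathcal T}^{\mathcal E}_{NM}}(x)$ as a branching-like process (BLP) in the sense of Appendix \ref{app:blp}: conditional on its value $k$ at site $x$, its value at $x+1$ is a sum of geometric-like random variables whose distributions are determined by the weight $w$. Diffusive rescaling ($x \mapsto x/N$, $k \mapsto k/N$) of this BLP is controlled by its infinitesimal mean and variance, and under \eqref{w} (with $\alpha > 0$) these converge to the drift and diffusion coefficients of the SDE \eqref{Za0} with $\delta = 1$, producing $Z^{(\alpha,1)}$ started at $M$. This is essentially the stopping-at-the-$NM$-th-upcrossing version of \cite[Theorem 1B]{tGRK}, and the technical apparatus is already set up in Appendix \ref{app:blp}.

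For the second (and more delicate) assertion, I would use the strong Markov property at ${\mathcal T}^{\mathcal E}_{N(M+c)}$. The increments $Z^{M,c}_N$ count the upcrossings of each edge during a subsequent run that produces only $\fl{N(M+1-c)} - \fl{N(M+c)} \sim N(1-2c)$ additional upcrossings at $0$. I would view these new upcrossings, as a function of $x$, as a \emph{second-phase} BLP whose offspring distributions depend on the P\'olya-urn states frozen at time ${\mathcal T}^{\mathcal E}_{N(M+c)}$. By the first assertion, provided $M$ is chosen large enough given $\eta$, with probability at least $1 - \eta/2$ every site $\fl{Nx}$ with $x \in [0,1]$ has been visited $\Theta(N)$ times by then. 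The crucial analytical consequence of \eqref{w} with $\alpha > 0$ is
\[
\frac{w(k+j)}{w(k+j) + w(k-j)} = \frac{1}{2} + O\!\left(\frac{|j|}{k}\right), \qquad k \to \infty, \ |j| \ll k,
\]
so after ${\mathcal T}^{\mathcal E}_{N(M+c)}$ each relevant site behaves, up to small corrections, like a site of the unbiased simple random walk.

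I would therefore couple the second-phase BLP to the BLP driven by unbiased (Geometric$(1/2)$) offspring using the P\'olya-urn estimates of Section \ref{sec:Polya}. The unbiased BLP is precisely the critical Galton--Watson process of the discrete Ray--Knight theorem for simple symmetric random walk, whose diffusive limit satisfies $dZ = \sqrt{2Z}\,dB$, i.e.\ $Z^{(0,0)}$, started at the initial value $1 - 2c$ (the rescaled population $N(1-2c)/N$ at $x = 0$). Tightness in $D([0,1])$ follows by an Aldous-type criterion applied to the second-phase BLP, and finite-dimensional convergence follows from the coupling; together they yield \eqref{EM} in the Prokhorov distance.

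The main obstacle is the quantitative coupling estimate. The per-step bias $O(|j|/k)$ is small at each step since $k = \Theta(N)$, but it must be summed over all the $\Theta(N)$ draws received by the urn at each of the $\fl{Nx}$ sites in the second phase, and the discrepancies $j$ themselves fluctuate under the self-correcting dynamics induced by $\alpha>0$. Controlling the cumulative effect so that it washes out in the diffusive limit is precisely what the detailed P\'olya-urn analysis of Section \ref{sec:Polya} provides. A secondary technicality is restricting to sites whose pre-existing local time is bounded below by $\epsilon N$ for some $\epsilon > 0$; this is the role of taking $M$ large after $\eta$ and $c$ are fixed, combined with positivity estimates for $Z^{(\alpha,1)}$ on $[0,1]$ which guarantee a uniform lower bound on the frozen profile with the required probability.
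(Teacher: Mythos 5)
Your proposal is aimed at the right target -- showing that the second-phase profile rescales to $Z^{(0,0)}$ -- but the explanatory mechanism you give and the method you propose both diverge from what actually makes the paper's proof work, and in a way that hides the central difficulty.

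The key issue is your claim that the consequence of \eqref{w} with $\alpha>0$ is ``each relevant site behaves, up to small corrections, like a site of the unbiased simple random walk'' because the per-step transition probability is $\tfrac12+O(|j|/k)$. That estimate is true, but it holds just as well during the \emph{first} phase once $k$ is large, and yet the first-phase profile converges to $Z^{(\alpha,1)}$, not $Z^{(0,1)}$. The per-step bias is small at \emph{every} stage; what produces the $(2\alpha+1)^{-1}$ factor is the accumulated mean-reverting drift $-\alpha\D_i/i$ of the discrepancy process from time $0$. The correct mechanism, and the content of Propositions~\ref{proplimit} and~\ref{VarDtmDtn}, is that the \emph{increment} $\D_{\tau_m^\B}-\D_{\tau_n^\B}$ for $n\to\infty$, $m-n\ll n$ has negligible cumulated drift: $E[\D_{\tau_m^\B}-\D_{\tau_n^\B}]\to 0$ and $\Var(\D_{\tau_m^\B}-\D_{\tau_n^\B})\approx 2(m-n)$, even though $\Var(\D_{\tau_n^\B})\approx 2n/(2\alpha+1)$. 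Your sketch does not identify this as the key fact, and a reader following your intuition would expect both phases to look unbiased, which is wrong.

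Two further gaps. First, the second-phase profile $x\mapsto Z_N^{M,c}(x)$ is \emph{not} a Markov chain on its own: its one-step transition law depends on the first-phase profile $\tilde{\mathcal E}^{\mathcal T_{MN}}(\fl{Nx})$ as well (this is exactly the dependence on the parameter ``$K$'' alluded to in the Remark after Proposition~\ref{lemC1}). Calling it a ``second-phase BLP'' glosses over this; the paper has to augment with a second coordinate tracking the first-phase profile before the Ethier--Kurtz theorem can be applied. Second, you propose a pathwise coupling to a Geometric$(1/2)$ BLP, but the Section~\ref{sec:Polya} estimates are moment bounds tailored to the Ethier--Kurtz martingale characterization, not couplings; a coupling would have to control the \emph{paths} of the discrepancy process over $\Theta(N)$ steps, where the accumulated bias is not obviously $o(\sqrt{N})$ without exactly the variance/mean estimates the paper derives in a different framework. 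Finally, the double-limit structure ($M$ large, then $N$ large, with the paper's proof-by-contradiction using a sequence $N_r\geq M_r^4$ to ensure concentration of the frozen profile around $M_rN_r$) is essential and not just a convenience; your ``lower bound $\epsilon N$ on the frozen profile'' is weaker than the two-sided concentration the moment estimates actually require.
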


\begin{remark}
  We note that the first claim immediately follows from \cite[Theorem
  1B]{tGRK}. Here we offer an informal discussion as to why the
  ``limiting'' process $Z^{(0,0)}$ with $c=0$ in \eqref{EM} is
  different from the limiting process $Z^{(\alpha,0)}$ in
  \eqref{LM}. We have already mentioned that this fact is the key to
  the proof of Theorem~\ref{thmPSR}. The appearance of $Z^{(0,0)}$ in
  \eqref{EM} is not intuitive and is based on a careful analysis of
  processes of left and right jumps of $X$ from a single site.  More
  precisely, if we look at the rescaled difference between the number
  of jumps of $X$ to the right between the $KR$-th and $(K+1)R$-th
  jumps to the left from a single site\footnote{i.e., the process
    $(\D_{\tau^\B_m}-\D_{\tau^\B_n})/\sqrt{2(m-n)}$ with $n=KR$ and
    $m=(K+1)R$, see Proposition~\ref{VarDtmDtn}.} then, as $R$ goes to
  infinity, this rescaled difference has approximately zero mean and
  variance $v(\alpha,K)= 1+O(K^{-1})\to 1$ as $K$ grows large. On the
  other hand, for $K=0$ the variance converges to
  $v(\alpha,0)=(2\alpha+1)^{-1}$ (Proposition~\ref{VarDtmDtn}). This
  is exactly the factor which enters generalized Ray--Knight theorems
  making the limiting processes in the first claims of
  Propositions~\ref{settingthingsup} and \ref{lemC1} to be
  $Z^{(\alpha,1)}$ instead of $Z^{(0,1)}$. The dependence of
  $v(\alpha,K)$ on $K$ reflects the dependence of this rescaled
  difference on the history of the walk prior to the $KR$-th jump to
  the left. As a consequence of this dependence, the ``limiting''
  process in \eqref{EM} is $Z^{(0,0)}$ and not $Z^{(\alpha,0)}$ as in
  \eqref{LM}.  It is these findings that allow us to rule out BMPE as
  a possible weak limit.  The mentioned above
  Proposition~\ref{VarDtmDtn} and the proof of Proposition~\ref{lemC1}
  are given at the end of Subsection~\ref{psrurn}.
\end{remark}

The fact that multiples of the limiting BESQ processes in \eqref{LM}
and \eqref{EM} are different does not immediately imply the statement
of Theorem 1.2, since the difference is expressed in terms of the
local time processes, which are not a.s.\ continuous functionals on
the path space. Nevertheless, replacing local times with averages of
occupation times of small intervals we shall be able to prove
Theorem~\ref{thmPSR}.
  \begin{proof}[Proof of Theorem~\ref{thmPSR}] We work on
    $D([0,\infty))$ with the topology generated by one of the
    equivalent Skorokhod metrics (see, for example,
    \cite[(16.4), (12.16)]{bCOPM}). For $\delta>0$ and
    $\ell\in[0,\infty)$ define
  \[{\mathcal T}_{\delta,\ell}(\omega)=\inf\left\{t\ge 0:\,\frac{1}{2
        \delta}\int_0^t\indf{[0,\delta]}(\omega(s))\,ds>\ell\right\};\
    \ G_{\delta,\ell}(\omega)=\frac12\int_{{\mathcal T}_{\delta,\ell}}^{{\mathcal
        T}_{\delta,\ell+1}}\indf{[0,1]}(\omega(s))\,ds.\] We shall show that, on the one hand,
  \begin{itemize}
  \item [(UB)] for each $\epsilon>0$ there are
    $M_0=M_0(\epsilon),\delta_0=\delta_0(\epsilon)>0$ such that for
    all $M\ge M_0$, $\delta\in(0,\delta_0)$, 
    \begin{equation} \label{ubound}
      E^{W_\alpha}\left[G_{\delta,M}^2 \right]-E\left[\left(\int_0^1 Z^{(\alpha,0)}(x)\,dx\right)^2\right]\overset{Lem.\,\ref{calc}}{=} E^{W_\alpha}\left[G_{\delta,M}^2 \right]-\left(1+\frac{2}{3(2\alpha+1)}\right)\le \epsilon,
    \end{equation}
  \end{itemize}
  while, on the other hand,
  \begin{itemize}
  \item [(LB)] for each $\epsilon>0$ there are $M\ge M_0(\epsilon)$, $0<\delta\le \delta_0(\epsilon)$ and $K_0=K_0(\epsilon)$ such that for all $K\ge K_0$ and all sufficiently large $n$ of the form $n=N^2$
    \begin{equation}
      \label{lbound}
      E^{X^{(n)}}\left[G_{\delta,M}^2\wedge K\right]-E\left[\left(\int_0^1 Z^{(0,0)}(x)\,dx\right)^2\right]\overset{Lem.\,\ref{calc}}{=} E^{X^{(n)}}\left[G_{\delta,M}^2\wedge K\right]-\frac{5}{3}\ge -2\epsilon.
    \end{equation}
\end{itemize}
Once this is done, we take an
$\epsilon\in\left(0,\frac{\alpha}{3(2\alpha+1)}\right)$, choose
$M_0$, $\delta_0$ as in (UB) and then $\delta$, $M$, and $K$ as in
(LB), and get that for all large $n$ of the form $n=N^2$
\[ E^{X^{(n)}}\left[G_{\delta,M}^2\wedge K\right]
\ge E^{W_\alpha}\left[G_{\delta,M}^2 \right]+\epsilon
\ge E^{W_\alpha}\left[G_{\delta,M}^2\wedge K\right]+\epsilon.
\] Since $G_{\delta,M}^2\wedge K$ is a
continuous bounded functional on $D([0,\infty))$ (see Lemma \ref{cont} below), the conclusion of
Theorem~\ref{thmPSR} follows.
\end{proof}

It is left to prove bounds (UB) and (LB). We shall need the following technical lemma. Its proof is given in Appendix~\ref{app:psr}.
    \begin{lemma}
      \label{cont} For all $\delta>0, M\ge 0$, $G_{\delta,M}:\Omega\to \R$
      is a $P^{W_\alpha}$-a.s.\ continuous functional. Moreover, there is $\delta_0\in(0,1/2]$ such that for
      every $p\ge 1$,
      $\sup\limits_{M\ge 0,\,
        \delta\in(0,\delta_0]}E^{W_\alpha}\left[(G_{\delta,M})^p\right]<\infty$.
    \end{lemma}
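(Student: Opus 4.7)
For the continuity claim, I plan to identify a full-measure event $\Omega_0$ of paths $\omega$ for which: (i) $\omega$ is continuous; (ii) the Lebesgue measure of $\{s \ge 0 : \omega(s) \in \{0,\delta,1\}\}$ is zero; and (iii) the function $t \mapsto \frac{1}{2\delta}\int_0^t \indf{[0,\delta]}(\omega(s))\,ds$ is strictly increasing at each of $\mathcal{T}_{\delta,M}(\omega)$ and $\mathcal{T}_{\delta,M+1}(\omega)$. Property (i) is the continuity of $W_\alpha$ noted right after Definition~\ref{bmpe}, (ii) is an immediate consequence of the existence of a jointly continuous family of local times for the continuous semimartingale $W_\alpha$, and (iii) follows from the strong Markov property of the triple $(I_\alpha, W_\alpha, S_\alpha)$ applied at the first return of $W_\alpha$ to $[0,\delta]$ after $\mathcal{T}_{\delta,\ell}$, together with the fact that once $W_\alpha$ is in $[0,\delta]$ it accumulates occupation there at a positive rate. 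For $\omega \in \Omega_0$ and $\omega_n \to \omega$ in the Skorokhod topology on $D([0,\infty))$, continuity of the limit forces uniform convergence on compact time intervals; combining this with (ii) and bounded convergence yields locally uniform convergence of $\int_0^t \indf{[0,\delta]}(\omega_n(s))\,ds$, whence (iii) gives $\mathcal{T}_{\delta,M}(\omega_n) \to \mathcal{T}_{\delta,M}(\omega)$ and its analogue for $M+1$. A final application of bounded convergence with $\indf{[0,1]}$ (invoking (ii) at level $1$) across the now-converging time intervals produces $G_{\delta,M}(\omega_n) \to G_{\delta,M}(\omega)$.

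For the moment bound I would start from the occupation times formula together with \eqref{half} to write
\[
G_{\delta,M} = \int_0^1 \bigl(L^{W_\alpha}_{\mathcal{T}_{\delta,M+1}}(x) - L^{W_\alpha}_{\mathcal{T}_{\delta,M}}(x)\bigr)\,dx,
\]
and then apply Jensen's inequality to reduce the bound on $E^{W_\alpha}[(G_{\delta,M})^p]$ to a uniform (in $x \in [0,1]$, $M \ge 0$, $\delta \in (0,\delta_0]$) $L^p$ bound on the local-time increment $L^{W_\alpha}_{\mathcal{T}_{\delta,M+1}}(x) - L^{W_\alpha}_{\mathcal{T}_{\delta,M}}(x)$. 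To obtain this pointwise bound I would appeal to the strong Markov property of $(I_\alpha, W_\alpha, S_\alpha)$ at $\mathcal{T}_{\delta,M}$: after this stopping time the surviving process is again a BMPE, issued from a state whose $W_\alpha$-coordinate lies in $[0,\delta]$, and the local-time increment at height $x$ is stochastically dominated by the local time accumulated at height $x$ up to the first instant at which the local time at $0$ has grown by a bounded (deterministic) amount. By the Ray--Knight identity used in the proof of Proposition~\ref{settingthingsup} (equivalently, the classical second Ray--Knight theorem applied to the Brownian part $B_\alpha$ along excursions of $W_\alpha$ above $0$), this dominating quantity is half of a BESQ-type process on $[0,1]$ whose initial value at $x=0$ is, for $\delta$ below a suitable threshold, bounded in $L^p$ for every $p$ uniformly in $M$. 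Since any such BESQ process has moments of all orders at every fixed spatial time, the required uniform $L^p$ bound follows.

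The principal obstacle, in my view, is making the last uniformity precise. The normalization $\frac{1}{2\delta}\int_{\mathcal{T}_{\delta,M}}^{\mathcal{T}_{\delta,M+1}} \indf{[0,\delta]}(W_\alpha(s))\,ds = 1$ only forces the local-time increment at height $0$ to \emph{average} to $1$ on $[0,\delta]$; converting this into a uniform (in $M$) bound on the increment at a single spatial level requires uniform control of the spatial continuity of $x \mapsto L^{W_\alpha}_{\mathcal{T}_{\delta,M}}(x)$ near $x=0$, which I would establish by a quantitative Kolmogorov-type estimate applied to the BESQ representation of Proposition~\ref{settingthingsup}. The threshold $\delta_0 \in (0,1/2]$ in the lemma is then chosen to be any value below which these estimates hold uniformly in $M$; in particular the choice $\delta_0 \le 1/2$ keeps $[0,\delta]$ strictly inside $[0,1]$ so that the occupation time formula comparison used above is valid.
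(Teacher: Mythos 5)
Your argument for the a.s.\ continuity of $G_{\delta,M}$ is essentially the paper's: you isolate a full-measure set of continuous paths on which the level sets $\{\omega(t)\in\{0,\delta,1\}\}$ have zero measure and on which $\ell\mapsto\mathcal{T}_{\delta,\ell}(\omega)$ is continuous at $\ell\in\{M,M+1\}$ (your ``strict increase'' condition is equivalent to the paper's $\mathcal{T}_{\delta,\ell-}=\mathcal{T}_{\delta,\ell}$), and then push Skorokhod convergence through. You omit the requirement $\text{meas}\{t:\omega(t)\in[0,\delta]\}=\infty$, which is needed to ensure $\mathcal{T}_{\delta,\ell}<\infty$, but this is a minor oversight.

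For the moment bound your route is genuinely different from the paper's, and the step you flag as ``the principal obstacle'' is in fact a real gap that your proposed fix does not close. You write $G_{\delta,M}=\int_0^1\bigl(L^{W_\alpha}_{\mathcal{T}_{\delta,M+1}}(x)-L^{W_\alpha}_{\mathcal{T}_{\delta,M}}(x)\bigr)\,dx$ and aim at a uniform $L^p$ bound on the local-time increment. But Proposition~\ref{settingthingsup} is a Ray--Knight theorem for the stopping times $\mathcal{T}^{L^{W_\alpha}}_M=\inf\{t:L_t^{W_\alpha}(0)>M\}$, \emph{not} for the $\delta$-averaged-occupation times $\mathcal{T}_{\delta,M}$; invoking its ``BESQ representation'' therefore requires relating $\mathcal{T}_{\delta,M}$ to $\mathcal{T}^{L^{W_\alpha}}_M$, and that comparison error is not small uniformly in $M$. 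Indeed, the normalization $\frac{1}{\delta}\int_0^\delta\bigl(L_{\mathcal{T}_{\delta,M+1}}-L_{\mathcal{T}_{\delta,M}}\bigr)(y)\,dy=1$ controls only an average, and by Ray--Knight the oscillation of $x\mapsto L_{\mathcal{T}^{L}_{M}}(x)$ over $[0,\delta]$ is of order $\sqrt{M\delta}$, which is unbounded in $M$ for fixed $\delta$; a Kolmogorov-type modulus estimate will not produce a bound that is uniform in $M$ without additional structural input. Trying to control the oscillation of the \emph{increment} process itself is circular, since its modulus of continuity depends on the very quantity one is trying to bound.

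The paper avoids all of this by never passing to local times: it partitions on $A_1=\{\mathcal{T}_{\delta,M}\ge\tau_1^{W_\alpha}\}$, $A_2=\{\tau_1^{W_\alpha}\ge\mathcal{T}_{\delta,M+1}\}$, $A_3$ (the remaining case). On $A_1$ the running maximum already exceeds $1$ (and the minimum is a.s.\ $<0$), so after excising excursions of $W_\alpha(\mathcal{T}_{\delta,M}+\cdot)$ outside $[0,1]$ one is left with a reflected Brownian motion $B^r_\alpha$ on $[0,1]$, and $G_{\delta,M}\le\mathcal{T}_{\delta,1}(B^r_\alpha)$, which depends on $M$ only through the starting point $W_\alpha(\mathcal{T}_{\delta,M})\in[0,\delta]$. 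On $A_2$ the integral runs inside $[0,\tau_1^{W_\alpha}]$ and the first Ray--Knight theorem for $W_\alpha$ gives a bound by moments of $\int_0^1Z^{(\alpha,1)}$. The remaining case $A_3$ splits at $\tau_1^{W_\alpha}$ and combines the two. The uniformity in $M$ is then automatic, and the uniformity in $\delta\in(0,\delta_0]$ is reduced to the single estimate $\max_{x\in[0,1]}P^{B^r_\alpha}_x(\mathcal{T}_{\delta,1}>n(t+1))\le(1-c)^n$, obtained from $L^{B^r_\alpha}_t(0)\to\infty$ plus the strong Markov property. This device of ``falling back on the reflected Brownian motion once $\tau_1$ has occurred'' is precisely what your local-time route is missing, and without it you do not get uniformity in $M$.
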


\begin{proof}[Proof of (UB)]
  Let
  ${\mathcal{T}}'_{\delta,M} = \inf \{ s >{\mathcal{T}}_{\delta,M}: W_\alpha
  (s) =0 \}$ and
  ${\mathcal{T}}'_{\delta,M,s} = \inf \{t >{\mathcal{T}}' _{\delta,M}:
  L^{W_\alpha }_t(0) - L^{W_\alpha }_{{\mathcal{T}}' _{\delta,M}}(0) =
  s\}$.\footnote{Recall \eqref{half}: $L^{W_\alpha }_t(0)$ denotes the
    half of the local time at $0$ by time $t$.}  We fix $\lambda > 0 $
  (eventually we will fix $\lambda\in(0,1) $ small) and introduce two
  ``bad'' events 
\begin{align}
 A_1 &= \{ I_\alpha ({\mathcal{T}}_{\delta,M}) < W_\alpha (s)  < S_\alpha ({ \mathcal{T}}_{\delta,M})\ \forall s\in
[{\mathcal{T}}_{\delta,M},{ \mathcal{T}}'_{\delta,M,2}  \vee {\mathcal{T}}_{\delta,M+1}] \}^c
\label{c1}\\ \shortintertext{and}
A_2 &= \{{\mathcal{T}}'_{\delta,M}- {\mathcal{T}}_{\delta,M}  \geq \lambda \} \cup \{
\vert {\mathcal{T}}_{\delta,M+1} - {\mathcal{T}}'_{\delta,M,1} \vert  \geq \lambda  \}. \label{c2}
\end{align}
We shall show below in Lemma~\ref{c12} that for each $\eta > 0 $ there
exist $M_0, \delta_0>0$ such that $P(A_1 \cup A_2) < \eta$ for all
$M \geq M_0, \delta \in(0, \delta_0]$.  
On $(A_1 \cup A_2)^c$ we introduce the process
$ \tilde{B}_\alpha(s):= B_\alpha ({\mathcal{T}}'_{\delta,M}+s) - B_\alpha
({\mathcal{T}}'_{\delta,M}) $ for $s \in[0,\nu - {\mathcal{T}}'_{\delta,M}]$
where
$\nu = \inf \{ s>{\mathcal{T}}_{\delta,M}: W_\alpha (s)
=I_\alpha({{\mathcal{T}}_{\delta,M}} ) \mbox{ or }
S_\alpha({\mathcal{T}}_{\delta,M})\}$.
Then on $(A_1 \cup A_2)^c$ 
\begin{equation*}
\int_{{\mathcal{T}}_{\delta,M}}^{{\mathcal{T}}_{\delta,M+1}}\indf{[0,1]}(W_{\alpha} (s))\,ds - 2 \lambda\le  \int_{{\mathcal{T}}'_{\delta,M}}^{{\mathcal{T}}'_{\delta,M,1}}\indf{[0,1]}(W_{\alpha} (s)) ds = \int_0^{{\mathcal T}^{L^{\tilde{B}_\alpha}}_1} \indf{[0,1]}(\tilde{B}_\alpha(s)) ds
 .
\end{equation*}
We note that on $(A_1\cup A_2)^c$ the process $(\tilde{B}_\alpha(s))_{s\ge 0}$ has the same distribution as $(B_\alpha(s))_{s\ge 0}$ and that, by the second Ray--Knight theorem for the standard Brownian motion and scaling,\[\frac12\int_0^{{\mathcal T}^{L^{B_\alpha}}_1} \indf{[0,1]}(B_\alpha(s)) ds\overset{\text{Law}}{=}\int_0^1Z^{(\alpha,0)}(x)\,dx,\ \text{where }Z^{(\alpha,0)}(0)=1.
\] Hence, 
\begin{align*}
E^{W_\alpha}[{G}_{\delta,M}^2] &=
 E^{W_\alpha}\left[{G}_{\delta,M}^2\indf{(A_1\cup A_2)^c}\right]+ E^{W_\alpha}\left[{G}_{\delta,M}^2\indf{A_1\cup A_2}\right] \\
& \leq
E\left[ \bigg( \frac12\int_0^{{\mathcal T}^{L^{B_\alpha}}_1} \indf{[0,1]}(B_\alpha (s)) ds +\lambda \bigg)^2 \right] + 
\left(E^{W_\alpha}\left[{G}_{\delta,M}^4\right]P(A_1\cup A_2)\right)^{1/2}  \\ &
\leq
E\left[ \left( \int_0^1Z^{(\alpha,0)}(x)\,dx\right)^2\right] + C\lambda +\left(E^{W_\alpha}\left[{G}_{\delta,M}^4\right]P(A_1\cup A_2)\right)^{1/2} 
\end{align*}
for a universal constant $C$.  Finally, we choose $\lambda $ to be
small enough so that $ C\lambda \le \epsilon/2$ and then use Lemmas~\ref{cont} and \ref{c12} to conclude that there exist
$M_0, \delta _0$ such that the last term in the above display
formula is less than $\epsilon/2$ whenever $M \geq M_0 $ and
$\delta \in(0,\delta _0]$. This completes the proof of (UB).
\end{proof}

\begin{lemma}\label{c12}
  Fix $\lambda>0$ and define $A_1$ and $A_2$ by \eqref{c1} and
  \eqref{c2}. For each $\eta > 0 $ there exist $M_0, \delta_0>0$
  depending on $\eta$ and $\lambda$ such that $P(A_1 \cup A_2) < \eta$
  for all $M \geq M_0, \delta \in(0, \delta_0]$.
\end{lemma}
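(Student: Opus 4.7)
The plan is to bound $P(A_1)$ and $P(A_2)$ separately by a union bound, using the strong Markov property of the triple $(I_\alpha, W_\alpha, S_\alpha)$ at the stopping time ${\mathcal T}_{\delta,M}$ together with the fundamental fact about BMPE that, as long as no new extremum is reached, $W_\alpha$ coincides with the scaled Brownian motion $B_\alpha$ up to an additive constant. In particular, on $A_1^c$ the shifted process $W_\alpha({\mathcal T}_{\delta,M} + \cdot) - W_\alpha({\mathcal T}_{\delta,M})$ is exactly a scaled Brownian motion during $[0, {\mathcal T}'_{\delta,M,2} \vee {\mathcal T}_{\delta,M+1} - {\mathcal T}_{\delta,M}]$, and $W_\alpha({\mathcal T}_{\delta,M}) \in [0,\delta]$ by the definition of the stopping time. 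I will abbreviate $\sigma_\delta(t) := (2\delta)^{-1} \int_0^t \indf{[0,\delta]}(W_\alpha(s))\,ds$ throughout.

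For $P(A_1)$, first Proposition~\ref{settingthingsup} and Brownian scaling for BMPE show that $S_\alpha({\mathcal T}_M) \to \infty$ in probability as $M \to \infty$ (it is the hitting time of $0$ for BESQ$^1$ started at $M$), and by the symmetry $W_\alpha \overset{\text{Law}}{=} -W_\alpha$ also $I_\alpha({\mathcal T}_M) \to -\infty$. Next, since $\sigma_\delta$ converges uniformly on compacts in probability to $L^{W_\alpha}_\cdot(0)$ as $\delta \to 0$ (via the occupation-time formula and continuity of the local time in the spatial variable), the extrema at ${\mathcal T}_{\delta,M}$ inherit the same behavior once $\delta$ is chosen small. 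Finally, on $A_1^c$ the duration ${\mathcal T}'_{\delta,M,2} \vee {\mathcal T}_{\delta,M+1} - {\mathcal T}_{\delta,M}$ is stochastically tight in $\delta$, since it is dominated by the time a scaled Brownian motion near $0$ needs to accumulate both two units of local time at $0$ and one unit of normalized occupation of $[0,\delta]$. Choosing $M$ large and then $\delta$ small makes $|I_\alpha({\mathcal T}_{\delta,M})|$ and $S_\alpha({\mathcal T}_{\delta,M})$ much larger than the fluctuations of the scaled BM over this tight duration, forcing $P(A_1) < \eta/2$.

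For $P(A_2)$, on $A_1^c$ the quantity ${\mathcal T}'_{\delta,M} - {\mathcal T}_{\delta,M}$ is the hitting time of $0$ for a scaled Brownian motion starting in $[0,\delta]$, and the reflection principle gives $P({\mathcal T}'_{\delta,M} - {\mathcal T}_{\delta,M} > \lambda,\, A_1^c) = O(\delta/\sqrt{\lambda})$, which vanishes as $\delta \to 0$. For $|{\mathcal T}_{\delta,M+1} - {\mathcal T}'_{\delta,M,1}| < \lambda$, the heuristic is that both stopping times are close to the first moment after ${\mathcal T}'_{\delta,M}$ at which one additional unit of half-local-time at $0$ has been accumulated; quantitatively I would use the uniform in-probability convergence $\sup_{t\le T_0} |\sigma_\delta(t) - L^{W_\alpha}_t(0)| \to 0$ as $\delta \to 0$ (valid on $A_1^c$, where $W_\alpha$ is a scaled BM and this is standard), and then invert it using the strict growth of $L^{W_\alpha}_\cdot(0)$ around the relevant level.

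The main obstacle is converting this uniform in-probability convergence $\sigma_\delta \to L^{W_\alpha}_\cdot(0)$ into closeness of the associated inverse stopping times, since the inverse map from local time to real time is not Lipschitz in general. The resolution is that on $A_1^c$, once the shifted process has hit $0$, the scaled BM accumulates a positive amount of local time at $0$ in every short interval containing a return to $0$ (for instance via L\'evy's identity $L^{B}_\cdot(0) \overset{\text{Law}}{=} \max_{s \le \cdot} B(s)$), so a small vertical error in the local time translates to a small horizontal error in the corresponding stopping times. Assembling these pieces by choosing $M$ large first and then $\delta$ small yields $P(A_1 \cup A_2) < \eta$.
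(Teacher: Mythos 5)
Your proposal is correct in outline but follows a genuinely different route from the paper, most noticeably in the bound for $P(A_1)$. The paper observes that on $A_1$ there must be a point in $[0,\delta]$ with half local time at most $2$ at the exit time of $[I_\alpha(\mathcal{T}_{\delta,M}), S_\alpha(\mathcal{T}_{\delta,M})]$, and then controls this directly via the second Ray--Knight theorem for $B_\alpha$ together with a Ray--Knight bound on $[I_\alpha,S_\alpha]$ at $\mathcal{T}_{\delta,M}$ coming from Proposition~\ref{settingthingsup}; you instead compare two Brownian time quantities, the exit time $\Theta$ of $B_\alpha$ from $[I_\alpha,S_\alpha]$ and the time $U$ to accumulate two units of half local time at $0$ together with one unit of normalized occupation of $[0,\delta]$. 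The cleaner way to cast your idea is $A_1\subseteq\{\Theta\le U\}$, with both $\Theta$ and $U$ defined on the Brownian continuation of $W_\alpha$ from $\mathcal{T}_{\delta,M}$, rather than ``on $A_1^c$ the duration is tight'' which sounds circular when the goal is to bound $P(A_1)$. For $P(A_2'')$ your argument is a softer version of the paper's: the paper inserts the intermediate stopping times $\mathcal{T}'_{\delta,M,1\pm\delta^{1/3}}$ and bounds each of the three resulting events quantitatively via the second Ray--Knight theorem, Lemma~\ref{calc}, and Chebyshev, whereas you rely on uniform-in-probability convergence $\sigma_\delta\to L^{W_\alpha}_\cdot(0)$ followed by inversion. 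One phrasing to correct: the map from local-time level to time cannot be handled by ``strict growth of $L^{W_\alpha}_\cdot(0)$'' --- $L_\cdot(0)$ is flat during excursions, so the inverse local time is a jump process; what saves the argument is that the inverse local time is a.s.\ continuous at a \emph{fixed} level (here the level $1$), which is indeed what the paper exploits implicitly through the $\delta^{1/3}$-window construction. You should also verify that your choices yield the bound for \emph{all} $M\ge M_0$ and $\delta\in(0,\delta_0]$, not along a diagonal: this is automatic in the paper's Ray--Knight estimates, and in your time-domain version it amounts to noting that $U$ is stochastically bounded uniformly over small $\delta$, and that the Ray--Knight control on $I_\alpha(\mathcal{T}_{\delta,M})$ and $S_\alpha(\mathcal{T}_{\delta,M})$ is uniform in $\delta$. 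Modulo these points, the plan is sound; the trade-off is that the paper's version is explicit and quantitative while yours is more conceptual but requires more care with the limiting arguments.
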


\begin{proof}
We first give a bound for $P(A_1)$.  If
    $[-\sqrt{M},\sqrt{M}] \subset [I_\alpha(\mathcal{T}_{\delta,M}),
    S_\alpha(\mathcal{T}_{\delta,M})]$, then
    $W_\alpha( \mathcal{T}_{\delta,M} + \cdot)$ has the same law as the
    scaled Brownian motion $B_\alpha(\cdot)$ started at some point
    $z \in [0,\delta]$ until exiting $[-\sqrt{M},\sqrt{M}]$.  Also
    note that on the event $A_1$, at the time when the process
    $W_\alpha( \mathcal{T}_{\delta,M} + \cdot)$ exits
    $[I_\alpha(\mathcal{T}_{\delta,M}), S_\alpha(\mathcal{T}_{\delta,M}) ]$,
    there must be a point in $[0,\delta]$ such that a half of the  local time at this point does not exceed 2.  Therefore, 
  \begin{align*}
   P(A_1) &\leq P\left( \mathcal{T}_{\delta,M} < \tau^{W_\alpha}_{\sqrt{M}} \vee \sigma^{W_\alpha}_{-\sqrt{M}} \right)
 + \sup_{z \in [0,\delta]} P_z\left( \inf_{x \in [0,\delta]} L^{B_\alpha}_{\tau^{B_\alpha}_{\sqrt{M}} \wedge \sigma_{-\sqrt{M}}^{B_\alpha}} (x) \leq 2 \right) 
 \\
 &\leq 2 P\left( \frac{1}{\delta} \int_0^\delta L^{W_\alpha}_{\tau^{W_\alpha}_{\sqrt{M}}} (x) \, dx > M \right) + \sup_{z \in [-\delta,\delta]} 2  P_z\left( \inf_{x \in [-\delta,\delta]} L^{B_\alpha}_{\tau^{B_\alpha}_{\sqrt{M}}} (x) \leq 2 \right).
  \end{align*}
    Using the Ray--Knight theorems for
  BMPE and Brownian motion,
respectively, one can show that by choosing $\delta_0$ sufficiently
small and $M_0$ sufficiently large we can ensure that
$P(A_1)\le \eta/3$ for all $M\ge M_0$ and $\delta\in(0,\delta_0]$.

  Our remaining task is to
  bound $P(A_2\cap A_1^c)$. We shall start with the set
  $A_2':=\{{\mathcal T}_{\delta,M}'-{\mathcal T}_{\delta,M}\ge \lambda\}\cap
  A_1^c$. On $A_1^c$, we are simply estimating the probability that
  the process $B_\alpha$ which started at some point in $[0,\delta]$
  does not hit $0$ by time $\lambda$. Taking the worst case scenario
  $B_\alpha(0)=\delta$ and using the reflection principle for
  $B_\alpha$ we get that for all $\delta\in(0,\delta_0]$
  \[P(A_2')\le 1-2P(B_\alpha(\lambda)\ge \delta \mid B_\alpha(0)=0)\le
    \frac{2}{\sqrt{2\pi}}\int_0^{\frac{\delta}{\sqrt{\lambda (2\alpha+1)}}}e^{-x^2/2}dx\le
    \frac{\delta_0}{\sqrt{\lambda}}<\frac{\eta}{3},\] if we choose
  $\delta_0$ sufficiently small.

  We turn to the set $A_2'':=\{|{\mathcal T}_{\delta,M+1}-{\mathcal T}_{\delta,M,1}'|\ge \lambda\}\cap A_1^c$. 
Letting
  \begin{align*}
  & \{|{\mathcal T}_{\delta,M+1}-{\mathcal T}_{\delta,M,1}'|\ge \lambda\}
   = I\cup II\cup III := \\
   &\qquad 
    \{{\mathcal T}_{\delta,M+1}<{\mathcal T}_{\delta,M,1-\delta^{1/3}}'\}
   \cup\{{\mathcal T}_{\delta,M+1}>{\mathcal T}_{\delta,M,1+\delta^{1/3}}'\}
    \cup\{{\mathcal T}_{\delta,M,1+\delta^{1/3}}'-{\mathcal T}_{\delta,M,1-\delta^{1/3}}'\ge \lambda \},
  \end{align*}
we will bound $P(I\cap A_1^c)$,  $P(II\cap A_1^c)$, and  $P(III\cap A_1^c)$ separately.
  For $P(I \cap A_1^c)$,
taking into account that $\frac{1}{2\delta}\int_{{\mathcal T}_{\delta,M}}^{{\mathcal T}_{\delta,M+1}}\indf{[0,\delta]}(W_\alpha(s))\,ds=1$ we get that
\begin{align*}
  P\left(I\cap A_1^c\right)
  &\le P\left(I \cap A_1^c \cap \left\{\frac{1}{2\delta}\int_{{\mathcal T}_{\delta,M}}^{{\mathcal T}_{\delta,M}'}\indf{[0,\delta]}(W_\alpha(s))\,ds\le \frac{\delta^{1/3}}{2}\right\}\right) \\
  &\qquad + P\left( A_1^c \cap \left\{ \frac{1}{2\delta}\int_{{\mathcal T}_{\delta,M}}^{{\mathcal T}_{\delta,M}'}\indf{[0,\delta]}(W_\alpha(s))\,ds > \frac{\delta^{1/3}}{2} \right\} \right)\\
  &\le P\left( A_1^c \cap  \left\{ \frac{1}{2\delta}\int_{{\mathcal T}_{\delta,M}'}^{{\mathcal T}_{\delta,M,1-\delta^{1/3}}'}\indf{[0,\delta]}(W_\alpha(s))\,ds\ge 1-\frac{\delta^{1/3}}{2}\right\} \right) \\
  &\qquad + P\left( A_1^c \cap  \left\{ {\mathcal T}_{\delta,M}'-{\mathcal T}_{\delta,M}>\delta^{4/3}\right\} \right).
\end{align*}
The last probability is bounded by the probability that a Brownian motion $B_\alpha$ started at a point in $[0,\delta]$ doesn't hit the origin by time $\delta^{4/3}$, and thus repeating the argument giving the bound for $P(A_2')$ we get that the last probability is
less than $\delta_0^{1/3}$. 
For the next to last term, since the process $W_\alpha$ is a Brownian motion on the time interval in the integral, it follows from
the second Ray--Knight theorem for
$B_\alpha$
that this term is bounded above by
\begin{align}
&P_{1-\delta^{1/3}}\left(\frac{1}{\delta}\int_0^\delta
    Z^{(\alpha,0)}(x)\,dx>1-\frac{\delta^{1/3}}{2}\right) \label{RKbd} \\
 &\qquad \le
  P_{1-\delta^{1/3}}\left(\left|\frac{1}{\delta}\int_0^\delta
      Z^{(\alpha,0)}(x)\,dx-\left(1-\delta^{1/3}\right)\right|>\frac{\delta^{1/3}}{2}\right)\le 3 \delta^{1/3}, \nonumber
\end{align}
where in
 the last line we used Chebyshev's inequality and Lemma~\ref{calc}. Hence,
$P\left(I\cap A_1^c\right)\le 4 \delta_0^{1/3}$. The
estimate of $P\left(II\cap A_1^c\right)$ is very similar,
since
\begin{align*}
 P\left(II\cap A_1^c\right) 
& \leq P\left( A_1^c \cap \left\{ \frac{1}{2\delta}\int_{{\mathcal T}_{\delta,M}'}^{{\mathcal T}_{\delta,M,1+\delta^{1/3}}'}\indf{[0,\delta]}(W_\alpha(s))\,ds \leq  1 \right\} \right) \\
&\leq P_{1+\delta^{1/3}} \left( \frac{1}{\delta} \int_0^\delta Z^{(\alpha,0)}(x) \, dx \leq 1 \right),
\end{align*}
and then using an argument similar to the bound in \eqref{RKbd} we get
that this is at most $\frac{2}{3} \delta_0^{1/3}(1+\delta_0^{1/3})$.
Finally, note that
$ P\left(III\cap A_1^c\right) \le P\left( L^{B_\alpha}_\lambda(0) \leq
  2\delta^{1/3} \right) \to 0$ as $\delta\to 0$.
We conclude that choosing $\delta_0$ sufficiently small we can ensure that $P(A_2'')<\eta/3$.
\end{proof}

\begin{proof}[Proof of (LB)]
  Given $\epsilon>0$, we pick and fix $c>0$ so small and $K_0$
  so large that
\begin{equation}\label{z00}
E_{1-2c} \left[  \left(\int_0^1{ Z}^{(0,0)}(x)\,dx \right)^2 \wedge K_0 \right] \geq \frac53 - \frac{\epsilon}{2}.
\end{equation}
To justify this, we first take $c$ sufficiently small to ensure that
$E_{1-2c} \left[ \left( \int_0^1 Z^{(0,0)}(x)\,dx \right)^2 \right]
\geq \frac53 - \frac{\epsilon}{4}$, which can be done via scaling and
monotone convergence, and then find $K_0$ by applying the monotone
convergence theorem.

We then choose $\eta>0$ small so that for this $K_0$ the
implication \eqref{eta} holds for the functional
$F(\omega)=\left(\int_0^1\omega(s)\,ds\right)^2\wedge K_0$, $\omega\in D([0,1])$.
Next, for $c$ chosen as in \eqref{z00} we take $M\ge M_0$, where $M_0$ is as in (UB),
and such that (see Proposition \ref{lemC1})
$\text{dist}\, (P^{Z_N^{M,c}}, P_{1-2c}^{Z^{(0,0)}}) < \eta$ for all
$N\ge N(M)$.
It remains to pick $\delta\in(0,\delta_0)$, where $\delta_0$ is as in
(UB). Let
\[
 J_n=\left\{{ n\mathcal{T}}_{\delta,M}(X^{(n)}) <
{\mathcal{T}}^{\mathcal{E}}_{(M+c)\sqrt{n}} \right\}\cap \left\{ {\mathcal{T}}^{\mathcal{E}}_{(M+1-c)\sqrt{n}} < n{\mathcal{T}}_{\delta,M+1}(X^{(n)}) \right\}.
\]
By Lemma \ref{notyet}
below we can choose $\delta$ so small that for all $n=N^2$  large
$P^{X^{(n)}}(J_n)\ge 1-\epsilon/(2K_0)$.  
Finally, note that if $n=N^2$ then the definitions of the functional $G_{\delta,M}$ and the event $J_n$ imply that
\begin{equation}
  \label{GJ}
  G_{\delta,M}(X^{(n)}) \ge 
  \indf{J_{n}} \left( \frac{1}{2N^2} \sum_{k=\mathcal{T}^{\mathcal{E}}_{(M+c)N}}^{\mathcal{T}^{\mathcal{E}}_{(M+1-c)N}-1} \indf{[0,N]}(X_k) \right)
  \geq  
  \indf{J_{n}}\int_0^1 Z^{M,c}_N(x)\, dx, 
\end{equation}
Putting everything together we get that for our choice of $c,K_0,\eta,M,\delta$, all $K\ge K_0$ and all sufficiently large $n$ of the form $n=N^2$
\begin{align*}
  E^{X^{(n)}} \left[G_{\delta,M}^2\wedge K\right]-\frac{5}{3}\overset{\eqref{z00}}{\ge} &E^{X^{(n)}}\left[G_{\delta,M}^2\wedge K_0\right]-E_{1-2c} \left[  \left(\int_0^1Z^{(0,0)}(x)\,dx \right)^2 \wedge K_0 \right]-\frac{\epsilon}{2}\\ \overset{\eqref{eta}}{\ge} &E^{X^{(n)}} \left[G_{\delta,M}^2\wedge K_0\right]-E^{X^{(n)}} \left[  \left(\int_0^1Z_N^{M,c} (x)\,dx \right)^2 \wedge K_0 \right]-\frac{3\epsilon}{2}\\ \overset{\eqref{GJ}}{\ge} &-E^{X^{(n)}} \left[\indf{J_n^c}  \left(\left(\int_0^1Z_N^{M,c} (x)\,dx \right)^2 \wedge K_0 \right)\right]-\frac{3\epsilon}{2}\ge -2\epsilon.
\end{align*}
This completes the proof of (LB).
\end{proof}

\begin{lemma} \label{notyet} Given $\epsilon,c,M >0 $ there exists
  $\delta_0 >0$ such that for all $\delta\in(0,\delta_0)$ and all
  $n=N^2$ large, 
\[
P^{X^{(n)}}\left( {n\mathcal{T}}_{\delta,M}(X^{(n)}) <
{\mathcal{T}}^{\mathcal{E}}_{(M+c)\sqrt{n}}, \, {\mathcal{T}}^{\mathcal{E}}_{(M+1-c)\sqrt{n}} < n
{\mathcal{T}}_{\delta,M+1}(X^{(n)}) \right)\ge 1-\epsilon.
\]
\end{lemma}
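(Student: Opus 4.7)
The plan is to combine the generalized Ray--Knight theorem for edge local times (the first assertion of Proposition~\ref{lemC1}) with a Riemann-sum approximation that expresses the occupation time of $[0,\delta\sqrt{n}\,]$ as an integral of the scaled upcrossing profile. Write $N=\sqrt{n}$ and, for $\ell>0$, let $k_N(\ell):=\mathcal{T}^{\mathcal{E}}_{\ell N}$ denote the walk step at which the $\lfloor\ell N\rfloor$-th upcrossing of the edge $(0,1)$ is completed. The first assertion of Proposition~\ref{lemC1} gives
\[
\left(\frac{\mathcal{E}^{k_N(\ell)}(\fl{Nx})}{N}\right)_{x\ge 0}\Longrightarrow\bigl(Z^{(\alpha,1)}(x)\bigr)_{x\ge 0},\qquad Z^{(\alpha,1)}(0)=\ell.
\]

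Next we convert the occupation time of $[0,\delta\sqrt{n}\,]$ into a functional of this profile. For any nearest-neighbor path one has $\mathcal{L}(x,k)=\mathcal{E}^{k}(x-1)+\mathcal{D}^{k}(x)+O(1)$ and $|\mathcal{E}^k(x)-\mathcal{D}^k(x)|\le 1$, so summing $\mathcal{L}(x,k_N(\ell))$ over $x\in[0,\fl{\delta N}]$ together with a Riemann-sum approximation at mesh $1/N$ yields
\[
\frac{1}{2\delta n}\sum_{j=0}^{k_N(\ell)}\indf{\{X_j\in[0,\delta N]\}}=\frac{1}{\delta}\int_0^{\delta}\frac{\mathcal{E}^{k_N(\ell)}(\fl{Ny})}{N}\,dy+O\!\left(\frac{1}{N}\right).
\]
Since $Z^{(\alpha,1)}$ has continuous paths, the functional $f\mapsto\delta^{-1}\int_0^\delta f(y)\,dy$ is continuous at such $f$ on $D([0,\delta])$; the continuous mapping theorem then gives the weak limit $\delta^{-1}\int_0^\delta Z^{(\alpha,1)}(y)\,dy$, which in turn converges a.s.\ to $Z^{(\alpha,1)}(0)=\ell$ as $\delta\downarrow 0$ by path continuity at $0$.

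Applying this with $\ell=M+c$ and $\ell=M+1-c$, we first choose $\delta_0=\delta_0(c,M,\epsilon)$ so small that for each such $\ell$ the limiting integral lies within $c/2$ of $\ell$ with probability at least $1-\epsilon/4$; then, for $\delta\in(0,\delta_0)$ and $n=N^2$ sufficiently large, the weak convergence ensures that
\[
\frac{1}{2\delta n}\sum_{j=0}^{k_N(M+c)}\indf{\{X_j\in[0,\delta N]\}}>M\quad\text{and}\quad\frac{1}{2\delta n}\sum_{j=0}^{k_N(M+1-c)}\indf{\{X_j\in[0,\delta N]\}}<M+1
\]
jointly with probability at least $1-\epsilon$. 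Reading off the definition of $\mathcal{T}_{\delta,\cdot}(X^{(n)})$, the first inequality is equivalent to $n\mathcal{T}_{\delta,M}(X^{(n)})<\mathcal{T}^{\mathcal{E}}_{(M+c)\sqrt{n}}$ and the second to $\mathcal{T}^{\mathcal{E}}_{(M+1-c)\sqrt{n}}<n\mathcal{T}_{\delta,M+1}(X^{(n)})$, which is exactly the event whose probability the lemma bounds from below.

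The main technical point is justifying the continuous-mapping step in the presence of jumps in $\mathcal{E}^{k_N(\ell)}(\fl{N\cdot})$; this is routine because the limit $Z^{(\alpha,1)}$ is continuous, so Skorokhod convergence forces locally uniform convergence along a Skorokhod-representation subsequence and the integral passes through cleanly. The Riemann-sum discrepancy is of order $\delta/N$ and is absorbed into the $\epsilon/4$ safety margin, while the two values of $\ell$ are handled by a union bound on the two weak-convergence events.
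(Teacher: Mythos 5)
Your proposal is correct and takes essentially the same route as the paper. Both arguments (a) rewrite the discrete occupation time of $[0,\delta\sqrt{n}]$ up to $\mathcal{T}^{\mathcal{E}}_{\ell N}$ in terms of the upcrossing profile $x\mapsto \mathcal{E}^{\mathcal{T}^{\mathcal{E}}_{\ell N}}(x)$ (the paper contracts this to the identity $\mathcal{D}^{\mathcal{T}^{\mathcal{E}}_{NM'}}(x)=\mathcal{E}^{\mathcal{T}^{\mathcal{E}}_{NM'}}(x-1)-\indf{\{1\}}(x)$, valid because at that time the walk has just jumped from $0$ to $1$), then (b) invoke the first assertion of Proposition~\ref{lemC1} together with a continuous-mapping step localized to $[0,\delta]$, and finally (c) shrink $\delta$ using path continuity of $Z^{(\alpha,1)}$ at $0$. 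The only real difference is cosmetic: the paper dominates the occupation-time discrepancy by $\sup_{0\le x<\fl{N\delta}}\bigl|N^{-1}\mathcal{E}^{\mathcal{T}^{\mathcal{E}}_{\ell N}}(x)-\ell\bigr|$, whereas you keep the average $\delta^{-1}\int_0^\delta$; either functional is continuous at continuous limit paths, so both work. One small slip worth flagging: the inequality $|\mathcal{E}^k(x)-\mathcal{D}^k(x)|\le 1$ you state is not true in general (the number of up-jumps and down-jumps from a fixed site can differ by much more than $1$). The identity you actually need, and which makes the Riemann-sum conversion go through, is $|\mathcal{D}^k(x)-\mathcal{E}^k(x-1)|\le 1$, i.e., for a nearest-neighbor path the down-jumps from $x$ essentially coincide with the up-jumps from $x-1$; since your displayed Riemann-sum approximation is nevertheless correct, this is a misstatement rather than a gap.
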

\begin{proof}
First of all, note that for all $M',\delta>0$ and integers $N$  
\begin{align*}
 \sum_{i=0}^{\mathcal{T}^{\mathcal{E}}_{N M'}-1} \indf{[0,N\delta]}(X_i)
 &= \sum_{x=0}^{\fl{N\delta}} \left( \mathcal{D}^{\mathcal{T}^{\mathcal{E}}_{NM'}}(x) + \mathcal{E}^{\mathcal{T}^{\mathcal{E}}_{NM'}}(x) \right)  \\
 &= 2 \left( \sum_{x=0}^{\fl{N\delta}-1} \mathcal{E}^{\mathcal{T}^{\mathcal{E}}_{NM'}}(x) \right)  + \mathcal{E}^{\mathcal{T}^{\mathcal{E}}_{NM'}}(-1) + \mathcal{E}^{\mathcal{T}^{\mathcal{E}}_{NM'}}(\fl{N\delta}) - 1, 
\end{align*}
where the second equality is due to the fact that at time
  $\mathcal{T}^{\mathcal{E}}_{NM'}$ the walk has just completed a jump from 0
  to 1 and, thus, 
$\mathcal{D}^{\mathcal{T}^{\mathcal{E}}_{NM'}}(x) =
\mathcal{E}^{\mathcal{T}^{\mathcal{E}}_{NM'}}(x-1) - \indf{\{1\}}(x)$.  This
together with the bound
\[\bigg| \frac{1}{\delta N^2} \sum_{x=0}^{\fl{N\delta}-1} \left({\mathcal
  E}^{{\mathcal T}_{NM'}}(x) -NM'\right) \bigg| \leq \sup_{0\leq x < \fl{N\delta}}
\bigg|\frac{ {\mathcal E}^{{{\mathcal T}_{NM'} }}(k)}{N} - M^ \prime \bigg|\] gives
that for all large $N$
\begin{align*}
 \left\{ { n\mathcal{T}}_{\delta,M}(X^{(n)}) \geq
{\mathcal{T}}^{\mathcal{E}}_{(M+c)\sqrt{n}}  \right\} 
 &\subseteq
 \left\{ \frac{1}{2\delta N^2} \sum_{i=0}^{ \mathcal{T}^{\mathcal{E}}_{(M+c)N}-1} \indf{[0,\delta N]}(X_i) \leq M \right\} 
 \\  &\subseteq
 \left\{ \left| \frac{1}{\delta N^2} \sum_{x=0}^{\fl{N\delta}-1} \left( \mathcal{E}^{\mathcal{T}^{\mathcal{E}}_{(M+c)N}} (x) - (N(M+c) \right) \right| \geq \frac{c}{2} \right\} \\
 &\subseteq \left\{ \sup_{0\leq x <\fl{N\delta} } \left| \frac{\mathcal{E}^{\mathcal{T}^{\mathcal{E}}_{(M+c)N}} (x)}{N} - (M+c)  \right| \geq \frac{c}{2}  \right\}.
\end{align*}
It follows from Proposition \ref{lemC1} that for $\delta_0$ chosen sufficiently small the probability of the last event is less than $\epsilon /2 $ for all $N$ large.  
The event 
$\left\{ {\mathcal{T}}^{\mathcal{E}}_{(M+1-c)\sqrt{n}} \geq n {\mathcal{T}}_{\delta,M+1}(X^{(n)}) \right\}$ can be treated similarly, and we omit the details.
\end{proof}

\section{Results about generalized P\'olya urn model}\label{sec:Polya}

In this section we shall recall relevant facts about generalized
  P\'olya's urns and derive several results needed for
  proofs of Lemma~\ref{lem:bdxfin}, Lemma~\ref{lem:Edx}, and
  Proposition~\ref{lemC1} deferred from the previous two sections, thus, completing the proofs of our main results.

  The sequence of left/right steps from a fixed site can be thought of
  as being generated by a generalized P\'olya urn process.  The urn
  process is slightly different depending on whether the site is to
  the left of the origin, the right of the origin, or at the origin
  and thus, following \cite{tGRK}, we first describe the family of
  generalized P\'olya urn processes, and then later we will specialize
  to the urn processes which correspond to generating steps of the
  random walk at a fixed site.  Given sequences of positive numbers
  $\{b(i)\}_{i\geq 0}$ and $\{r(i)\}_{i\geq 0}$, the generalized
  P\'olya urn process $\{(\mathfrak{B}_n, \mathfrak{R}_n)\}_{n\geq 0}$
  is a Markov chain on $\N_0^2$ started at
  $(\mathfrak{B}_0, \mathfrak{R}_0) = (0,0)$ with transition
  probabilities given by
\begin{align*}
 P\left( (\mathfrak{B}_{n+1}, \mathfrak{R}_{n+1}) = (i+1,j) \mid (\mathfrak{B}_n, \mathfrak{R}_n) = (i,j) \right) &= \frac{b(i)}{b(i)+r(j)},\ \text{and}\\
P\left( (\mathfrak{B}_{n+1}, \mathfrak{R}_{n+1}) = (i,j+1) \mid (\mathfrak{B}_n, \mathfrak{R}_n) = (i,j) \right) &= \frac{r(j)}{b(i)+r(j)},\ \ i,j\ge 0.
\end{align*}
If we consider this as being generated by drawing red/blue balls from an urn, then $\mathfrak{B}_n$ and $\mathfrak{R}_n$ will be the numbers of blue and red balls, respectively, drawn from the urn up to time $n$.

For $* \in \{-,+,0\}$ we will let $\{(\mathfrak{B}^*_n, \mathfrak{R}^*_n)\}_{n\geq 0}$ be the generalized P\'olya urn process corresponding to the sequences $\{b^*(i)\}_{i\geq 0}$ and $\{r^*(i)\}_{i\geq 0}$ 
\[
\begin{array}{l}
 b^-(i) = w(2i) \\ 
 r^-(i) = w(2i+1)
\end{array},
\qquad 
\begin{array}{l}
 b^+(i) = w(2i+1) \\ 
 r^+(i) = w(2i)
\end{array},
\quad\text{and}\quad  
\begin{array}{l}
 b^0(i) = w(2i) \\ 
 r^0(i) = w(2i)
\end{array},
\quad \text{for } i\geq 0. 
 \]
With these choices of parameters, at any site $x$ to the left of the origin the sequence of left/right steps on successive visits to $x$ has the same distribution as the sequence of blue/red draws from the generalized P\'olya urn process $\{(\mathfrak{B}^-_n, \mathfrak{R}^-_n)\}_{n\geq 0}$. Similarly, the process $\{(\mathfrak{B}^+_n, \mathfrak{R}^+_n)\}_{n\geq 0}$ corresponds to left/right steps at sites to the right of the origin and $\{(\mathfrak{B}^0_n, \mathfrak{R}^0_n)\}_{n\geq 0}$ corresponds to left/right steps at the origin.

When considering any of the urn models described above, we will let $\tau_k^\mathfrak{B^*}$ (or $\tau_k^\mathfrak{R^*}$) denote the number of trials until a blue (or red) ball is selected for the $k$-th time. More explicitly, letting $\tau_0^\mathfrak{B^*} = \tau_0^{\mathfrak{R^*}}=0$ we have for $k\geq 1$ that 
\begin{equation}\label{taub}
\tau_k^\mathfrak{B^*} = \inf\{ n> \tau_{k-1}^\mathfrak{B^*}:  \mathfrak{B}^*_n = \mathfrak{B}^*_{n-1} + 1 \} 
\  \text{ and } \ 
\tau_k^\mathfrak{R^*} = \inf\{ n> \tau_{k-1}^{\mathfrak{R}^*}:  \mathfrak{R}^*_n = \mathfrak{R}^*_{n-1} + 1 \}. 
\end{equation}
We will also let $\D^*_n$ be signed difference in the number of red balls and blue balls drawn in the first $n$ steps of the urn process. That is, 
\begin{equation}\label{discr}
 \D^*_n = \mathfrak{R}^*_n - \mathfrak{B}^*_n. 
\end{equation}

\noindent\textbf{Rubin's construction.} It will be helpful at times to use an
equivalent construction of a generalized P\'olya urn process (due to
Rubin, see \cite{Dav90}) using exponential random variables.  Suppose that
$B_1,B_2,\ldots,R_1,R_2,\ldots$ are independent random variables with
$B_i \sim \text{Exp}(b(i-1))$ and $R_i \sim \text{Exp}(r(i-1))$, for
$i\geq 1$.  We then make a red mark on $(0,\infty)$ at
$\sum_{i=1}^k R_i$ for every $k\geq 1$ and similarly a blue mark on
$(0,\infty)$ at $\sum_{i=1}^k B_i$ for every $k\geq 1$.  The urn
process can then be constructed by reading off the sequence of red and
blue marks in order: every red mark corresponds to drawing a red ball  ($\mathfrak{R}$ increases by one) and every blue mark
corresponds to drawing a blue ball ($\mathfrak{B}$
increases by one).

\begin{lemma} \label{Lemma1} Let $w$ be as in \eqref{w} with either
  (1) $\alpha = 0$ or (2) $w(n) = (n+1)^{-\alpha}$ for some
  $\alpha> 0$. There exist constants $C,c>0$ such that for any
  $*\in\{-,+,0\}$
    \[P(|\D^*_{\tau_{n}^{\B^*}}| \ge m )\le C e^{\frac{-c m^2}{m\vee n}} \qquad \forall n,m\in \N.\]
\end{lemma}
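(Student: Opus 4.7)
The plan is to reduce Lemma~\ref{Lemma1} to a tail estimate for a sum of independent exponentials via Rubin's construction and then apply a Bernstein-type concentration inequality. Under Rubin's construction, set $U_n := \sum_{i=1}^n B_i$ and $V_k := \sum_{i=1}^k R_i$ with $B_i \sim \mathrm{Exp}(b^*(i-1))$ and $R_i \sim \mathrm{Exp}(r^*(i-1))$ all independent. The $n$-th blue mark occurs at continuous time $U_n$, and the number of red marks by then equals $\max\{k \geq 0 : V_k \leq U_n\}$, so that
\[
\{\D^*_{\tau_n^{\B^*}} \geq m\} = \{V_{n+m} \leq U_n\}, \qquad \{\D^*_{\tau_n^{\B^*}} \leq -m\} = \{V_{n-m+1} > U_n\} \text{ for } 1 \leq m \leq n.
\]
For $m > n$ the lower tail is vacuous since $\D^*_{\tau_n^{\B^*}} \geq -n$ a.s. It thus suffices to bound $P(V_{n+m} \leq U_n)$ and (for $m \leq n$) $P(V_{n-m+1} > U_n)$; each is of the form $P(S \leq 0)$ for some $S$ that is a signed sum of independent exponentials.

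Using $\E[\mathrm{Exp}(\lambda)] = 1/\lambda$ and $\Var(\mathrm{Exp}(\lambda)) = 1/\lambda^2$ together with \eqref{w}, a direct computation gives, uniformly in $* \in \{-,+,0\}$: in the asymptotically free case $\alpha = 0$ the weights $b^*(i), r^*(i)$ are bounded above and below, so $\E[V_{n+m} - U_n] = m + O(1)$, $\Var(V_{n+m} - U_n) \leq C(m+n)$, and $\max_i 1/\lambda_i \leq C$; in the polynomial case $w(n) = (n+1)^{-\alpha}$ with $\alpha > 0$, $\E[V_{n+m} - U_n] \geq c\, m(n+m)^\alpha$, $\Var(V_{n+m} - U_n) \leq C(n+m)^{2\alpha+1}$, and $\max_i 1/\lambda_i \leq C(n+m)^\alpha$. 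Analogous bounds hold for $U_n - V_{n-m+1}$ when $1 \leq m \leq n$. The Bernstein inequality for sums of independent exponentials (which follows from the MGF bound $\E[e^{tX}] = \lambda/(\lambda - t)$ for $t < \lambda$, together with $\log(\lambda/(\lambda-t)) \leq t/\lambda + (t/\lambda)^2$ for $t \leq \lambda/2$) yields
\[
P\bigl(|S - \E[S]| \geq u\bigr) \leq 2 \exp\!\left(-c \min\!\left(\frac{u^2}{\Var(S)},\, \frac{u}{\max_i 1/\lambda_i}\right)\right),
\]
and choosing $u = \E[S]$ produces the desired bound on $P(S \leq 0)$.

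A short case analysis then shows the exponent is of order $m^2/(m \vee n)$. In the asymptotically free case, $u^2/\Var(S) \asymp m^2/(m+n)$ while $u / \max_i 1/\lambda_i \asymp m$, whose minimum is of order $m^2/(m \vee n)$. In the polynomial case, for $m \leq n$ the Gaussian term $u^2/\Var(S) \asymp (m n^\alpha)^2 / n^{2\alpha+1} = m^2/n$ dominates, while for $m \geq n$ both $u^2/\Var(S)$ and $u / \max_i 1/\lambda_i$ are of order $m$; in all cases the exponent matches $c m^2/(m \vee n)$. Values of $m$ smaller than an absolute constant are handled trivially by adjusting $C$. The main obstacle is the bookkeeping in the polynomial case: although $\E[S]$, $\Var(S)$, and $\max_i 1/\lambda_i$ all grow polynomially in $n+m$ with different exponents, one has to verify that their ratios combine under Bernstein to give exactly the claimed exponent in both the Gaussian regime $m \leq n$ and the large-deviations regime $m > n$, and to check this uniformly across the three urn types $* \in \{-,+,0\}$.
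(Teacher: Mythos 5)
Your proposal is correct and follows essentially the same route as the paper: reduce to a tail bound for a signed sum of independent exponentials via Rubin's construction, then apply an exponential concentration inequality. The one place where the two treatments diverge is the polynomial case $w(n)=(n+1)^{-\alpha}$. The paper first replaces $R_i$ for $i>n$ by independent copies $R_i'\sim\mathrm{Exp}(1/(2n+1)^\alpha)$ (stochastically smaller, so the replacement only increases the probability being bounded), then divides everything by $n^\alpha$ so that the centered normalized variables have a moment generating function bounded uniformly on a fixed interval $|t|\le t_0$, after which it invokes \cite[Theorem III.15]{pSOIRV}. You skip the replacement and work with the original $R_i,B_i$ directly, tracking the growing sub-exponential parameter $\max_i 1/\lambda_i\asymp (n+m)^\alpha$ through a Bernstein inequality; the case analysis you then carry out ($m\le n$ versus $m>n$) is exactly what makes the mismatch in exponents collapse to $m^2/(m\vee n)$. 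Both are valid; the paper's replacement trick trades your extra bookkeeping for a cleaner appeal to a fixed-parameter large-deviation theorem. Two small imprecisions in your write-up that do not affect correctness: in the $\alpha=0$ case you write $\E[V_{n+m}-U_n]=m+O(1)$, whereas what one actually gets from $U_1(n+m)-V_1(n)$ is a bound of the form $\ge \delta m - C$ (the constant $\delta$ comes from the boundedness of $1/w$, not from $1/w\to 1$ uniformly); and in the polynomial case the assertion $\E[V_{n+m}-U_n]\ge c\,m(n+m)^\alpha$ fails for $m$ smaller than an $\alpha$-dependent constant because of the negative correction $\sum_{i=1}^n[(2i-1)^\alpha-(2i)^\alpha]\asymp -n^\alpha$ --- you correctly note that this is absorbed by adjusting $C$, but it is worth flagging that this is exactly the threshold $m_1$ that appears implicitly in the paper as $m\ge m_1$.
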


\begin{proof}
  Without loss of generality we take $*=+$ and drop it from the
  notation.  We will only give an upper bound for the right tail
  probabilities $P( \mathfrak{D}_{\tau_n^\mathfrak{B}} \geq m)$ since
  the left tail probabilities
  $P( \mathfrak{D}_{\tau_n^\mathfrak{B}} \leq -m)$ can be handled
  similarly.  It follows from Rubin's construction that
\begin{equation}\label{Dtn-Rubin}
 P(\D_{\tau_n^{\mathfrak{B}}} \ge m ) = P\left( \sum_{i=1}^{n+m} R_i < \sum_{i=1}^n B_i \right) = P\left(  \sum_{i=1}^n B_i - \sum_{i=1}^{n+m} R_i > 0 \right),
\end{equation}
where the exponential random variables in Rubin's construction here have distribution $R_i\sim \text{Exp}(w(2i-2))$ and $B_i \sim \text{Exp}(w(2i-1))$ for all $i\geq 1$.
We will control the probability above slightly differently in the cases $\alpha = 0$ and $\alpha > 0$, respectively.

\smallskip

\noindent\textbf{Case I: $\alpha = 0$. }
Recalling the definitions of $U_1$ and $V_1$ in \eqref{UVr}, we have that 
$E\left[\sum_{i=1}^n B_i\right] - E\left[ \sum_{i=1}^{n+m} R_i \right]
= V_1(n) - U_1(n+m)$.  Therefore, we can write
\begin{equation}\label{Dtn-Rubin2}
 P(\D_{\tau_n^{\mathfrak{B}}} \ge m ) = P\left(  \sum_{i=1}^n (B_i-E[B_i]) - \sum_{i=1}^{n+m} (R_i-E[R_i]) > U_1(n+m)-V_1(n) \right).
\end{equation}
The assumption that $\alpha = 0$ implies that the $w(i)$ are uniformly
bounded above, and thus $U_1(n+m)-U_1(n) \geq \delta m$ for some
$\delta$.  Since $\eqref{v-u}$ implies that the difference $U_1(n) - V_1(n)$ is also bounded
for all $n$, we can conclude that there exist $m_0\in\N$ such
that $U_1(n+m)-V_1(n) \geq \delta m + U_1(n)-V_1(n) \geq \delta m/2$,
for all $m\geq m_0$.  Applying this to \eqref{Dtn-Rubin2} we have
\begin{equation*}\label{Dtn-cen}
 P(\D_{\tau_n^{\mathfrak{B}}} \ge m ) \leq P\left(  \sum_{i=1}^n (B_i - E[B_i]) + \sum_{i=1}^{n+m} (E[R_i]-R_i) > \frac{\delta m}{2} \right),
 \quad \forall n\geq 1, \,  m\geq m_0.  
\end{equation*}
Since $B_i$ and $R_i$ are exponential random variables whose
parameters are uniformly bounded away from 0 and $\infty$, it follows
that there exist constants $g,t_0>0$ so that we can bound the moment
generating functions of the centered random variables by
$\sup_i \left( E[e^{t(B_i - E[B_i])}] \vee E[e^{t(E[R_i]-R_i)}]
\right) \leq e^{g t^2/2}$ for all $|t|\leq t_0$.  Therefore, by
  \cite[Theorem III.15]{pSOIRV}, there exists $c>0$ such that for all
$n\geq 1$ and $m\geq m_0$,
$P(\D_{\tau_n^{\mathfrak{B}}} \ge m ) \leq \exp \{  \frac{- c m^2}{n\vee m} \}$.
By choosing a constant $C>0$ sufficiently large we have that $P(\D_{\tau_n^{\mathfrak{B}}} \ge m ) \leq C \exp\{\frac{-cm^2}{n\vee m} \}$ for all $n,m\geq 1$. 

\smallskip

\noindent\textbf{Case II: $w(n) = (n+1)^{-\alpha}$ for some $\alpha > 0$. }
In this case, note that the exponential random variables in
\eqref{Dtn-Rubin} have means $E[R_i] = (2i-1)^\alpha$ and
$E[B_i] = (2i)^\alpha$, $i\geq 1$.  Therefore, if we let
$(R_j')_{j\geq 1}$ be a sequence of independent exponential random
variables which is also independent from  $(B_i)_{i\geq 1}$ and such
that $R_i' = R_i$ for $i\leq n$ and
$R_i' \sim \text{Exp}(1/(2n+1)^\alpha)$ for $i > n$, we have from
\eqref{Dtn-Rubin} that
\begin{align*}
& P(\D_{\tau_n^{\mathfrak{B}}} \ge m ) \leq P\left(  \sum_{i=1}^n B_i - \sum_{i=1}^{n+m} R_i' > 0 \right)\\
 &= P\left(  \sum_{i=1}^n (B_i - E[B_i]) - \sum_{i=1}^{n+m} (R_i' - E[R_i']) > \sum_{i=1}^n \left( (2i-1)^\alpha - (2i)^\alpha \right) + m (2n+1)^\alpha \right).
\end{align*}
Since $\sum_{i=1}^n \left( (2i-1)^\alpha - (2i)^\alpha \right) \sim -2^{\alpha-1} n^{\alpha}$ as $n\to\infty$, it follows that there exists $m_1\in\N$ such that for all $m\geq m_1$ and $n\geq 1$ we have
\begin{align*}
 P(\D_{\tau_n^{\mathfrak{B}}} \ge m )
 &\leq P\left(  \sum_{i=1}^n (B_i - E[B_i]) -\sum_{i=1}^{n+m} (R_i' - E[R_i']) > 2^{\alpha-1} m n^\alpha \right) \\
 &=  P\left(  \sum_{i=1}^n \frac{B_i - E[B_i]}{n^\alpha} + \sum_{i=1}^{n+m} \frac{E[R_i']-R_i'}{n^\alpha} > 2^{\alpha-1} m \right).
\end{align*}
Since we can again obtain uniform bounds on the moment generating functions of the random variables in the sum above of the form 
\[
 \sup_i \left( E\Big[e^{t\frac{B_i-E[B_i]}{n^\alpha}} \Big] \vee E\Big[e^{t\frac{E[R'_i]-R'_i}{n^\alpha}} \Big] \right) \leq e^{\frac{1}{2}g t^2}, \quad \forall t \leq |t_0|, 
\]
for some $g,t_0>0$, 
we can finish the proof just as in the case $\alpha = 0$ above by using the large deviation bounds in \cite{pSOIRV}.  
\end{proof}

\begin{remark}\label{rem:tautail}
Since $\mathfrak{R}_n + \mathfrak{B}_n = n$, it follows that $\D_n = n - 2\mathfrak{B}_n$. Replacing $n$ with $\tau_n^{\mathfrak{B}}$ we get that 
$\D_{\tau_n^\mathfrak{B}} = \tau_n^\mathfrak{B} - 2n$. 
Thus, Lemma \ref{Lemma1} gives concentration bounds on $\tau_n^{\mathfrak{B}}$ as well. Moreover, since $|D_{n+1}-D_n|=1$, we also have that
\begin{align}
  \label{lip}
  |D_{\tau_n^\mathfrak{B}}-D_{2n}|&\le |\tau_n^{\mathfrak{B}}-2n|=|D_{\tau_n^{\mathfrak{B}}}|, \shortintertext{and, thus,} \label{stn} |D_{2n}|\le |D_{2n}-D_{\tau_n^\mathfrak{B}}|&+|D_{\tau_n^\mathfrak{B}}|\le |\tau_n^{\mathfrak{B}}-2n|+|D_{\tau_n^\mathfrak{B}}|=2|D_{\tau_n^\mathfrak{B}}|.
  \end{align}
\end{remark}

\subsection{Asymptotically free case: accumulated drift at a single site}\label{sec:drift}

\begin{proof}[Proof of Lemma \ref{lem:bdxfin}]
  Since the sum in $\bar\delta_x$ depends only on the behavior of the
  walk on successive visits to the site $x$, we can analyze
  $\bar\delta_x$ using one of the generalized P\'olya urn
  processes. We shall only treat the case $x>0$ as the proofs in the
  other cases are similar.  Thus, we will only be using the urn
  process $(\mathfrak{B}^+_n,\mathfrak{R}_n^+)$.  To simplify the
  notation, we will omit the superscript $+$ throughout the proof.

  We re-write $\bar\delta_x$ as follows: 
\begin{align}\label{bdx-urn}
  \bar\delta_x  
 &= \sum_{i=0}^\infty \left| \frac{w(r_x^i)-w(\ell_x^i)}{w(r_x^i)+w(\ell_x^i)} \right| \ind{X_i=x}
  \overset{\text{Law}}{=}
\sum_{n=0}^{\infty} \left| \frac{w(2\mathfrak{R}_n)-w(2\mathfrak{B}_n+1)}{w(2\mathfrak{R}_n)+w(2\mathfrak{B}_n+1)} \right|\\ & =\sum_{i=0}^\infty \left| \frac{1}{w(2\mathfrak{R}_n)} - \frac{1}{w(2\mathfrak{B}_n+1)} \right|\left(\frac{1}{w(2\mathfrak{R}_n)} + \frac{1}{w(2\mathfrak{B}_n+1)}\right)^{-1}.\nonumber
\end{align}
Since $\alpha = 0$,
$\left(\frac{1}{w(2\mathfrak{R}_n)} +
  \frac{1}{w(2\mathfrak{B}_n+1)}\right)^{-1} \leq \frac12\sup_i
w(i)<\infty$.  Hence, it is enough to prove that
\[
 E\left[ \left( \sum_{n=0}^\infty \left| \frac{1}{w(2\mathfrak{R}_n)} - \frac{1}{w(2\mathfrak{B}_n+1)} \right| \right)^M \right] < \infty, \quad \forall M>0.  
\]
To do this, we will show that the sum inside the expectation has
tails that decay faster than any polynomial.  Letting $C_0:=\sup_i (1/w(i))\in[1,\infty)$ we get
\begin{align}
& P\left( \sum_{n=0}^\infty \left| \frac{1}{w(2\mathfrak{R}_n)} - \frac{1}{w(2\mathfrak{B}_n+1)} \right| \geq 4C_0 m \right) \nonumber \\
&\qquad \leq P\left( \tau_m^{\mathfrak{B}} \geq 3m \right) + P\left( \sum_{n\geq \tau_m^{\mathfrak{B}}}  \left| \frac{1}{w(2\mathfrak{R}_n)} - \frac{1}{w(2\mathfrak{B}_n+1)} \right| \geq C_0 m \right). \label{adtail}
\end{align}
For the first term in \eqref{adtail}, it follows from Remark
\ref{rem:tautail} and Lemma \ref{Lemma1} that
$P\left( \tau_m^{\mathfrak{B}} \geq 3m \right) = P\left(
  \mathfrak{D}_{\tau_m^\mathfrak{B}} \geq m \right) \leq C e^{-cm}$.
Thus, it remains to show that the second term in \eqref{adtail}
decreases faster than any polynomial in $m$.  To this end, we first
rewrite
\begin{align*}
 \sum_{n\geq \tau_m^{\mathfrak{B}}}  \left| \frac{1}{w(2\mathfrak{R}_n)} - \frac{1}{w(2\mathfrak{B}_n+1)} \right|
&= \sum_{i=m}^\infty \sum_{n=\tau^{\mathfrak{B}}_{i}}^{\tau^{\mathfrak{B}}_{i+1}-1} \left| \frac{1}{w(2\mathfrak{R}_n)} - \frac{1}{w(2\mathfrak{B}_n+1)} \right| \\
&= \sum_{i=m}^\infty \sum_{n=\tau^{\mathfrak{B}}_{i}}^{\tau^{\mathfrak{B}}_{i+1}-1} \left| \frac{1}{w(2n-2i)} - \frac{1}{w(2i+1)} \right|, 
\end{align*}
where the last equality follows from the fact that $\mathfrak{R}_n = n-\mathfrak{B}_n$ for all $n$ and $\mathfrak{B}_n = i$ for every $n \in [\tau^{\mathfrak{B}}_{i}, \tau^{\mathfrak{B}}_{i+1})$. 
Next, we fix a parameter $0<\kappa' < \min\{p-\frac{1}{2}, \kappa \}$. 
If $|\mathfrak{D}_{\tau_i^\mathfrak{B}}| = |\tau_i^{\mathfrak{B}} - 2i| \leq (\log i)\sqrt{i}$ and $\tau_{i+1}^\mathfrak{B} - \tau_i^\mathfrak{B} \leq i^{\kappa'}$ for all $i\geq m$ then the last sum above does not exceed
\begin{align*}
& \sum_{i=m}^\infty i^{\kappa'} \max_{|n-2i|\leq 2(\log i)\sqrt{i} }  \left| \frac{1}{w(2n-2i)} - \frac{1}{w(2i+1)} \right| \\
&\leq \sum_{i=m}^\infty i^{\kappa'} \max_{|n-2i|\leq 2(\log i)\sqrt{i} }  \left| 2^p B \left( \frac{1}{(2n-2i)^p} - \frac{1}{(2i+1)^p} \right) + 
O\left( \frac{1}{i^{\kappa+1}} \right)
\right| \\
&\leq C \sum_{i=m}^\infty i^{\kappa'} \left( \frac{\log i}{i^{p+\frac{1}{2}}} + \frac{1}{i^{1+\kappa}} \right) . 
\end{align*}
Note that by our choice of $\kappa'$, the sum in the last line is finite and thus can be made arbitrarily small by taking $m$ sufficiently large. Thus, we conclude that for all sufficiently large $m$ 
\begin{align*}
& P\left( \sum_{n\geq \tau_m^{\mathfrak{B}}}  \left| \frac{1}{w(2\mathfrak{R}_n)} - \frac{1}{w(2\mathfrak{B}_n+1)} \right| \geq C_0 m \right) \\
&\qquad \leq P\left( |\mathfrak{D}_{\tau_i^\mathfrak{B}}| > (\log i)\sqrt{i} \text{ or } \tau_{i+1}^\mathfrak{B} - \tau_i^\mathfrak{B} > i^{\kappa'}, \text{ for some } i\geq m \right) \\
&\qquad \leq \sum_{i=m}^\infty \left\{ P\left( |\mathfrak{D}_{\tau_i^\mathfrak{B}}| > (\log i)\sqrt{i} \right) + P\left( \tau_{i+1}^\mathfrak{B} - \tau_i^\mathfrak{B} > i^{\kappa'} \right) \right\}. 
\end{align*}
The first probability in the last line is bounded by
$C e^{-c(\log i)^2}$ by Lemma \ref{Lemma1}. For the last one, the
assumption $\alpha = 0$ implies that the probability of the next draw
in the urn process being a blue ball is uniformly bounded below by
some $q>0$ so that $ \tau_{i+1}^\mathfrak{B} - \tau_i^{\mathfrak{B}} $
is stochastically dominated by a Geo($q$) random variable. In
particular, this implies that
$P\left( \tau_{i+1}^\mathfrak{B} - \tau_i^{\mathfrak{B}} > i^{\kappa'}
\right) \leq C e^{-c i^{\kappa'}}$ for some $C,c>0$. Thus, we have
\[
 P\left( \sum_{n\geq \tau_m^{\mathfrak{B}}}  \left| \frac{1}{w(2\mathfrak{R}_n)} - \frac{1}{w(2\mathfrak{B}_n+1)} \right| \geq C_0 m \right) \leq C \sum_{i=m}^\infty \left( e^{-c(\log i)^2} + e^{-c i^{\kappa'}} \right). 
\]
This completes the proof of the lemma. 
\end{proof}

\begin{proof}[Proof of Lemma \ref{lem:Edx}]
  Lemma \ref{lem:bdxfin} implies that $E[\delta_x]$ is finite for all
  $x \in \Z$. From this and the symmetry considerations in Remark
  \ref{rem:sym} we conclude that $E[\delta_0] = 0$ and
  $E[\delta_x] = - E[\delta_{-x}]$ for $x \neq 0$.  Therefore, we need
  only to prove that $E[\delta_x] = \gamma$ for all $x>0$. Using the
  connection with the generalized P\'olya urn processes as in
  \eqref{bdx-urn},\footnote{Since $x>0$, we only deal with the P\'olya
    urn process $(\mathfrak{B}^+_j, \mathfrak{R}^+_j)$, and thus we
    will omit the superscripts $+$.} we see that it is enough to show
  that
\begin{equation}\label{Edx-sc}
 E\left[ \sum_{j=0}^{\infty}  \frac{ w(2\mathfrak{R}_j)-w(2\mathfrak{B}_j+1)}{w(2\mathfrak{R}_j)+w(2\mathfrak{B}_j+1)}  \right] 
= \lim_{n\to\infty}E\left[ \D_{\tau_n^\mathfrak{B}} \right] 
= \gamma. 
\end{equation}

To justify the first equality in \eqref{Edx-sc}, set $\mathcal{F}_i^{\mathfrak{B},\mathfrak{R}} := \sigma\left( (\mathfrak{B}_j, \mathfrak{R}_j), \, j\leq i \right)$ and  note that
\begin{equation}\label{Ddrift}
 E\left[ \D_{i+1}-\D_i \mid \mathcal{F}_i^{\mathfrak{B},\mathfrak{R}} \right] = \frac{ w(2\mathfrak{R}_i) - w(2\mathfrak{B}_i+1)}{ w(2\mathfrak{R}_i) + w(2\mathfrak{B}_i+1) }. 
\end{equation}
Then, by  Lemma \ref{lem:bdxfin} and the dominated convergence theorem we get
\begin{align}
 E\left[ \sum_{i=0}^{\infty} \frac{ w(2\mathfrak{R}_i) - w(2\mathfrak{B}_i+1)}{ w(2\mathfrak{R}_i) + w(2\mathfrak{B}_i+1) } \right]  
&= \lim_{n\to\infty} E\left[  \sum_{i=0}^{\tau_n^\mathfrak{B}-1} E\left[ \D_{i+1}-\D_i \mid \mathcal{F}_i^{\mathfrak{B},\mathfrak{R}} \right] \right] \nonumber \\
&= \lim_{n\to\infty} E\left[  \sum_{i=0}^{\infty} E\left[ ( \D_{i+1}-\D_i ) \ind{i < \tau_n^\mathfrak{B} } \mid \mathcal{F}_i^{\mathfrak{B},\mathfrak{R}} \right]  \right], \label{Etd1}
\end{align}
where in the last equality we used that $\{ i < \tau_n^\mathfrak{B} \} \in \mathcal{F}_i^{\mathfrak{B},\mathfrak{R}}$ for any $i,n \geq 0$. 
For the expectations in the last line, note that for any fixed $n$,
\begin{align}
 E\left[  \sum_{i=0}^{\infty} E\left[ ( \D_{i+1}-\D_i ) \ind{i < \tau_n^\mathfrak{B} } \mid \mathcal{F}_i^{\mathfrak{B},\mathfrak{R}} \right]  \right]
&= \sum_{i=0}^{\infty} E\left[   E\left[ ( \D_{i+1}-\D_i ) \ind{i < \tau_n^\mathfrak{B} } \mid \mathcal{F}_i^{\mathfrak{B},\mathfrak{R}} \right]  \right] \nonumber \\
&= \sum_{i=0}^{\infty} E\left[ (\D_{i+1}-\D_i) \ind{i < \tau_n^\mathfrak{B}} \right] \nonumber \\
&= E\left[ \sum_{i=0}^{\infty} (\D_{i+1}-\D_i) \ind{i < \tau_n^\mathfrak{B}} \right] \label{Etd2}
= E\left[ \D_{\tau_n^\mathfrak{B}} \right],  
\end{align}
where the interchange of the expectation and sum in the first and third equality is justified by the dominated convergence theorem, since $|\D_{i+1}-\D_i|=1$ and $E[\tau_n^\mathfrak{B}] < \infty$ for all $n$ (by Remark~\ref{rem:tautail}). 
Combining \eqref{Etd1} and \eqref{Etd2} proves the first equality in \eqref{Edx-sc}. 

For the second equality in \eqref{Edx-sc}, recall the definitions of $U_1(n)$ and $V_1(n)$ in \eqref{UVr}, and note that it follows from \cite[Lemma 1]{tGRK} that for all $n\geq 1$,
\[
 E\left[ U_1(\mathfrak{R}_{\tau_n^\mathfrak{B}} ) \right] 
= E\left[ U_1(\D_{\tau_n^\mathfrak{B}} +n ) \right] 
= E\left[ \sum_{i=0}^{\D_{\tau_n^\mathfrak{B}} +n - 1} \frac{1}{w(2i)} \right]  = \sum_{i=0}^{n-1} \frac{1}{w(2i+1)} = V_1(n).
\]
Subtracting $U_1(n)$ from both sides we get 
\begin{align}
 V_1(n) - U_1(n) 
&= E\left[ \sum_{i=0}^{\D_{\tau_n^\mathfrak{B}} + n - 1} \frac{1}{w(2i)}  - \sum_{i=0}^{n-1} \frac{1}{w(2i)} \right] \label{VUdiff} \\
&= E\left[ \D_{\tau_n^\mathfrak{B}}  + \sgn(\D_{\tau_n^\mathfrak{B}} ) \sum_{i= \min\{ n + \D_{\tau_n^\mathfrak{B}} , n \} }^{\max\{ n + \D_{\tau_n^\mathfrak{B}} , n \} - 1}  \left\{ \frac{1}{w(2i)} - 1 \right\}  \right]. \nonumber
\end{align}
To handle the sum inside the expectation on the right, note that since $\alpha = 0$ all the terms in the sum are uniformly bounded and that if $|\D_{\tau_n^\mathfrak{B}} | < (\log n)\sqrt{n}$ then (for $n$ large enough) all of the terms inside the sum are at most $C n^{-p}$ for some $C>0$. Thus, we can conclude that 
\[
E\left[ \sum_{i= \min\{ n + \D_{\tau_n^\mathfrak{B}} , n \} }^{\max\{ n + \D_{\tau_n^\mathfrak{B}} , n \} - 1}   \left| \frac{1}{w(2i)} - 1 \right|  \right] \leq O\left( \frac{\log n}{n^{p-\frac{1}{2}}} \right) + C E\left[ |\D_{\tau_n^\mathfrak{B}}| \ind{| \D_{\tau_n^\mathfrak{B}} | \geq (\log n)\sqrt{n} } \right], 
\]
and Lemma \ref{Lemma1} implies that the last term on the right vanishes as $n\to \infty$. 
Therefore, if $p > 1/2$ then $V_1(n) - U_1(n) = E[\D_{\tau_n^\mathfrak{B}}] + o(1)$. Taking $n\to\infty$ and using the definition of $\gamma$ in  \eqref{v-u} completes the proof of the second equality in \eqref{Edx-sc}. 
\end{proof}

\subsection{Polynomially self-repelling case}\label{psrurn}
Throughout this subsection we will be dealing only with the P\'olya
urn process $(\mathfrak{B}^+_j,\mathfrak{R}^+_j)$ and so the
superscripts $+$ will be suppressed.  Recall also that throughout this
subsection $w(n) = (n+1)^{-\alpha}$ where  $\alpha>0$.

Our main goal is to prove Proposition \ref{lemC1} using the
generalized P\'olya urns as a tool. Thus, it is important to first
explain the connection between the $\mathcal{E}$-processes in
Proposition \ref{lemC1} and the generalized P\'olya urn processes.  It
is not hard to see that for any fixed $\ell \geq 0$, the process
$\left(\mathcal{E}^{\mathcal{T}_\ell}(j) \right)_{j\geq 0}$ is a time
inhomogeneous Markov chain started from
$\mathcal{E}^{\mathcal{T}_\ell}(0)= \ell+1$ with transition
probabilities related to the discrepancy urn process $\mathfrak{D}$ as
follows.  Since at time $\mathcal{T}_\ell$ the walk has just completed
a step from $0$ to $1$, the last visit to site $1$ before time
$\mathcal{T}_\ell$ resulted in a jump to the left and this was the
$\ell$-th jump to the left from $1$. Thus,
$\mathcal{E}^{\mathcal{T}_\ell}(1)$ is equal to the number of right
steps from $1$ before the $\ell$-th left jump, and since the sequence
of left/right jumps at each site can be generated by a generalized
P\'olya urn, it follows that the increment
$ \mathcal{E}^{\mathcal{T}_\ell}(1) -
\mathcal{E}^{\mathcal{T}_\ell}(0)$ has the same distribution as
$\mathfrak{R}_{\tau_\ell^{\B}} - (\ell+1) = \D_{\tau_\ell^{\B}}-1$.
Similarly, the last visit to any site $x\geq 2$ before time
$\mathcal{T}_\ell$ was a step to the left and the number of such left
steps from $x$ is
$\mathcal{D}^{\mathcal{T}_\ell}(x) =
\mathcal{E}^{\mathcal{T}_\ell}(x-1)$.  By similar reasoning as above,
it follows that conditioned on
$\{\mathcal{E}^{\mathcal{T}_\ell}(j)=n\}$ the increment
$ \mathcal{E}^{\mathcal{T}_\ell}(j+1) -
\mathcal{E}^{\mathcal{T}_\ell}(j)$ has the same distribution as
$\D_{\tau_n^{\B}}$.  That is,
\begin{equation}\label{EdiffDtn}
 P\left( \mathcal{E}^{\mathcal{T}_\ell}(j+1) - \mathcal{E}^{\mathcal{T}_\ell}(j) = k \mid \mathcal{E}^{\mathcal{T}_\ell}(j)=n \right) 
  = P\left( \D_{\tau_n^{\B}} = k \right), \qquad j\geq 1.
\end{equation}

Proposition \ref{lemC1} concerns processes of the form
$\left(\mathcal{E}^{\mathcal{T}_{(M+1-c)N}}(j) -
  \mathcal{E}^{\mathcal{T}_{(M+c)N}}(j) \right)_{j\geq 0}$. By similar
reasoning as above, we see that the conditional distributions of the
increments of this process have the same distributions as
$\D_{\tau_m^{\B}} - \D_{\tau_n^{\B}}$ with $m$ and $n$ determined by
the conditioning. Thus, the majority of this subsection will be
devoted to obtaining good estimates on the mean and variance of
$\D_{\tau_m^{\B}} - \D_{\tau_n^{\B}}$ when $m$ and $n$ are both large.

\subsubsection{Variance estimates}

The estimates on $\Var(\D_{\tau_m^\B} - \D_{\tau_n^\B})$ that we will
need for the proof of Proposition \ref{lemC1} will be given in
Proposition \ref{VarDtmDtn}, but since we will take a somewhat winding
path to prove this, it is helpful to give the reader an outline of
where we are headed.  First of all, instead of analyzing differences
of the discrepancy process $\D$ at random stopping times, we will use
a martingale approximation to obtain estimates on differences of the
discrepancy process at deterministic times. This is accomplished in
Lemma \ref{lemkey-var} and Corollaries
\ref{cor-VarDnlim}--\ref{cor-VDn}.  The next task is then in
justifying the change from the discrepancy process at the stopping
times $\tau_n^\B$ to the deterministic time $2n$. In particular, in
Lemma \ref{DtnD2n-diff} we will obtain estimates on
$E[(\D_{\tau_n^\B} - \D_{2n} )^2 ]$, and combining this with the
previous results we obtain the asymptotics of
$\Var(\D_{\tau_m^\B} - \D_{\tau_n^\B} )$ in Proposition
\ref{VarDtmDtn}.

 Obtaining estimates on the variance of $\D_m-\D_n$ directly is
 not optimal because the increments
 $\D_{i+1}-\D_i$ are too strongly correlated. On the other hand, as
 can be seen from the proof of the next result, for $\delta>0$ fixed
 and $n$ large the process
 $\left(\frac{\D_{\fl{tn}}}{\sqrt{n}} t^\alpha \right)_{t\geq \delta}$
 is approximately a martingale, and this observation underlies the
 following lemma.

 \begin{lemma} \label{lemkey-var} For $\delta\in (0,2)$ fixed, there
   exists a constant $C_\delta$ such that for $n$ sufficiently large
   and $\delta n \leq k \leq m \leq 2n$ we have
\[
\left| \Var\left( \D_m \left( \frac{m}{n} \right)^\alpha - \D_k \left( \frac{k}{n}\right)^\alpha \right) - n \int_{k/n}^{m/n} u^{2\alpha} \, du \right| 
\leq C_\delta \sqrt{n}\log^2 n. 
\]
\end{lemma}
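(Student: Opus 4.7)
The plan is to exploit the approximate martingale property of $Y_j := \D_j(j/n)^\alpha$ on the window $\delta n\le j\le 2n$, as suggested in the paragraph preceding the statement. Since $\mathfrak{R}_j+\mathfrak{B}_j=j$, equation~\eqref{Ddrift} with $w(n)=(n+1)^{-\alpha}$ rewrites as
\[
f_j := E[\D_{j+1}-\D_j\mid \mathcal{F}_j^{\mathfrak{B},\mathfrak{R}}] = \frac{(1-\xi_j)^{-\alpha}-(1+\xi_j)^{-\alpha}}{(1-\xi_j)^{-\alpha}+(1+\xi_j)^{-\alpha}},
\qquad \xi_j := \frac{\frac12-\D_j}{j+\frac32}.
\]
A Taylor expansion around $\xi_j=0$ gives $f_j = \alpha\xi_j+O(\xi_j^3) = -\alpha\D_j/j+\alpha/(2j)+O(|\D_j|/j^2+|\D_j|^3/j^3)$, and combining this with $((j+1)/n)^\alpha=(j/n)^\alpha\bigl(1+\alpha/j+O(1/j^2)\bigr)$ produces a crucial cancellation of the $\D_j/j$ contributions in the drift of $Y_j$, leaving
\[
A_j:=E[Y_{j+1}-Y_j\mid \mathcal{F}_j^{\mathfrak{B},\mathfrak{R}}] = \frac{\alpha j^{\alpha-1}}{2n^\alpha}+O\!\Bigl(\tfrac{|\D_j|j^{\alpha-2}}{n^\alpha}+\tfrac{|\D_j|^3 j^{\alpha-3}}{n^\alpha}\Bigr).
\]
Write the associated Doob decomposition $Y_j=\tilde M_j+\sum_{i<j}A_i$ with $\tilde M$ a martingale.

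By orthogonality of martingale differences and the identity $(\D_{j+1}-\D_j)^2\equiv 1$,
\[
E\bigl[(\tilde M_m-\tilde M_k)^2\bigr] = \sum_{j=k}^{m-1}\Bigl(\tfrac{j+1}{n}\Bigr)^{2\alpha}\bigl(1-E[f_j^2]\bigr).
\]
Lemma~\ref{Lemma1} together with Remark~\ref{rem:tautail} yields $E[|\D_j|^p]\le C_p j^{p/2}$ for every $p\ge 1$, so $E[f_j^2]=O(1/j)$; a Riemann-sum comparison (using $\delta n\le k\le m\le 2n$) then gives
\[
E\bigl[(\tilde M_m-\tilde M_k)^2\bigr] = n\int_{k/n}^{m/n} u^{2\alpha}\,du + O_\delta(1).
\]

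It remains to absorb $\sum A_j$ into the variance. The deterministic part of $A_j$ does not contribute to the variance, while the random part has $L^2$-norm $O(j^{\alpha-3/2}/n^\alpha)$ (using $\|\D_j\|_{L^2}\le C\sqrt j$), so Minkowski gives $\bigl\|\sum_{j=k}^{m-1}(A_j-E[A_j])\bigr\|_{L^2}=O_\delta(n^{-1/2})$, whence $\Var\bigl(\sum A_j\bigr)=O_\delta(1/n)$ and by Cauchy--Schwarz $\bigl|\Cov\bigl(\tilde M_m-\tilde M_k,\sum A_j\bigr)\bigr|\le \|\tilde M_m-\tilde M_k\|_{L^2}\cdot O_\delta(n^{-1/2}) = O_\delta(1)$. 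This produces an error of order $O_\delta(1)$, even sharper than the claimed bound. The main technical obstacle is that the Taylor expansion of $f_j$ is only pointwise valid on $\{|\D_j|\ll j\}$; on the complementary event one must fall back on the trivial bound $|f_j|\le 1$. Restricting to the high-probability event $\bigcap_{j\in[\delta n,2n]}\{|\D_j|\le\sqrt j\log j\}$, whose complement has probability $O(n\exp(-c\log^2 n))$ by Lemma~\ref{Lemma1}, is the natural way to make the expansions rigorous, and the $\log^2 n$ factor in the stated bound is an artifact of working with this truncation level.
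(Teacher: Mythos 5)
Your proof is correct, and it follows the same overall strategy as the paper (exploit the approximate martingale $Y_j=\D_j(j/n)^\alpha$, compute the martingale quadratic variation exactly, and argue the drift contribution is small), but your bookkeeping of the drift sum is genuinely tighter. The paper's proof (their decomposition \eqref{mgd} and bounds \eqref{vart1}--\eqref{vart3}) separates the drift into a term involving $\D_i$ times a tiny deterministic coefficient and an $\epsilon_i^n$ term, and then bounds $\Var(\sum\cdots)$ by $E[(\sum|\cdots|)^2]$, using worst-case $\max_i\D_i^2$ and $\max_i\D_i^4$ estimates; this discards the mean and yields $O_\delta(\operatorname{polylog} n)$ for the drift variance and hence $O_\delta(\sqrt n\log^2n)$ after Cauchy--Schwarz against the martingale term. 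You instead isolate the exact cancellation of the $-\alpha\D_j/j$ and $+\alpha\D_j/j$ contributions so the drift increment $A_j$ equals a purely deterministic $\frac{\alpha}{2j}(j/n)^\alpha$ plus a genuinely small random remainder $R_j$, then sum $\|R_j\|_{L^2}=O_\delta(j^{\alpha-3/2}/n^\alpha)$ by Minkowski to get $\sqrt{\Var(\sum A_j)}=O_\delta(n^{-1/2})$ and a cross term of $O_\delta(1)$. This gives the sharper error $O_\delta(1)$, which is more than enough. Two small remarks: you should also cite that $\||\D_j|^3\|_{L^2}=O(j^{3/2})$ (i.e.\ $E[|\D_j|^6]=O(j^3)$, which follows exactly as $E[|\D_j|^2]=O(j)$ from Lemma~\ref{Lemma1} and \eqref{stn}) for the cubic part of $R_j$; and your closing speculation that the paper's $\log^2 n$ is ``an artifact of the truncation level'' isn't right --- your own truncation at $\sqrt j\log j$ leads to an $O_\delta(1)$ error, so the $\log^2n$ in the stated bound is simply slack introduced by the paper's cruder max-based estimate of the drift variance.
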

\begin{proof}
  Throughout the proof we will let $C_\delta$ denote a constant which
  depends on $\delta$ but not on $n$ and which can change from line to
  line.  Let
  $ \Delta_i = (\D_{i+1} - \D_{i}) - E\left[ \D_{i+1} - \D_{i} \mid
    \mathcal{F}^{\mathfrak{B,R}}_i \right]$. Note that by
  \eqref{Ddrift} and our choice of $w$,
\begin{align}
 E\left[ \D_{i+1} - \D_{i} \mid \mathcal{F}^{\mathfrak{B,R}}_i \right]
&=\frac{(2\mathfrak{B}_i+2)^\alpha - (2\mathfrak{R}_i+1)^\alpha}{(2\mathfrak{B}_i+2)^\alpha + (2\mathfrak{R}_i+1)^\alpha}
  \nonumber \\
&= \frac{(i-\D_i+2)^\alpha -
  (i+\D_i+1)^\alpha }{(i-\D_i+2)^\alpha + (i+\D_i+1)^\alpha }
= \frac{\left(1-\frac{\D_i-2}{i}\right)^\alpha - \left(1+\frac{\D_i+1}{i}\right)^\alpha
}{\left(1-\frac{\D_i-2}{i}\right)^\alpha + \left(1+\frac{\D_i+ 1}{i}\right)^\alpha } \nonumber\\ 
&=: - \tfrac{\alpha\, \D_i}{i} + \epsilon^n_i, 
\qquad \text{where } |\epsilon^n_i|\leq C\,\tfrac{\D_i^2 + i}{i^2}. 
 \label{EDform}
\end{align}
Therefore, we have $\D_{i+1}= \D_i +  \Delta_i - \frac{\alpha\, \D_i}{i}+\epsilon^n_i$ and
\begin{align*}
 \D_{i+1}\left(\tfrac{i+1}{n}\right)^\alpha - \D_i \left( \tfrac{i}{n} \right)^\alpha 
 &= \left( \D_i +  \Delta_i - \tfrac{\alpha \D_i}{i} + \epsilon^n_i \right)   \left( \tfrac{i+1}{n}\right)^\alpha - \D_{i} \left( \tfrac{i}{n} \right)^\alpha\\
 &= \Delta_i \left( \tfrac{i+1}{n} \right)^{\alpha} 
+ \D_i \left\{ \left(\tfrac{i+1}{n}\right)^\alpha\left( 1 -\tfrac{\alpha}{i} \right) -  \left( \tfrac{i}{n} \right)^\alpha \right\} 
+ \epsilon^n_i \left(\tfrac{i+1}{n}\right)^\alpha. 
\end{align*}
Summing over $i \in [k,m)$ gives 
\begin{align}
& \D_m \left( \tfrac{m}{n} \right)^\alpha - \D_k \left( \tfrac{k}{n}\right)^\alpha \nonumber \\
&\qquad = \sum_{i=k}^{m-1} \left\{ \Delta_i \left( \tfrac{i+1}{n} \right)^{\alpha} 
+ \D_i \left( \left(\tfrac{i+1}{n}\right)^\alpha\left( 1 -\tfrac{\alpha}{i} \right) -  \left( \tfrac{i}{n} \right)^\alpha \right) 
+ \epsilon^n_i \left(\tfrac{i+1}{n}\right)^\alpha \right\}. \label{mgd}
\end{align}
To get asymptotics on the variance of this sum, we will first get good bounds on the variances of the sums of the three terms inside the braces separately. That is, we will show that 
\begin{align}
 &\max_{\delta n \leq k \leq m \leq 2n} \left| \Var\left( \sum_{i=k}^{m-1} \Delta_i \left( \frac{i+1}{n} \right)^{\alpha} \right) - n \int_{k/n}^{m/n} u^{2\alpha} \, du \right|
 \leq C_\delta n^{1/5}, \label{vart1}\\
 &\max_{\delta n \leq k \leq m \leq 2n} \Var\left( \sum_{i=k}^{m-1} \D_i \left( \left(\frac{i+1}{n}\right)^\alpha\left( 1 -\frac{\alpha}{i} \right) -  \left( \frac{i}{n} \right)^\alpha \right)  \right) \leq C_\delta (\log^2 n)/n, \label{vart2} \\
 &\max_{\delta n \leq k \leq m \leq 2n} \Var\left( \sum_{i=k}^{m-1} \epsilon_i^n \left( \frac{i+1}{n} \right)^\alpha \right) \leq C_\delta\log^4 n.  \label{vart3}
\end{align}
To see that these bounds are enough to finish the proof, first note
that by expanding the variance of \eqref{mgd} into variance and
covariance terms, bounding the covariance terms by products
  of square roots of the variances, and using the variance bounds in
\eqref{vart2} and \eqref{vart3}, one obtains for $n$ sufficiently
large and $\delta n \leq k \leq m \leq 2n$ that
\begin{multline*}
\left| \Var \left( \D_m \left( \frac{m}{n} \right)^\alpha - \D_k \left( \frac{k}{n}\right)^\alpha \right) - \Var\left( \sum_{i=k}^{m-1} \Delta_i \left( \frac{i+1}{n} \right)^\alpha \right) \right| \\
 \leq C_\delta\log^4 n+ C_\delta\log^2 n \sqrt{ \Var\left( \sum_{i=k}^{m-1} \Delta_i \left( \frac{i+1}{n} \right)^\alpha \right) }.  
\end{multline*}
The proof is then finished by using the variance bounds in \eqref{vart1}. 

It remains now to prove the variance bounds in \eqref{vart1}--\eqref{vart3}. 

\noindent\textbf{Proof of \eqref{vart1}:} 
Since the terms $\Delta_i \left( \frac{i+1}{n} \right)^{\alpha}$ form the increments of a martingale we have that 
\begin{align*}
\Var\left( \sum_{i=k}^{m-1} \Delta_i \left( \frac{i+1}{n} \right)^{\alpha} \right) 
&= \sum_{i=k}^{m-1} \left( \frac{i+1}{n} \right)^{2\alpha} E\left[ \Delta_i^2 \right] \\
&= \sum_{i=k}^{m-1} \left( \frac{i+1}{n} \right)^{2\alpha} \left( 1 - E\left[ E\left[\D_{i+1}-\D_i \mid \mathcal{F}_i^{\B,\mathfrak{R}} \right]^2 \right] \right). 
\end{align*}
Note that an integral approximation shows that
$\sum_{i=k}^{m-1} \left( \frac{i+1}{n} \right)^{2\alpha} = n
\int_{k/n}^{m/n} u^{2\alpha} \, du + O(1)$, where the $O(1)$ error
term is uniform over all $\delta n \leq k \leq m \leq 2n$. To finish
the proof of \eqref{vart1} we need to get bounds on
$E\left[ E\left[\D_{i+1}-\D_i \mid \mathcal{F}_i^{\B,\mathfrak{R}}
  \right]^2 \right]$ that are uniform over $i \in [\delta n, 2n]$.
Recalling \eqref{EDform}, we conclude that there exists a constant
$C_\delta$ such that if $i \in [\delta n, 2n]$ and
$|\D_i| \leq n^{3/5}$ then
$E\left[ \D_{i+1}-\D_i \mid \mathcal{F}_i^{\B,\mathfrak{R}} \right]
\leq C_\delta n^{-2/5}$. Then, since $|\D_{i+1}-\D_i| = 1$, we get
\[
\max_{i \in [\delta n, 2n]} E\left[ E\left[\D_{i+1}-\D_i \mid \mathcal{F}_i^{\B,\mathfrak{R}} \right]^2 \right]
\leq \max_{i \in [\delta n, 2n]} P\left( |\D_i| > n^{3/5} \right)+ C^2_\delta n^{-4/5}.
\]
An application of \eqref{stn} and Lemma \ref{Lemma1} completes the proof of
  \eqref{vart1}.

\noindent\textbf{Proof of \eqref{vart2}:} 
Since it is easy to see that $\max_{\fl{\delta n}\leq i < 2n} \left| \left(\frac{i+1}{n}\right)^\alpha\left( 1 -\frac{\alpha}{i} \right) -  \left( \frac{i}{n} \right)^\alpha \right| \leq \frac{C_\delta}{n^2}$, we have
\begin{align*}
&\max_{\delta n \leq k < m \leq 2n} \Var\left( \sum_{i=k}^{m-1} \D_i \left( \left(\frac{i+1}{n}\right)^\alpha\left( 1 -\frac{\alpha}{i} \right) -  \left( \frac{i}{n} \right)^\alpha \right)  \right)  \\
&\qquad\leq \max_{\delta n \leq k < m \leq 2n} E\left[ \left( \sum_{i=k}^{m-1} \D_i \left( \left(\frac{i+1}{n}\right)^\alpha\left( 1 -\frac{\alpha}{i} \right) -  \left( \frac{i}{n} \right)^\alpha \right) \right)^2 \right] \\ &\qquad \leq C_\delta\,n^{-2} E\left[ \max_{i \in [\delta n, 2n]} \D_i^2 \right],   
\end{align*}
and \eqref{vart2} then follows from this, \eqref{stn}, and the tail
bounds in Lemma \ref{Lemma1}.

\noindent\textbf{Proof of \eqref{vart3}:} 
Recalling that $|\epsilon_i^n| \leq C (\D_i^2 + i)/i^2$ for all $i$ we have that 
\begin{align}
\max_{\delta n \leq k < m \leq 2n}  \Var\left( \sum_{i=k}^{m-1} \epsilon^n_i \left(\frac{i+1}{n}\right)^\alpha \right)
&\leq \max_{\delta n \leq k < m \leq 2n} E\left[ \left(  \sum_{i=k}^{m-1} |\epsilon^n_i| \left(\frac{i+1}{n}\right)^\alpha \right)^2 \right]  \nonumber \\
&\leq C_\delta \left( n^{-2}E\left[ \max_{\delta n \leq i < 2n} \D_i^4 \right] + 1 \right),   \nonumber
\end{align}
and \eqref{vart3} then follows from this, \eqref{stn}, and the tail bounds in Lemma \ref{Lemma1}. 
\end{proof}

Naively taking $k=0$ and $m=n$ in Lemma \ref{lemkey-var} suggests the approximation $\Var(\D_n) \approx n \int_0^1 u^{2\alpha} \, du = \frac{n}{2\alpha+1}$. While this does not follow directly from Lemma \ref{lemkey-var} due to the requirement that $k\geq \delta n$, the following corollary shows that this approximation is, in fact, valid. 
\begin{corollary}\label{cor-VarDnlim}
$\displaystyle\lim\limits_{n\to\infty} \frac{\Var(\D_n)}{n} = \dfrac{1}{2\alpha+1}$. 
\end{corollary}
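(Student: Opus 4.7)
The plan is to apply Lemma \ref{lemkey-var} with $m = n$ and $k = \lfloor \delta n \rfloor$, and then take $\delta$ to $0$ at the very end. For fixed $\delta \in (0, 1)$, the lemma gives
\[
\Var\bigl(\D_n - \D_{\lfloor \delta n \rfloor}(\lfloor \delta n\rfloor/n)^\alpha\bigr) = n\int_{\lfloor \delta n\rfloor/n}^{1} u^{2\alpha}\,du + O_\delta(\sqrt{n}\log^2 n) = \frac{n(1-\delta^{2\alpha+1})}{2\alpha+1} + o(n).
\]
So dividing by $n$ and sending $n \to \infty$ yields the limit $(1-\delta^{2\alpha+1})/(2\alpha+1)$.

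The next step is to compare $\Var(\D_n)$ with $\Var(\D_n - \D_k (k/n)^\alpha)$, where $k = \lfloor \delta n \rfloor$. Since the standard deviation satisfies the triangle inequality, I will use
\[
\bigl|\sqrt{\Var(\D_n)} - \sqrt{\Var(\D_n - \D_k(k/n)^\alpha)}\bigr| \le \sqrt{\Var(\D_k(k/n)^\alpha)} = (k/n)^\alpha \sqrt{\Var(\D_k)}.
\]
To bound $\Var(\D_k)$, I will use the estimate \eqref{stn}, namely $|\D_{2\ell}| \le 2|\D_{\tau_\ell^\B}|$, together with the Gaussian-type tail in Lemma \ref{Lemma1}: integrating $P(|\D_{\tau_\ell^\B}|>t) \le C e^{-ct^2/(t\vee \ell)}$ gives $E[\D_{2\ell}^2] = O(\ell)$, and then $\Var(\D_k) \le E[\D_k^2] = O(k) = O(\delta n)$ (with a universal constant, using $|\D_{k}-\D_{k-1}|=1$ to pass from even to general indices). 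Consequently $(k/n)^\alpha \sqrt{\Var(\D_k)} \le C\,\delta^{\alpha+1/2}\sqrt{n}$.

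Putting the two estimates together, for every fixed $\delta \in (0,1)$,
\[
\Bigl|\sqrt{\Var(\D_n)/n} - \sqrt{(1-\delta^{2\alpha+1})/(2\alpha+1)}\Bigr| \le C\,\delta^{\alpha+1/2} + o(1) \quad \text{as } n \to \infty,
\]
so $\limsup_n$ and $\liminf_n$ of $\sqrt{\Var(\D_n)/n}$ both lie within $C\delta^{\alpha+1/2}$ of $\sqrt{(1-\delta^{2\alpha+1})/(2\alpha+1)}$. Because $\alpha > 0$, both $\delta^{\alpha+1/2}$ and $\delta^{2\alpha+1}$ tend to $0$ as $\delta \downarrow 0$, and the two one-sided limits coincide with $1/\sqrt{2\alpha+1}$. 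Squaring gives the corollary. The only substantive obstacle is the crude variance bound on $\D_k$; the lemma itself does not apply for $k$ of order smaller than $\delta n$, which is precisely why the $a$ priori tail bound from Lemma \ref{Lemma1} must be invoked, and why the dependence on $\delta$ in that bound needs to be $o(\sqrt{n})$-tight (yielding the exponent $\alpha+1/2 > 0$).
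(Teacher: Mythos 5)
Your proof is correct and takes essentially the same route as the paper: both fix $\delta>0$, decompose about $k=\lfloor\delta n\rfloor$ so that Lemma \ref{lemkey-var} applies, bound $\Var(\D_{\lfloor\delta n\rfloor})\le C\delta n$ via \eqref{stn} and Lemma \ref{Lemma1}, and then send $n\to\infty$ followed by $\delta\to0$. The only cosmetic difference is that you work with standard deviations and the $L^2$ triangle inequality, whereas the paper expands $\Var(\D_n)$ into variance and covariance pieces and bounds the cross term by Cauchy--Schwarz; the two give the same $O(\delta^{\alpha+1/2})$ error. One tiny remark: you invoke $\alpha>0$ to ensure $\delta^{\alpha+1/2}\to 0$, but the exponent $\alpha+\tfrac12$ is positive already for $\alpha=0$, so nothing special is needed there.
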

\begin{proof}
First of all, for any fixed $\delta>0$ note that 
\begin{align*}
\Var(\D_n) &= \Var\left( \D_n - \D_{\fl{\delta n}} \left(\fl{\delta n}/n \right)^\alpha \right) + \left( \fl{\delta n}/n \right)^{2\alpha} \Var(\D_{\fl{\delta n}} ) \\
&\qquad + 2 \Cov\left( \D_n - \D_{\fl{\delta n}} \left( \fl{\delta n}/n \right)^\alpha, \D_{\fl{\delta n}} \left( \fl{\delta n}/n \right)^\alpha \right). 
\end{align*}
Note that it follows from \eqref{stn} and Lemma \ref{Lemma1} that
there is a constant $C>0$ (not depending on $\delta$ or $n$) such that
$\Var(\D_{\fl{\delta n}}) \leq C \delta n$. By Lemma \ref{lemkey-var},  there
  is a constant $C_\delta>0$ such that for $n$ sufficiently large
\begin{align*}
\left|  \frac{\Var(\D_n)}{n}\right. &\left.- \int_0^1 u^{2\alpha} \, du \right|
\leq \left| \frac{\Var\left( \D_n - \D_{\fl{\delta n}} \left( \fl{\delta n}/n \right)^\alpha \right)}{n} - \int_\delta^1 u^{2\alpha} \, du \right| + \int_0^\delta u^{2\alpha} \, du \\ &+ C \delta^{2\alpha+1}  
+ 2n^{-1} \sqrt{\Var\left( \D_n - \D_{\fl{\delta n}} \left( \fl{\delta n}/n \right)^\alpha \right)}\sqrt{\Var\left(\D_{\fl{\delta n}} \left( \fl{\delta n}/n \right)^\alpha \right)} \\
&\leq C_\delta \,n^{-1/2}\log^2 n + C \delta^{2\alpha+1} + 2 \sqrt{ C + C_\delta n^{-1/2}\log^2 n} \, \sqrt{ 2C \delta^{2\alpha+1} }. 
\end{align*}
Taking $n\to\infty$ and then $\delta \to 0$ then completes the proof. 
\end{proof}

Finally, combining Lemma \ref{lemkey-var} and Corollary \ref{cor-VarDnlim} yields the following bounds on $\Var(\D_m-\D_n)$. 
\begin{corollary}\label{cor-VDn}
There exists a constant $C>0$ such that for all $n$ sufficiently large and $m \in [n,2n]$, 
\begin{equation}\label{VDmDn}
\left| \Var(\D_m - \D_n) - (m-n) \right| \leq   C\sqrt{n}\log^2 n  +  C (m-n)^2/n.
\end{equation}
\end{corollary}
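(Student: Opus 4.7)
The plan is to apply Lemma~\ref{lemkey-var} with $\delta = 1$ and $k = n$ to control $\Var((m/n)^\alpha \D_m - \D_n)$, and then to transfer that estimate to $\Var(\D_m - \D_n)$ using that the factor $(m/n)^\alpha$ differs from $1$ by only $O((m-n)/n)$ on $m \in [n, 2n]$. Writing $A := \Var((m/n)^\alpha \D_m - \D_n)$, Lemma~\ref{lemkey-var} together with the Taylor expansion $n\int_1^{m/n} u^{2\alpha}\,du = (m-n) + O((m-n)^2/n)$ gives
\[
A = (m-n) + O\bigl((m-n)^2/n + \sqrt{n}\log^2 n\bigr).
\]

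Next, decompose $(m/n)^\alpha \D_m - \D_n = (\D_m - \D_n) + \nu \D_m$ with $\nu := (m/n)^\alpha - 1 = O((m-n)/n)$, and set $V := \Var(\D_m - \D_n)$. Expanding the variance and using the identity $\Cov(\D_m - \D_n, \D_m) = V + \Cov(\D_m - \D_n, \D_n)$ (which follows from $\D_m = (\D_m - \D_n) + \D_n$) yields
\[
A = V(1 + 2\nu) + 2\nu\,\Cov(\D_m - \D_n, \D_n) + \nu^2 \Var(\D_m).
\]
Since $\Var(\D_m) = O(n)$ (a consequence of \eqref{stn} and Lemma~\ref{Lemma1}), the last term is $O((m-n)^2/n)$. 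The key step is then to show $\Cov(\D_m - \D_n, \D_n) = O(m-n)$, which would make the middle term $O((m-n)^2/n)$ as well.

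For this I would reuse the martingale/drift decomposition from the proof of Lemma~\ref{lemkey-var}: $\D_m - \D_n = M_{n,m} + D_{n,m}$, where $M_{n,m} := \sum_{i=n}^{m-1} \Delta_i$ is a martingale with respect to $\{\mathcal{F}_i^{\mathfrak{B},\mathfrak{R}}\}$ and $D_{n,m} := \sum_{i=n}^{m-1}(-\alpha \D_i/i + \epsilon_i^n)$ is the drift. Because $\D_n$ is $\mathcal{F}_n^{\mathfrak{B},\mathfrak{R}}$-measurable, $\Cov(M_{n,m}, \D_n) = 0$, so $\Cov(\D_m - \D_n, \D_n) = \Cov(D_{n,m}, \D_n)$. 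Using $E[(-\alpha \D_i/i + \epsilon_i^n)^2] = O(1/i)$ (a consequence of \eqref{EDform}, \eqref{stn}, and Lemma~\ref{Lemma1}), Cauchy--Schwarz on the sum bounds $\Var(D_{n,m}) = O((m-n)^2/n)$, and a further Cauchy--Schwarz against $\Var(\D_n) = O(n)$ delivers $|\Cov(D_{n,m}, \D_n)| = O(m-n)$.

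Putting everything together, $A = V(1 + 2\nu) + O((m-n)^2/n)$. Solving for $V$ and noting that $\nu A = O((m-n)^2/n) + O((m-n)\log^2 n/\sqrt{n})$ with $(m-n)\log^2 n/\sqrt{n} \leq \sqrt{n}\log^2 n$ for $m - n \leq n$ gives $|V - A| = O((m-n)^2/n + \sqrt{n}\log^2 n)$; the claimed bound then follows from $|V - (m-n)| \leq |V - A| + |A - (m-n)|$. The main obstacle to be wary of is that the naive bound $|\Cov(\D_m - \D_n, \D_m)| \leq \sqrt{V \cdot \Var(\D_m)}$ would introduce an $(m-n)^{3/2}/\sqrt{n}$ error that is not uniformly dominated by $C\sqrt{n}\log^2 n + C(m-n)^2/n$ throughout $m - n \in [0, n]$; the rewriting $\Cov(\D_m - \D_n, \D_m) = V + \Cov(\D_m - \D_n, \D_n)$ circumvents this by ensuring the remaining covariance only enters multiplied by $\nu$.
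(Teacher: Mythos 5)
Your proof is correct, and it takes a genuinely different algebraic route from the paper's, although it relies on the same toolbox (Lemma~\ref{lemkey-var}, the martingale decomposition $\D_m - \D_n = M_{n,m} + D_{n,m}$, and the tail bounds from Lemma~\ref{Lemma1} via \eqref{stn}). The paper writes $\D_m - \D_n = (m/n)^{-\alpha}\bigl(\D_m (m/n)^\alpha - \D_n\bigr) + \bigl((m/n)^{-\alpha}-1\bigr)\D_n$ and expands the variance of the left side directly into two variance terms plus a cross covariance; it then invokes Corollary~\ref{cor-VarDnlim} to pin down $\Var(\D_n)\approx n/(2\alpha+1)$ precisely so that the two variance contributions combine, after a Taylor expansion, into $(m-n)+O\bigl((m-n)^2/n\bigr)$, and it disposes of the covariance $\Cov\bigl(\D_m(m/n)^\alpha - \D_n,\, \D_n\bigr)$ by appealing to the variance bounds \eqref{vart2}--\eqref{vart3}, yielding $O(\sqrt{n}\log^2 n)$. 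You instead expand $A:=\Var\bigl((m/n)^\alpha\D_m - \D_n\bigr)$ in terms of the target quantity $V=\Var(\D_m-\D_n)$ and then solve, so you need only the crude bound $\Var(\D_m)=O(n)$ rather than the exact constant; the price is that you must control $\Cov(\D_m-\D_n,\D_n)$ directly, and your estimate $\Var(D_{n,m}) = O\bigl((m-n)^2/n\bigr)$ via $E[(-\alpha\D_i/i+\epsilon_i^n)^2]=O(1/i)$ and Cauchy--Schwarz on the sum is actually a slightly sharper control on the drift than the paper extracts. Your observation that rewriting $\Cov(\D_m-\D_n,\D_m)=V+\Cov(\D_m-\D_n,\D_n)$ is what avoids the dangerous $(m-n)^{3/2}/\sqrt{n}$ error is a good sanity check and correctly identifies the pitfall; both approaches arrive at the same bound, and yours is arguably somewhat more self-contained since it bypasses Corollary~\ref{cor-VarDnlim}.
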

\begin{proof}
Using the decomposition
\[
 \D_m - \D_n = \left( \tfrac{m}{n} \right)^{-\alpha} \left( \D_m \left(\tfrac{m}{n}\right)^\alpha - \D_n \right) + \left( \left( \tfrac{m}{n} \right)^{-\alpha} - 1 \right) \D_n, 
\]
we have that 
\begin{align*}
\Var(\D_m - \D_n)
&= \left( \tfrac{m}{n} \right)^{-2\alpha} \Var\left( \D_m \left( \tfrac{m}{n} \right)^\alpha - \D_n \right) 
+ \left( \left( \tfrac{m}{n} \right)^{-\alpha} - 1 \right)^2 \Var(\D_n) \\
&\qquad + 2 \left(\tfrac{m}{n} \right)^{-\alpha} \left( \left( \tfrac{m}{n} \right)^{-\alpha} - 1 \right) \Cov\left( \D_m\left( \tfrac{m}{n}\right)^\alpha - \D_n , \D_n   \right).
\end{align*}
By Lemma \ref{lemkey-var} and Corollary \ref{cor-VDn} the two variance terms on the right can be approximated by $n \int_1^{m/n} u^{2\alpha} \, du = \frac{n}{2\alpha+1} \left((m/n)^{2\alpha+1}-1\right)$ and $\frac{n}{2\alpha+1}$, respectively. 
With these approximations, the sum of the first two terms is approximated by  
\[
 \left( \tfrac{m}{n} \right)^{-2\alpha} \tfrac{n}{2\alpha+1} \left(\left( \tfrac{m}{n} \right)^{2\alpha+1}-1 \right) 
+ \left( \left( \tfrac{m}{n} \right)^{-\alpha} - 1 \right)^2 \left( \tfrac{n}{2\alpha+1} \right) 
= \tfrac{n}{2\alpha+1} \left( 1 + \tfrac{m}{n} - 2 \left(\tfrac{m}{n}\right)^{-\alpha} \right). 
\]
Therefore, we can conclude for $n$ large enough and $m \in [n,2n]$ that
\begin{align*}
&\left| \Var(\D_m - \D_n) - \tfrac{n}{2\alpha+1} \left( 1 + \tfrac{m}{n} - 2 \left(\tfrac{m}{n}\right)^{-\alpha} \right) \right| \\
&\leq \left( \tfrac{m}{n} \right)^{-2\alpha} \left| \Var\left( \D_m \left( \tfrac{m}{n} \right)^\alpha - \D_n \right) - n \int_{1}^{m/n} u^{2\alpha} \, du \right|  
 + \left( \left( \tfrac{m}{n} \right)^{-\alpha} - 1 \right)^2 \left|\Var(\D_n) - \tfrac{n}{2\alpha+1} \right| \\
&\qquad + 2 \left(\tfrac{m}{n} \right)^{-\alpha} \left( \left( \tfrac{m}{n} \right)^{-\alpha} - 1 \right) \Cov\left( \D_m\left( \tfrac{m}{n}\right)^\alpha - \D_n , \D_n   \right) \\
&\leq C  \sqrt{n}\log^2 n  + Cn \left( \left( \tfrac{m}{n} \right)^{-\alpha} - 1 \right)^2 + C \Cov\left( \D_m\left( \tfrac{m}{n}\right)^\alpha - \D_n , \D_n   \right). 
\end{align*}
For the covariance term in the last line above, recall the decomposition in \eqref{mgd} and note that $\Cov(\Delta_i, \D_n) = 0$ for all $i\geq n$. 
Thus, 
\begin{align*}
&\left|   
\Cov\left( \D_m\left( \frac{m}{n}\right)^\alpha - \D_n , \D_n   \right) 
\right| \\ 
&\leq \sqrt{\Var\left(  \sum_{i=n}^{m-1} \D_i \left( \left( \frac{i+1}{n} \right)^\alpha\left(1-\frac{\alpha}{i} \right) - \left(\frac{i}{n} \right)^\alpha\right)  \right)} \sqrt{\Var(\D_n)} \\
&\hspace{2in} + \sqrt{\Var\left( \sum_{i=n}^{m-1} \epsilon_i^n \left(\frac{i+1}{n}\right)^\alpha   \right)} \sqrt{\Var(\D_n)} 
\leq C  \sqrt{n}\log^2 n, 
\end{align*}
where in the last inequality we used the variance bounds from \eqref{vart2} and \eqref{vart3}. 

Thus far we have shown that for all $m\in [n,2n]$ and $n$ sufficiently large,
\[
\left| \Var(\D_m - \D_n) - \tfrac{n}{2\alpha+1} \left( 1 + \tfrac{m}{n} - 2 \left(\tfrac{m}{n}\right)^{-\alpha} \right) \right|
\leq C  \sqrt{n}\log^2 n  + Cn \left( \left( \tfrac{m}{n} \right)^{-\alpha} - 1 \right)^2.
\]
Finally, we note that there is a $C>0$ such that uniformly over all $m\ge n\ge 1$
\[
\left| \tfrac{1}{2\alpha+1}\,\left(1+\tfrac{m}{n} - 2\left(\tfrac{m}{n}\right)^{-\alpha} \right) - \tfrac{m-n}{n} \right| \leq C \left(\tfrac{m-n}{n} \right)^2
\quad\text{and}\quad
\left( \left(\tfrac{m}{n} \right)^{-\alpha} -1 \right)^2 \leq C \left(\tfrac{m-n}{n} \right)^2.
\]
From this the inequality in \eqref{VDmDn} follows easily. 
\end{proof}

The next step in obtaining bounds on $\Var(\D_{\tau_m^\B} - \D_{\tau_n^\B})$ is to obtain control on the difference $\D_{\tau_n^\B} - \D_{2n}$. For this, the following lemma will be useful. 

 \begin{lemma} \label{elenatype} There exist
   $M_0, c_1\in (0, \infty) $ such that for all $M \ge M_0 $,
 $n\in\N$, and $y\in[0,\sqrt{n}]$
\[
  P\bigg(\sup_{\tau^{\B}_{Mn} \le i \leq \tau^{\B}_{(M+1)n}} |\D_i -
    \D_{\tau^{\B}_{Mn}}|\ge y\sqrt{n}\bigg) \le \frac{1}{c_1}\,e^{-c_1
    y^2}.
\]
\end{lemma}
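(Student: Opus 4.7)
The proof adapts the approximate-martingale argument used in Lemma~\ref{lemkey-var} and combines it with tail bounds for the discrepancy at stopping times. Write $j_0 := \tau^{\B}_{Mn}$ and $j_1 := \tau^{\B}_{(M+1)n}$, and recall from Remark~\ref{rem:tautail} that $j_1 - j_0 = 2n + (\D_{j_1} - \D_{j_0})$. For $y$ bounded by a universal constant $y_0$ the claim is trivial by taking $c_1$ small; assume $y \geq y_0$ henceforth.

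I would first establish the uniform endpoint bound
\begin{equation*}
  P(|\D_{j_1} - \D_{j_0}| \geq t) \leq C\,e^{-c t^2/n}, \qquad 1 \leq t \leq n,
\end{equation*}
with constants independent of $M \geq M_0$. Conditioning on $\F^{\B,\mathfrak{R}}_{j_0}$ via the strong Markov property, $\D_{j_1} - \D_{j_0}$ equals (up to $-n$) the count of red draws among the shifted Rubin exponentials happening inside the window carved out by the $n$ additional blue exponentials $B_{Mn+1},\ldots,B_{(M+1)n}$, with means $(2(Mn+i))^\alpha$ and analogous means for the reds. The Chernoff/MGF argument from Case~II of Lemma~\ref{Lemma1} then applies verbatim once one divides through by the common scale $(Mn)^\alpha$: this cancels between the expectations and variances, leaving a moment generating function bound of sub-Gaussian type involving only the $n$ new terms. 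In particular, setting $E := \{|\D_{j_1} - \D_{j_0}| \leq y\sqrt{n}/4\}$ gives $P(E^c) \leq Ce^{-cy^2}$ and, crucially, $j_1 - j_0 \leq 3n$ on $E$.

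Next, conditioning on $\F^{\B,\mathfrak{R}}_{j_0}$ and following the ODE trick, set $\phi(i) := (i/j_0)^\alpha$, $Y_i := \phi(i)\D_i$. The one-step expansion from the proof of Lemma~\ref{lemkey-var} yields
\begin{equation*}
 \D_i - \D_{j_0} = \D_{j_0}\bigl((j_0/i)^\alpha - 1\bigr) + \phi(i)^{-1}(N_i + R_i),
\end{equation*}
where $N_i := \sum_{k=j_0}^{i-1}\phi(k+1)\Delta_k$ is a martingale with $|\phi(k+1)\Delta_k| \leq 2(1+3n/j_0)^\alpha \leq C$ on $E$, and $|R_i| \leq C\sum_{k=j_0}^{i-1}(\D_k^2 + k)/k^2$. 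Introduce the further good event $F := \{|\D_{j_0}| \leq y\sqrt{Mn}\}$, for which Lemma~\ref{Lemma1} gives $P(F^c) \leq Ce^{-cy^2}$ uniformly in $M$, and the stopping time $T := \inf\{i \geq j_0 : |\D_i - \D_{j_0}| \geq y\sqrt{n}\} \wedge (j_0+3n)$, so that $T \leq j_1$ on $E$. On $F \cap E \cap \{T < j_0+3n\}$, the boundary term satisfies $|\D_{j_0}|\cdot|(j_0/T)^\alpha - 1| \leq 3\alpha|\D_{j_0}|/M \leq 3\alpha y\sqrt{n/M} \leq y\sqrt{n}/8$ once $M_0 \geq (24\alpha)^2$, and the error $|R_T|$ is bounded by $Cy^2/M + C/M \leq y\sqrt{n}/8$ for $M_0$ large and $y \geq y_0$ large (using $|\D_k|\leq 2y\sqrt{Mn}$ for $k \in [j_0,T]$). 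Consequently $\{T<j_0+3n\}\cap F\cap E$ forces $|N_T|\geq cy\sqrt{n}$, and Azuma--Hoeffding applied to $N$ (at most $3n$ bounded increments) yields
\begin{equation*}
 P\bigl(\sup_{j_0 \leq i \leq j_0+3n}|N_i| \geq cy\sqrt{n} \,\big|\, \F^{\B,\mathfrak{R}}_{j_0}\bigr) \leq 2\,e^{-c'y^2},
\end{equation*}
uniformly in $M$ and in the starting state. Assembling $P(E^c)+P(F^c)$ with this conditional bound completes the proof after renaming constants.

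\textbf{Main obstacle.} The delicate step is the uniform-in-$M$ endpoint tail bound. A naive application of Lemma~\ref{Lemma1} to $\D_{j_0}$ and $\D_{j_1}$ separately gives only $e^{-cy^2/M}$, which degrades as $M$ grows. The remedy is to apply the Chernoff argument of Lemma~\ref{Lemma1} directly to the $n$-term difference via a shifted Rubin construction, exploiting the cancellation of the common $(Mn)^\alpha$ scaling so that only the $n$ additional exponentials enter the variance budget and the tail becomes Gaussian in $t/\sqrt{n}$ uniformly in $M$.
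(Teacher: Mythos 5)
Your argument is essentially correct but takes a genuinely different route from the paper's. The paper's proof exploits the \emph{self-repelling monotonicity}: conditional on $\D_{\tau^\B_{Mn}}$, the probability of the upward excursion event $A^+$ (resp.\ downward $A^-$) is monotone in the starting discrepancy, so after truncating $\{|\D_{\tau^\B_{Mn}}|\le y\sqrt{Mn}\}$ one may \emph{fix the worst-case starting value} $\mp\fl{y\sqrt{Mn}}$ and then couple the discrepancy walk (restricted to a window of $\le 2n+\fl{y\sqrt{n}}$ steps and to the good event $(B^-)^c$) above by a nearest-neighbor walk with bias $q_y-\tfrac12 = k_0 y/\sqrt{Mn}=O(1/\sqrt{M})$, finishing with the reflection principle and Hoeffding. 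The monotonicity does the uniformity-in-$M$ for free and keeps the whole argument at the level of SRW comparisons. You instead go through the $\phi(i)=(i/j_0)^\alpha$ compensation from the proof of Lemma~\ref{lemkey-var}, isolate the martingale $N_i$, show the boundary and remainder pieces are $O(y\sqrt{n}/\sqrt{M})$ on $E\cap F$, and apply Azuma--Hoeffding; and you get the endpoint control by a shifted Rubin/Chernoff calculation. Both strategies work and both rely on the same two structural facts (only $O(n)$ steps elapse on the good event; the starting discrepancy fluctuates like $\sqrt{Mn}$ so its contribution to the drift is $O(y\sqrt{n}/\sqrt{M})$), but the paper's coupling is more elementary while yours is closer in spirit to the later diffusion-approximation estimates.

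Two small points you should tighten. First, your ``uniform endpoint bound'' $P(|\D_{j_1}-\D_{j_0}|\ge t)\le Ce^{-ct^2/n}$ is stated before you introduce $F$, but the Rubin/Chernoff step does not go through ``verbatim'': the red exponentials in the shifted window have rates $(2(\mathfrak{R}_{j_0}+i)-1)^\alpha$, which depend on the conditioning variable $\mathfrak{R}_{j_0}=Mn+\D_{j_0}$, and the cancellation of scales only occurs once $|\D_{j_0}|$ is controlled (e.g.\ $|\D_{j_0}|\le t\sqrt{M}$, so the drift correction $\alpha n|\D_{j_0}|/r_0 = O(t/\sqrt{M})$ is dominated by $t$); the complementary probability $P(|\D_{j_0}|>t\sqrt{M})\le Ce^{-ct^2/n}$ then closes the loop. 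As written there is a mild circularity in presentation — the endpoint bound secretly uses the same truncation you introduce afterwards — and ``applies verbatim'' overstates the case. Second, you impose $y\ge y_0$ large for the error budget, but $y\ge 1$ together with $M_0$ large already suffices (since $Cy^2/M\le Cy\sqrt{n}/M$ for $y\le\sqrt{n}$ and $C/M\le y\sqrt{n}/16$ for $y\ge 1$), so the extra restriction is harmless but unnecessary. With these clarifications your proof is sound.
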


The proof of this Lemma is given in Appendix~\ref{app:psr}.

\begin{lemma}\label{DtnD2n-diff}
 There exists a constant $C>0$ such that 
 \[
 E\left[ \left( \D_{\tau_n^\B} - \D_{2n} \right)^2 \right] \leq C \sqrt{n}. 
 \]
\end{lemma}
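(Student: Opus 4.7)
The plan is to decompose $\D_{\tau_n^{\mathfrak{B}}} - \D_{2n}$ via the martingale-plus-drift identity \eqref{EDform} and bound the resulting terms using Doob's optional stopping together with the tail estimates already established in Lemma~\ref{Lemma1}. Set $S := \tau_n^{\mathfrak{B}} \wedge 2n$ and $T := \tau_n^{\mathfrak{B}} \vee 2n$; both are $\mathcal{F}^{\mathfrak{B},\mathfrak{R}}$-stopping times, and Remark~\ref{rem:tautail} gives $T - S = |\tau_n^{\mathfrak{B}} - 2n| = |\D_{\tau_n^{\mathfrak{B}}}|$. Since $(\D_{\tau_n^{\mathfrak{B}}} - \D_{2n})^2 = (\D_T - \D_S)^2$, the goal becomes bounding $E[(\D_T - \D_S)^2]$.

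Summing $\D_{i+1} - \D_i = \Delta_i - \alpha\D_i/i + \epsilon_i^n$ from $i = S$ to $T - 1$ yields
\[
\D_T - \D_S = (M_T - M_S) \;-\; \alpha \sum_{i=S}^{T-1} \frac{\D_i}{i} \;+\; \sum_{i=S}^{T-1} \epsilon_i^n,
\]
so, up to a factor of $3$, the expectation $E[(\D_T - \D_S)^2]$ splits into three contributions. For the martingale part, I will apply optional stopping to the process $(M_{(S+k)\wedge T} - M_S)^2 - \sum_{i=S}^{(S+k)\wedge T - 1} E[\Delta_i^2 \mid \mathcal{F}_i^{\mathfrak{B},\mathfrak{R}}]$ and pass to $k \to \infty$; this is justified by $|\Delta_i| \le 2$ together with $E[T-S] = E[|\D_{\tau_n^{\mathfrak{B}}}|] \le \sqrt{E[\D_{\tau_n^{\mathfrak{B}}}^2]} \le C\sqrt{n}$ from Lemma~\ref{Lemma1}. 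Since $E[\Delta_i^2 \mid \mathcal{F}_i^{\mathfrak{B},\mathfrak{R}}] \le 1$, the conclusion is $E[(M_T - M_S)^2] \le E[T - S] \le C\sqrt{n}$, which will be the dominant term.

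The drift and error sums will be handled by splitting on the good event $G := \{|\D_{\tau_n^{\mathfrak{B}}}| \le \sqrt{n}\log n\}$. On $G$ we have $[S, T] \subseteq [n, 3n]$ for $n$ large, and combining $|\D_{i+1} - \D_i| = 1$ with \eqref{stn} gives $\max_{i \in [S,T]} |\D_i| \le C\sqrt{n}\log n$; this produces the deterministic bounds $\bigl|\sum_{i=S}^{T-1} \D_i/i\bigr| \le C\log^2 n$ and, using $|\epsilon_i^n| \le C(\D_i^2 + i)/i^2 = O(\log^2 n/n)$ on $[S,T]$, also $\bigl|\sum_{i=S}^{T-1} \epsilon_i^n\bigr| = O(\log^3 n/\sqrt{n})$. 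Thus the contribution of the drift and error terms to $E[(\D_T - \D_S)^2 \indf{G}]$ is $O(\log^4 n)$. On $G^c$, the crude bound $(\D_T - \D_S)^2 \le (T - S)^2 = \D_{\tau_n^{\mathfrak{B}}}^2$ together with Cauchy--Schwarz, the moment bound $E[\D_{\tau_n^{\mathfrak{B}}}^4] \le Cn^2$ (obtained by integrating the sub-Gaussian tails in Lemma~\ref{Lemma1}), and the tail estimate $P(G^c) \le Ce^{-c\log^2 n}$, gives a contribution of order $n e^{-c\log^2 n/2} = o(1)$.

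The main obstacle I anticipate is making the optional stopping argument fully rigorous, since $T$ is unbounded: this is handled via the truncation-and-limit argument above, leveraging $E[T - S] < \infty$ and dominated convergence. Once that is in place, summing the three contributions gives $E[(\D_{\tau_n^{\mathfrak{B}}} - \D_{2n})^2] \le C\sqrt{n} + C\log^4 n + o(1) \le C'\sqrt{n}$ for all $n$ sufficiently large, which is the desired bound.
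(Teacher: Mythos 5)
Your proof is correct, and it takes a genuinely different route from the paper's. The paper's argument represents $E[(\D_{\tau_n^\B}-\D_{2n})^2]$ as $\int_0^\infty 2y\,P(|\D_{\tau_n^\B}-\D_{2n}|\ge yn^{1/4})\,dy$, and bounds the tail in the relevant range $y\le n^{3/8}$ by first controlling $|\tau_n^\B-2n|$ via Lemma~\ref{Lemma1} and then invoking the sub-Gaussian oscillation bound of Lemma~\ref{elenatype} for $\sup_{\tau^\B_{n-\fl{y\sqrt n}}\le i\le \tau^\B_{n+\fl{y\sqrt n}}}|\D_i-\D_{\tau^\B_{n-\fl{y\sqrt n}}}|$. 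Your approach instead reuses the Doob decomposition $\D_{i+1}-\D_i=\Delta_i-\alpha\D_i/i+\epsilon^n_i$ from \eqref{EDform} (already set up in the proof of Lemma~\ref{lemkey-var}), writes $\D_T-\D_S$ with $S=\tau_n^\B\wedge 2n$, $T=\tau_n^\B\vee 2n$ as martingale increment plus drift plus error, controls the martingale part by optional stopping of $N_k^2-\langle N\rangle_k$ (where $N_k=M_{k\wedge T}-M_{k\wedge S}$) to get $E[(M_T-M_S)^2]\le E[T-S]=E[|\D_{\tau_n^\B}|]\le C\sqrt n$, and then handles the drift and error terms deterministically on the good event $G=\{|\D_{\tau_n^\B}|\le\sqrt n\log n\}$. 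The upshot is that your route does not need Lemma~\ref{elenatype} at all, and the dominant term $E[T-S]\le C\sqrt n$ comes out cleanly from the predictable quadratic variation rather than a tail-integral computation. Two small points worth tightening: the justification for passing $k\to\infty$ in the optional stopping needs $E[(T-S)^2]<\infty$ (so that dominated convergence applies to $N_k^2$), not merely $E[T-S]<\infty$ as you write, but this is harmless since Lemma~\ref{Lemma1} gives $E[\D_{\tau_n^\B}^2]\le Cn$; and the Taylor-expansion bound $|\epsilon^n_i|\le C(\D_i^2+i)/i^2$ from \eqref{EDform} is only needed on $[S,T]\subseteq[n,3n]$ on $G$, where $|\D_i|/i\le C\log n/\sqrt n$ is small, which is exactly where you invoke it.
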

\begin{proof}
We will show that 
\[
 E\left[ \left( \frac{\D_{\tau_n^\B} - \D_{2n}}{n^{1/4}} \right)^2 \right]
 = \int_0^\infty 2y P\left( \left| \D_{\tau_n^\B} - \D_{2n} \right| \geq y n^{1/4} \right)dy
\]
is bounded uniformly in $n$.  By \eqref{lip} and Lemma \ref{Lemma1} we
get
\[
\int_{n^{3/8}}^\infty 2y P\left( \left| \D_{\tau_n^\B} - \D_{2n} \right| \geq y n^{1/4} \right)  dy
\leq \int_{n^{3/8}}^\infty 2y P\left( \left| \D_{\tau_n^\B} \right| \geq y n^{1/4} \right)  dy
\leq C \sqrt{n}\, e^{-c n^{1/4}}. 
\]
Thus, it remains only to get good bounds on $P\left( \left| \D_{\tau_n^\B} - \D_{2n} \right| \geq y n^{1/4} \right)$ for $y \leq n^{3/8}$. 
To this end, note that 
\begin{align*}
&P\left( \left| \D_{\tau_n^\B} - \D_{2n} \right| \geq y n^{1/4} \right) \\
&\qquad \leq P\left( | \tau_n^\B - 2n | \geq y \sqrt{n} \right) 
+ P\left( |\D_i - \D_{\tau_n^\B}| \geq yn^{1/4}, \text{ for some } |i-\tau_n^\B| \leq y\sqrt{n} \right) \\
&\qquad \leq P\left( | \D_{\tau_n^\B} | \geq y \sqrt{n} \right) 
+ P\left( \sup_{\tau_{n-\fl{y\sqrt{n}}}^\B \leq i \leq \tau_{n+\fl{y\sqrt{n}}}^\B } | \D_i - \D_{\tau_{n-\fl{y\sqrt{n}}}^\B} | \geq \frac{1}{2}\,y n^{1/4} \right) \\
&\qquad \leq Ce^{-c y^2} + C e^{-cy}, 
\end{align*}
where the second inequality follows from the fact that $\D_{\tau_n^\B} = \tau_n^\B - 2n$ and the last inequality follows from Lemmas \ref{Lemma1} and \ref{elenatype} for $n$ sufficiently large and $y\leq n^{3/8}$. 
\end{proof}

Finally, we obtain asymptotics for $\Var(\D_{\tau_n^{\B}})$ and a
  bound on $\Var(\D_{\tau_m^\B} - \D_{\tau_n^\B} )-2(m-n)$.
\begin{proposition}\label{VarDtmDtn}
  $\displaystyle\lim\limits_{n\to\infty}
    \frac{\Var(\D_{\tau_n^{\B}})}{2n} = \frac{1}{2\alpha+1}$. Moreover,
    there is a constant $C>0$ such that for $n$ sufficiently large and
    $m \in [n,2n]$,
\[
\left|\Var(\D_{\tau_m^{\B}} - \D_{\tau_n^{\B}}) - 2(m-n) \right|
\leq C n^{3/4} + C\frac{(m-n)^2}{n}.
\]
\end{proposition}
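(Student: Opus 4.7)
The plan is to reduce everything to the corresponding statements about the discrepancy process at \emph{deterministic} times, which have already been handled in Corollary \ref{cor-VarDnlim} and Corollary \ref{cor-VDn}, by using Lemma \ref{DtnD2n-diff} to control the difference between $\D_{\tau_n^\B}$ and $\D_{2n}$.

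For the first claim, I would write
\[
\Var(\D_{\tau_n^{\B}}) = \Var(\D_{2n}) + \Var(\D_{\tau_n^{\B}} - \D_{2n}) + 2\Cov(\D_{2n}, \D_{\tau_n^{\B}} - \D_{2n}).
\]
By Corollary \ref{cor-VarDnlim}, $\Var(\D_{2n})/(2n) \to 1/(2\alpha+1)$. By Lemma \ref{DtnD2n-diff}, $\Var(\D_{\tau_n^{\B}} - \D_{2n}) \le E[(\D_{\tau_n^{\B}} - \D_{2n})^2] \le C\sqrt{n}$, and hence by Cauchy--Schwarz the covariance is at most $C\sqrt{\sqrt{n}\cdot n} = O(n^{3/4})$. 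Both error terms are $o(n)$, so dividing by $2n$ yields the limit.

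For the second claim, I would introduce the decomposition
\[
\D_{\tau_m^{\B}} - \D_{\tau_n^{\B}} = \underbrace{(\D_{2m} - \D_{2n})}_{=: X} + \underbrace{(\D_{\tau_m^{\B}} - \D_{2m})}_{=: Y} - \underbrace{(\D_{\tau_n^{\B}} - \D_{2n})}_{=: Z}
\]
and expand the variance as $\Var(X)+\Var(Y)+\Var(Z)+2\Cov(X,Y)-2\Cov(X,Z)-2\Cov(Y,Z)$. Corollary \ref{cor-VDn} (applied with $2n,2m$ in place of $n,m$, noting $m\in[n,2n]$ implies $2m\in[2n,4n]$) gives
\[
|\Var(X) - 2(m-n)| \le C\sqrt{n}\log^2 n + C(m-n)^2/n,
\]
so in particular $\Var(X) \le Cn$. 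Lemma \ref{DtnD2n-diff} gives $\Var(Y) \le E[Y^2] \le C\sqrt{m} \le C'\sqrt{n}$ and similarly $\Var(Z) \le C\sqrt{n}$. By Cauchy--Schwarz, each of the three covariance terms is bounded by $\sqrt{\Var(X)\cdot\Var(Y)\vee\Var(Z)} \le \sqrt{n\cdot\sqrt{n}} = n^{3/4}$ up to constants, and $\Var(Y)+\Var(Z) \le C\sqrt{n} \le Cn^{3/4}$. Combining these estimates and absorbing the $\sqrt{n}\log^2 n$ error into $Cn^{3/4}$ gives the claimed bound.

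There is no real obstacle here; the argument is essentially bookkeeping via Cauchy--Schwarz on top of the nontrivial inputs (Corollaries \ref{cor-VarDnlim}, \ref{cor-VDn} and Lemma \ref{DtnD2n-diff}) that have already been established. The only thing to be attentive to is ensuring the covariance cross-terms actually fit within the $n^{3/4}$ budget, which is exactly where the $\sqrt{n}$ bound from Lemma \ref{DtnD2n-diff} is used.
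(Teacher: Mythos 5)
Your proof is correct and follows essentially the same route as the paper: the same decomposition $\D_{\tau_m^{\B}}-\D_{\tau_n^{\B}} = (\D_{2m}-\D_{2n}) + (\D_{\tau_m^{\B}} - \D_{2m}) - (\D_{\tau_n^{\B}} -\D_{2n})$, Cauchy--Schwarz on the cross-terms, and the same inputs (Corollary \ref{cor-VDn}, Corollary \ref{cor-VarDnlim}, and Lemma \ref{DtnD2n-diff}). Your explicit write-up of the first claim (which the paper dismisses as ``a similar argument'') correctly identifies that Corollary \ref{cor-VarDnlim} is the ingredient needed there, and your bookkeeping for the second claim, including the reindexing $n\mapsto 2n$, $m\mapsto 2m$ when applying Corollary \ref{cor-VDn}, is accurate.
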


\begin{proof}
Using the decomposition 
\[
 \D_{\tau_m^{\B}}-\D_{\tau_n^{\B}} = (\D_{2m}-\D_{2n}) + (\D_{\tau_m^{\B}} - \D_{2m}) - (\D_{\tau_n^{\B}} -\D_{2n} ),  
\]
we can expand the variance into the variance and covariance terms on the right. From this one obtains that for $m\in [n,2n]$ and $n$ sufficiently large
\begin{align*}
& \left| \Var(\D_{\tau_m^{\B}} - \D_{\tau_n^{\B}}) - 2(m-n) \right| \\
&\quad  \leq \left| \Var(\D_{2m}-\D_{2n}) - 2(m-n) \right| + \Var(\D_{\tau_m^{\B}} - \D_{2m}) + \Var(\D_{\tau_n^{\B}} -\D_{2n} ) \\
&\quad\qquad + 2 \sqrt{\Var( \D_{2m}-\D_{2n})} \left\{ \sqrt{\Var(\D_{\tau_m^{\B}} - \D_{2m})} + \sqrt{\Var(\D_{\tau_n^{\B}} - \D_{2n})}  \right\} \\
&\quad\qquad + 2 \sqrt{\Var(\D_{\tau_m^{\B}} - \D_{2m})}\sqrt{\Var(\D_{\tau_n^{\B}} - \D_{2n})} \\
&\quad \leq C n^{3/4} + C\frac{(m-n)^2}{n}, 
\end{align*}
where the last inequality follows from the variance estimates in Corollary \ref{cor-VDn} and Lemma \ref{DtnD2n-diff}. 
The proof of the limit of $\frac{\Var(\D_{\tau_n^{\B}})}{n}$ follows by a similar argument and using Corollary \ref{cor-VDn} and Lemma \ref{DtnD2n-diff}. 
\end{proof}

\subsubsection{The drift for the urn process}

Our main goal in this subsection is to prove that
$E[\D_{\tau_n^\B}] \to \frac{1}{2(2\alpha+1)}$ (Proposition
\ref{proplimit}). We will need the following lemma which slightly improves on the bound
$E[\D_{\tau_n^\B}] = O(\sqrt{n})$ that follows from Lemma \ref{Lemma1}.

\begin{lemma}\label{EDtn-size}
 $\displaystyle\lim\limits_{n\to\infty} \frac{1}{\sqrt{n}} E[\D_{\tau_n^\B}] = 0$.
\end{lemma}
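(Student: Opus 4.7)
The plan is to apply the T\'oth identity $E[U_1(\mathfrak{R}_{\tau_n^\B})] = V_1(n)$ (that is, \cite[Lemma~1]{tGRK}, already used in our proof of Lemma~\ref{lem:Edx}) together with a Taylor-type expansion of $U_1$ around $n$ in order to recover $E[\D_{\tau_n^\B}]$. Since $\mathfrak{R}_{\tau_n^\B} = n + \D_{\tau_n^\B}$, the identity reads
\[
E\!\left[U_1(n + \D_{\tau_n^\B})\right] - U_1(n) = V_1(n) - U_1(n).
\]
The point is that in the polynomially self-repelling regime both sides will turn out to be of the common size $O(n^\alpha)$, so once the main linear term $(2n+1)^\alpha E[\D_{\tau_n^\B}]$ is extracted from the left-hand side, it must itself be $O(n^\alpha)$, forcing $E[\D_{\tau_n^\B}]=O(1)$.

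First I would estimate the right-hand side. With $w(n)=(n+1)^{-\alpha}$ one has $(w(2j))^{-1}=(2j+1)^\alpha$ and $(w(2j+1))^{-1}=(2j+2)^\alpha$, so by the mean value theorem
\[
V_1(n) - U_1(n) = \sum_{j=0}^{n-1}\bigl[(2j+2)^\alpha - (2j+1)^\alpha\bigr] = O(n^\alpha).
\]
Second, for the left-hand side, writing $U_1(n+x)-U_1(n)=\sum_{j=n}^{n+x-1}(2j+1)^\alpha$ for integers $x\ge 0$ (and analogously for $x<0$) and approximating $(2j+1)^\alpha$ by $(2n+1)^\alpha$, a direct mean-value estimate yields the remainder bound
\[
\bigl|U_1(n+x)-U_1(n)-x(2n+1)^\alpha\bigr| \le C_\alpha\, n^{\alpha-1}\,x^2,\qquad |x|\le n,
\]
together with the coarser bound $|U_1(n+x)-U_1(n)-x(2n+1)^\alpha|\le C(|x|+n)^{\alpha+1}$ valid for all $x$.

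Third, I would take expectations and split according to whether $|\D_{\tau_n^\B}|\le n$ or not. On the bulk event, the sharp bound contributes at most $C_\alpha n^{\alpha-1} E[\D_{\tau_n^\B}^2]$, which is $O(n^\alpha)$ by the second-moment estimate coming from Lemma~\ref{Lemma1} (and \eqref{stn}). On the tail event, Lemma~\ref{Lemma1} gives the purely exponential tail $P(|\D_{\tau_n^\B}|>m)\le Ce^{-cm}$ for $m>n$, which easily absorbs the polynomial coarse bound and makes this contribution decay faster than any power of $n$. Combining the two pieces,
\[
(2n+1)^\alpha\, E[\D_{\tau_n^\B}] \;=\; (V_1(n)-U_1(n)) + O(n^\alpha) \;=\; O(n^\alpha),
\]
so $E[\D_{\tau_n^\B}]=O(1)$, which is much stronger than the claimed $o(\sqrt n)$.

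The main obstacle is keeping the Taylor remainder genuinely smaller than $(2n+1)^\alpha\, E[\D_{\tau_n^\B}]$ after averaging; this is exactly where the quadratic dependence in the remainder, combined with the second-moment bound $E[\D_{\tau_n^\B}^2]=O(n)$ from Lemma~\ref{Lemma1}, is essential. Using only the weaker first-moment control $E|\D_{\tau_n^\B}|=O(\sqrt n)$ would not suffice, since the linearization error must be compared against the background $O(n^\alpha)$ coming from $V_1(n)-U_1(n)$, and the quadratic-in-$x$ shape of the remainder is what makes this cancellation possible.
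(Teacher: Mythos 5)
Your proof is correct (modulo one small technical imprecision) and takes a genuinely different, more direct route than the paper's. The paper's own proof first reduces from the random stopping time $\tau_n^\B$ to the deterministic time $2n$ via Lemma~\ref{DtnD2n-diff}, and then bounds $|E[\D_n]|$ using the martingale decomposition \eqref{mgd}, taking $\delta\to 0$ at the end. You instead use the T\'oth identity $E[U_1(n+\D_{\tau_n^\B})]=V_1(n)$ directly, Taylor-expand $U_1$, and feed in only the second-moment bound $E[\D_{\tau_n^\B}^2]=O(n)$ from Lemma~\ref{Lemma1}. This is essentially a coarse version of the paper's later Proposition~\ref{proplimit}: the authors run the same identity argument there but push it further (using the variance asymptotics from Proposition~\ref{VarDtmDtn}) to obtain the exact limit $\frac{1}{2(2\alpha+1)}$. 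Your observation is that the crude version already yields $E[\D_{\tau_n^\B}]=O(1)$, which is stronger than the $o(\sqrt n)$ claimed in the lemma, using less machinery (no martingale decomposition, no variance asymptotics, no detour through $\D_{2n}$).

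One small fix: your sharp remainder bound $|U_1(n+x)-U_1(n)-x(2n+1)^\alpha|\le C_\alpha n^{\alpha-1}x^2$ for $|x|\le n$ is not quite right when $\alpha<1$. In that case $t\mapsto t^{\alpha-1}$ is decreasing, so when $x$ is close to $-n$ the MVT point $\xi\in[2(n+x)+1,\,2n+1]$ can be as small as $1$, and $\xi^{\alpha-1}$ can be of order $1$ rather than $n^{\alpha-1}$. The correct statement is that the sharp bound holds for $|x|\le\eta n$ with any fixed $\eta\in(0,1)$, while the regime $\eta n<|x|\le n$ should be lumped with the tail and handled by the coarse bound $C(|x|+n)^{\alpha+1}$; since $P(|\D_{\tau_n^\B}|>\eta n)\le Ce^{-c\eta n}$ by Lemma~\ref{Lemma1}, this contribution is still negligible. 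With this adjustment the argument goes through exactly as you describe.
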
 
\begin{proof}
  First of
  all, since Lemma \ref{DtnD2n-diff} implies that
  $E\left[ \left| \D_{\tau_n^\B}-\D_{2n} \right| \right] \leq C
  n^{1/4}$, it is enough to prove that
  $\lim\limits_{n\to\infty} \frac{1}{\sqrt{n}} E[\D_n] = 0$. To this
  end, for any fixed $\delta>0$ we have from \eqref{stn} and
  Lemma \ref{Lemma1} that
 \begin{equation} \label{EDn-dec}
 \left| E[\D_n] \right| 
 \leq \left| E\left[\D_n - \D_{\fl{\delta n}} \left( \fl{\delta n}/n \right)^\alpha \right] \right| + C \delta^{\alpha+\frac{1}{2}} \sqrt{n}, 
 \end{equation}
 For the first term on the right side, we use the decomposition in
 \eqref{mgd}, the fact that $E[\Delta_i] = 0$ for all $i$, and the
 second moment bounds that were used in the proofs of the variance
 bounds in \eqref{vart2}--\eqref{vart3} to obtain that
\begin{align}\label{EDn-dec2}
&\left| E\left[\D_n - \D_{\fl{\delta n}} \left( \fl{\delta n}/n \right)^\alpha \right] \right|  \\
& \leq \left| E\left[ \sum_{i=\fl{\delta n}}^{n-1} \D_i \left( \left( \frac{i+1}{n} \right)^\alpha \left(1-\frac{\alpha}{i}\right) - \left(\frac{i}{n}\right)^{\alpha} \right) \right]    \right| 
+ \left| E\left[ \sum_{i=\fl{\delta n}}^{n-1} \epsilon_i^n \left(\frac{i+1}{n} \right)^\alpha \right]  \right| \nonumber \\
& \leq C_\delta (\log n)/\sqrt{n} + C_\delta \log^2 n. \nonumber
\end{align}
Combining \eqref{EDn-dec} and \eqref{EDn-dec2} we obtain that 
$\limsup\limits_{n\to\infty} \frac{1}{\sqrt{n}} \left| E[\D_n] \right| \leq C \delta^{\alpha+\frac{1}{2}}$, and taking $\delta \to 0$ we get that $\lim\limits_{n\to\infty} \frac{1}{\sqrt{n}} E[\D_n] = 0$, which completes the proof of the lemma. 
\end{proof}

\begin{proposition} \label{proplimit}
$\displaystyle\lim\limits_{n\to\infty}E[\D_{\tau^{\B}_n}]=\frac{1}{2(2 \alpha +1)}$.
\end{proposition}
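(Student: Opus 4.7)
The plan is to extract $E[\D_{\tau_n^\B}]$ from the identity $E[U_1(n+\D_{\tau_n^\B})] = V_1(n)$ derived from \cite[Lemma 1]{tGRK} (already used in the proof of Lemma \ref{lem:Edx}), which for $w(n)=(n+1)^{-\alpha}$ reads with $U_1(m)=\sum_{j=0}^{m-1}(2j+1)^\alpha$ and $V_1(m)=\sum_{j=0}^{m-1}(2j+2)^\alpha$. The guiding intuition is that since $\D_{\tau_n^\B}$ is concentrated on the $\sqrt{n}$-scale with second moment $\sim 2n/(2\alpha+1)$ (by Proposition~\ref{VarDtmDtn} and Lemma~\ref{EDtn-size}), a \emph{second}-order Taylor expansion of $U_1(n+\cdot)$ turns that identity into a balance from which the $O(1)$ quantity $E[\D_{\tau_n^\B}]$ can be read off as the difference of two $O(n^\alpha)$ terms.

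First I would establish, by Taylor expanding $(2i+1)^\alpha$ about $i=n$ and summing over $i$ between $n$ and $n+y$, that uniformly for integer $|y|\leq n$,
\begin{equation*}
 U_1(n+y)-U_1(n) = y\,(2n+1)^\alpha + \alpha(2n+1)^{\alpha-1}\,y(y-1) + R_n(y), \qquad |R_n(y)|\leq C(2n+1)^{\alpha-2}|y|^3.
\end{equation*}
In parallel, using $(2j+2)^\alpha-(2j+1)^\alpha = \alpha(2j+1)^{\alpha-1} + O(j^{\alpha-2})$ together with an integral comparison, one gets
\begin{equation*}
 V_1(n)-U_1(n) = \tfrac12(2n+1)^\alpha + o(n^\alpha).
\end{equation*}
Substituting $y=\D_{\tau_n^\B}$, taking expectations, and dividing by $(2n+1)^\alpha$ then gives
\begin{equation*}
 E[\D_{\tau_n^\B}]\Bigl(1-\tfrac{\alpha}{2n+1}\Bigr) \;+\; \tfrac{\alpha\,E[\D_{\tau_n^\B}^2]}{2n+1} \;+\; \tfrac{E[R_n(\D_{\tau_n^\B})]}{(2n+1)^\alpha} \;=\; \tfrac{V_1(n)-U_1(n)}{(2n+1)^\alpha}.
\end{equation*}
The right side tends to $\tfrac12$, the prefactor of $E[\D_{\tau_n^\B}]$ tends to $1$, and by Proposition~\ref{VarDtmDtn} and Lemma~\ref{EDtn-size} one has $E[\D_{\tau_n^\B}^2]/(2n)\to 1/(2\alpha+1)$, so the second term on the left tends to $\alpha/(2\alpha+1)$.

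The only remaining point, which I expect to be the main technical obstacle, is to show that $E[R_n(\D_{\tau_n^\B})]=o((2n+1)^\alpha)$. I would split the expectation at the event $\{|\D_{\tau_n^\B}|\leq n\}$ and its complement. On the bulk event the cubic bound, combined with Lemma~\ref{Lemma1} and \eqref{stn} (which together give Gaussian tails for $\D_{\tau_n^\B}/\sqrt n$ and in particular $E[|\D_{\tau_n^\B}|^3]=O(n^{3/2})$), yields $|E[R_n(\D_{\tau_n^\B})\mathbf{1}_{\{|\D_{\tau_n^\B}|\leq n\}}]|=O(n^{\alpha-1/2})$. On the complementary event the super-exponential tail in Lemma~\ref{Lemma1} easily absorbs the crude bound $|R_n(y)|\leq C(n+|y|)^{\alpha+1}$. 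Putting everything together yields $E[\D_{\tau_n^\B}]\to \tfrac12 - \tfrac{\alpha}{2\alpha+1} = \tfrac{1}{2(2\alpha+1)}$, as claimed.
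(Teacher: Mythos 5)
Your proof follows essentially the same route as the paper's: both start from T\'oth's identity $E[U_1(n+\D_{\tau_n^\B})]=V_1(n)$, Taylor expand to second order, recover the second--order correction via $E[\D_{\tau_n^\B}^2]/(2n)\to 1/(2\alpha+1)$ from Proposition~\ref{VarDtmDtn} and Lemma~\ref{EDtn-size}, and kill the remainder with the sub-Gaussian tails from Lemma~\ref{Lemma1}; the paper merely organizes the bookkeeping by first peeling off $E[\D_{\tau_n^\B}](2n)^\alpha$ rather than writing the full second-order polynomial in $y$. One small inaccuracy worth fixing: for $\alpha<2$ the bound $|R_n(y)|\le C(2n+1)^{\alpha-2}|y|^3$ does not hold uniformly over $|y|\le n$ --- when $y$ is near $-n$ the summation index reaches small $j$, where $(2j+1)^{\alpha-2}$ is of order $1$ rather than $(2n+1)^{\alpha-2}$. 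This is harmless: shrink the ``bulk'' window to $|y|\le n^{3/5}$ (as the paper does) where the uniform Taylor bound is valid, and handle the annulus $n^{3/5}<|\D_{\tau_n^\B}|\le n$ by the crude polynomial bound $|R_n(y)|\le C(n+|y|)^{\alpha+1}$ together with the super-exponential tail from Lemma~\ref{Lemma1}; with that modification your estimates go through verbatim and yield the stated limit.
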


\begin{proof}
Using the first equality in \eqref{VUdiff} and the assumption that $w(i) = (i+1)^{-\alpha}$, we get
\begin{align*}
 \sum_{j=0}^{n-1} \left\{ (2j+2)^ \alpha-(2j+1)^\alpha \right\} 
&= E\left[ \sum_{i=0}^{\D_{\tau_n^{\B}} + n-1} (2i+1)^\alpha - \sum_{i=0}^{n-1} (2i+1)^\alpha \right] \\
&= E\left[ \D_{\tau_n^\mathfrak{B}} \right] (2n)^\alpha  + E\left[ \sum_{i=n}^{n + \D_{\tau_n^\mathfrak{B}} - 1} \left\{ (2i+1)^\alpha - (2n)^\alpha \right\}  \right], 
\end{align*}
where in the last equality and throughout the remainder of the proof we use the convention that 
$\sum_{i=n}^{n + m - 1} (\cdot) = 0$ if $m=0$ 
and $\sum_{i=n}^{n + m - 1} (\cdot) = -\sum_{i=n + m}^{n - 1} (\cdot)  $ if $m<0$. 
By integral approximations, the sum on the left above is easily seen to equal $\frac{1}{2}(2n)^\alpha + o(n^{\alpha})$, 
and so to
finish the proof, we need to obtain good asymptotics for the last expectation on the right. 
We will analyze the sum inside this expectation differently depending on whether or not $|\D_{\tau_n^{\B}}| \leq n^{3/5}$. Since the sum inside the expectation is always bounded by $C(n+|\D_{\tau_n^{\B}}|)^{\alpha+1}$ for a fixed constant $C>0$, it follows that
\[
 E\left[ \sum_{i=n}^{n + \D_{\tau_n^\mathfrak{B}} - 1} \left\{ (2i+1)^\alpha - (2n)^\alpha \right\} \ind{|\D_{\tau_n^{\B}}| > n^{3/5}} \right] 
\leq C E\left[ (n+|\D_{\tau_n^{\B}}|)^{\alpha+1} \ind{|\D_{\tau_n^{\B}}| > n^{3/5}} \right],
\]
and this upper bound vanishes as $n\to\infty$ by Lemma \ref{Lemma1}.
To control the sum inside the expectation when $|\D_{\tau_n^{\B}}| \leq n^{3/5}$, 
first note that 
by a Taylor series approximation there is a constant $C>0$ such that
\[
 \max_{i:|i-n|\leq n^{3/5}} \left| (2i+1)^\alpha - (2n)^\alpha  - (2n)^\alpha  \alpha \frac{i-n}{n} \right| \leq C n^{\alpha - \frac{4}{5}}. 
\]
Using this and the fact that $\sum_{i=n}^{n+m-1} (i-n) = \frac{|m||m-1|}{2}$ we have that on the event $\{|\D_{\tau_n^{\B}}| \leq n^{3/5} \}$,
\[
 \left| \sum_{i=n}^{n + \D_{\tau_n^\mathfrak{B}} - 1} \left\{ (2i+1)^\alpha - (2n)^\alpha \right\} -  (2n)^\alpha \alpha  \frac{|\D_{\tau_n^{\B}}||\D_{\tau_n^{\B}}-1|}{2n} \right| 
\leq C n^{\alpha-\frac{4}{5}} | \D_{\tau_n^\mathfrak{B}} | \leq C n^{\alpha-\frac{1}{5}}.
\]
Note that it follows from Lemma \ref{Lemma1}, Proposition \ref{VarDtmDtn} and Lemma \ref{EDtn-size} that 
\[
 \lim_{n\to\infty} E\left[ \frac{|\D_{\tau_n^{\B}}||\D_{\tau_n^{\B}}-1|}{2n} \ind{ |\D_{\tau_n^{\B}}| \leq n^{3/5} } \right] 
= \lim_{n\to\infty} E\left[ \left( \frac{\D_{\tau_n^{\B}}}{\sqrt{2n}} \right)^2 \right] 
= \frac{1}{2\alpha+1}.
\]
Combining the analysis of both the case $| \D_{\tau_n^\mathfrak{B}} | > n^{3/5}$ and $| \D_{\tau_n^\mathfrak{B}} |\leq n^{3/5}$, we obtain that 
\[
  E\left[ \sum_{i=n}^{n + \D_{\tau_n^\mathfrak{B}} - 1} \left\{ (2i+1)^\alpha - (2n)^\alpha \right\}  \right]
= (2n)^\alpha \frac{\alpha}{2\alpha+1} + o(n^{\alpha}). 
\]
Thus far we have shown that 
\[
 \frac{1}{2}(2n)^\alpha + o(n^{\alpha}) = E\left[ \D_{\tau_n^\mathfrak{B}} \right] (2n)^\alpha + (2n)^\alpha \frac{\alpha}{2\alpha+1} + o(n^{\alpha}). 
\]
Dividing both sides by $(2n)^\alpha$ and then taking $n\to \infty$ finishes the proof. 
\end{proof}

\subsubsection{Proof of Proposition \ref{lemC1}}
As noted in the remark following Proposition \ref{lemC1}, we need only prove the claim in \eqref{EM}. 

For purely notational reasons, we give a proof only for $c =0$. We argue
by contradiction and suppose that there exists a $\eta_0 > 0 $ for
which the conclusion fails.  This implies the existence of a sequence
of integers $M_r\to\infty$ as $r\to\infty$
such that for each $r$ there exists arbitrarily large $N$ such that
$\text{dist}\,(P^{Z^{M_r,0}_N},P^{Z^{(0,0)}}) \geq \eta_0$.  We note
that the space $(D([0,1]),d^\circ)$ (see \cite[(12.16)]{bCOPM})
is a complete separable metric space.  So by \cite[p.\,72]{bCOPM}, the
Prokhorov metric on the set of probability measures on $D([0,1])$
gives the topology of convergence in distribution.  Thus for any
choice of $N_r,\ r\geq 1$, such that
\begin{equation}\label{distlb}
\text{dist}\,(P^{Z^{M_r,0}_{N_r}},P^{Z^{(0,0)}}) \geq \eta_0,
\end{equation}
the processes $\{Z^{M_r,0}_{N_r}\}_{r \geq 1}$ cannot converge to $Z^{(0,0)}$ in law.
The desired contradiction will be arrived at by showing that 
we can find a sequence $N_r\to \infty$ satisfying \eqref{distlb} but for which 
 $\{Z^{M_r,0}_{N_r}\}_{r \geq 1}$ does converge to $Z^{(0,0)}$ in law.

To choose the sequence $N_r$, we first note that the first conclusion in Proposition \ref{lemC1} implies that 
\begin{align*}
 &\lim_{N\to\infty} P\left( \max_{j \leq N} \left| \mathcal{E}^{\mathcal{T}_{M_r N}}(j) - M_r N \right| \geq \frac{M_r N}{2} \right) \\
 &\qquad = P\left( \sup_{s\leq 1} \left| Z^{(\alpha,1)}(s) - M_r \right| \geq \frac{M_r}{2} \, \Bigl| \, Z^{(\alpha,1)}(0) = M_r \right) \\
 &\qquad = P\left( \sup_{s\leq \frac{1}{M_r}} \left| Z^{(\alpha,1)}(s) - 1 \right| \geq \frac{1}{2} \, \Bigl| \, Z^{(\alpha,1)}(0) = 1 \right) , 
\end{align*}
where the last equality follows from standard rescaling of Bessel squared processes. Since clearly the right side vanishes as $M_r \to \infty$ we can choose a sequence of integers $N_r \geq M_r^4$ such that 
\begin{equation}\label{Trcond}
 \lim_{r\to\infty} P\left( \max_{j \leq N_r} \left| \mathcal{E}^{\mathcal{T}_{M_r N_r}}(j) - M_r N_r \right| \geq \frac{M_r N_r}{2} \right) = 0. 
\end{equation}

To prove that $Z_{N_r}^{M_r,0}$ converges to $Z^{(0,0)}$ in law, we
will apply \cite[Theorem 4.1, p.\,354]{ekMP}.  As noted in the
paragraph preceding \eqref{EdiffDtn}, the process
$\mathcal{E}^{\mathcal{T}_\ell}$ is a time inhomogeneous Markov chain,
but if we define
$\tilde{\mathcal{E}}^{\mathcal{T}_\ell}(j) =
\mathcal{E}^{\mathcal{T}_\ell}(j) - \ind{0}(j)$ then
$\tilde{\mathcal{E}}^{\mathcal{T}_\ell}$ is a time homogeneous Markov
chain and so we will work with this process instead.  We first
consider (and then modify) the two dimensional process
$(Y^r_1(s), Y^r_2(s))$ defined by
\[
Y^r_1(s) = \frac{\tilde{\mathcal E}^{\mathcal{T}_{(M_r+1)N_r}}(\fl{sN_r})  - \tilde{\mathcal E}^{\mathcal{T}_{M_rN_r}}(\fl{sN_r})   }{N_r}, 
\quad\text{and}\quad
Y^r_2(s) = \frac{ \tilde{\mathcal E}^{\mathcal{T}_{M_rN_r}}(\fl{sN_r})   }{M_rN_r}.
\]
We are really only interested in proving that $Y_1^r$ converges in
distribution to $Z^{(0,0)}$; the second coordinate $Y_2^r$ is included
for convenience of the proof because $Y_1^r$ is not a Markov
  chain but the joint process $(Y_1^r,Y_2^r)$ is. Note that to
    make the proof simpler we (intentionally) overscaled $Y_2^r$  so
that it will converge to a constant.  Define
\begin{equation}
 T_r := \inf \left\{ j \in \N: \left| \tilde{\mathcal E}^{\mathcal{T}_{M_r N_r}}(j) - M_r N_r \right| \geq \frac{M_r N_r}{2} \right\}, 
\end{equation}
so that \eqref{Trcond} becomes
$\lim_{r\to\infty} P(T_r \leq N_r) = 0$.  We then take
$Z^r_i (s):=Y^r_i (s\wedge (T_r/N_r) ) $ so that
  $Z^r_i (s) = Y^r_i (s)$, $\forall s \leq 1$, $i\in\{1,2\}$, with
  probability tending to $1$ as $r\to \infty$.  As in \cite[Theorem
4.1]{ekMP} we choose processes $(B^r_i(s))_{s\in[0,1]} $,
  $i\in\{1,2\}$, so that $ Z^r_i(s) - B^r_i(s) $, $s\in[0,1]$, is a
martingale with respect to
$\mathcal{F}^{Z^r}_s = \sigma( Z^r_1(u), Z^r_2(u),\ u \leq s)$.
Obviously, the processes $Z^r_i $, $i=1,2$, are constant on
  $[(j-1)/N_r,j/N_r)$,
so we can write
\[
 B^r_i(s) = \sum_{j=0}^{\fl{sN_r }-1} b^r_i(j), 
\quad \text{where}\quad b^r_i(j )= E\left[ Z_{i}^r\left(\tfrac{j+1}{N_r} \right) -Z_{i}^r \left(\tfrac{j}{N_r}\right) \mid \mathcal{F}^{Z^r}_{j/N_r} \right]. 
\]
Similarly, we introduce $(A^r_{ik}(s))_{s\in[0,1]}$, $i,k \in\{1,2\}$, so
that $(Z^r_i(s)- B^r_i(s))(Z^r_k(s)- B^r_k(s)) -A^r_{ik}(s) $,
$s\in[0,1]$, is a martingale for all $i,k\in\{1,2\}$.
Again we can write
\[
 A^r_{ik}(s) = \sum_{j=0}^{\fl{sN_r }-1} a^r_{ik}(j),
\quad \text{where}\quad
a^r_{ik}(j) = \Cov\left(  Z^r_i\left(\tfrac{j+1}{N_r}\right), Z^r_k\left(\tfrac{j+1}{N_r}\right) \, \Bigl| \,  \mathcal{F}^{Z^r}_{j/N_r}  \right).
\]

Finishing the proof of Proposition \ref{lemC1} amounts to verifying the conditions of \cite[Theorem 4.1]{ekMP} with processes
$b_1,b_2, a_{12}, a_{22} \equiv 0,$ and $ a_{11}(x) = 2x$. 
That is, letting 
\begin{equation}\label{srldef}
 \sigma^r_\ell = \inf\{ s\geq 0:  Z_1^r(s) \geq \ell \},\qquad r \in \N,\ \ell > 1,  
\end{equation}
it is sufficient to show that for any fixed $\ell > 1$ we have 
\begin{alignat}{2}\label{EK4-3}
 &\lim_{r\to\infty} E\left[ \sup_{s\leq \sigma^r_\ell \wedge 1} \left| Z_i^r(s) - Z_i^r(s-) \right|^2 \right] = 0, \quad &&\text{for }  i \in \{1,2\}, 
\\\label{EK4-4}
 &\lim_{r\to\infty} E\left[ \sup_{s\leq \sigma^r_\ell \wedge 1} \left| B_i^r(s) - B_i^r(s-) \right|^2 \right] = 0, \quad &&\text{for } i \in \{1,2\},  
\\\label{EK4-5}
 &\lim_{r\to\infty} E\left[ \sup_{s\leq \sigma^r_\ell \wedge 1} \left| A_{ik}^r(s) - A_{ik}^r(s-) \right| \right] = 0, \quad &&\text{for } i,k \in \{1,2\},  
\\\label{EK4-6}
 &\lim_{r\to\infty} \sup_{s\leq \sigma^r_\ell \wedge 1} \left| B^r_i(s) \right| = 
 0, \quad \text{in probability}, &&\text{for } i \in \{1,2\},
\\\label{EK4-7a}
 &\lim_{r\to\infty} \sup_{s \leq \sigma^r_\ell \wedge 1} \left| A^r_{i2}(s) \right| = 0, \quad \text{in probability,}\quad &&\text{for }i\in\{1,2\}, \quad\text{and}
\\
\label{EK4-7b} 
  &\lim_{r\to\infty} \sup_{s\leq \sigma^r_\ell \wedge 1}  \left| A^r_{11}(s) - 2 \int_0^s Z^r_1(s) \, ds \right| = 0, \quad \text{in probability}.&&\  
\end{alignat}

We first address 
\eqref{EK4-4} and \eqref{EK4-6}.
Recalling the definitions above, and using \eqref{EdiffDtn} we have that
\begin{align*}
 b^r_2(j) &= \frac{1}{M_r N_r} E\left[ \tilde{\mathcal E}^{\mathcal{T}_{M_rN_r}}\left( (j+1)\wedge T_r \right) - \tilde{\mathcal E}^{\mathcal{T}_{M_rN_r}}\left( j \wedge T_r \right) \mid  \tilde{\mathcal E}^{\mathcal{T}_{M_rN_r}}(x), \, x\leq j \right] \\
 &= \frac{1}{M_rN_r}\,E\left[\D_{\tau_n^\B}\right]\ind{j<T_r}\qquad \text{where }\ n=\tilde{\mathcal E}^{\mathcal{T}_{M_rN_r}}(j)
  \\
 &\leq \frac{1}{M_r N_r}\max_{n \in [\frac{M_rN_r}{2}, \frac{3M_rN_r}{2} ]} E[\D_{\tau_n^\B}]. 
\end{align*}
It follows from Proposition \ref{proplimit} that for $r$ sufficiently large we have $|b_2^r(j)| \leq (M_r N_r)^{-1}$ for all $j$. 
Therefore, we can conclude that 
\[
\sup_{s\leq 1} (B_2^r(s)-B_2^r(s-))^2 
\ = \ \sup_{j\leq N_r} (b^r_2(j))^2 \leq (N_rM_r)^{-2} \rightarrow 0,
\]
and also 
\[
\sup_{s\leq 1} |B_2^r(s)| \leq  
N_r \sup_{j\leq N_r} |b^r_2(j)| \leq M_r^{-1} \rightarrow 0.
\]

A similar computation yields 
\begin{equation*}
 b^r_1(j) 
= \frac{1}{N_r}E\left[(\D_{\tau_m^{\B}}-\D_{\tau_n^{\B}})\right]\ind{j<T_r},\quad \text{where }\  n=\tilde{\mathcal E}^{\mathcal{T}_{M_rN_r}}(j)\ \text{ and } \  m=\tilde{\mathcal E}^{\mathcal{T}_{(M_r+1)N_r}}(j).
\end{equation*}
Since Proposition \ref{proplimit} implies that
$E\left[ \D_{\tau_m^{\B}} -\D_{\tau_n^{\B}} \right] \to 0$ as both
$m$ and $n$ go to $\infty$ and since 
$\tilde{\mathcal E}^{\mathcal{T}_{(M_r+1)N_r}}(j)\ \geq \tilde{\mathcal E}^{\mathcal{T}_{M_rN_r}}(j)
\geq \frac{M_rN_r}{2}$ on the event $\{j<T_r\}$, it follows that
\[
 \sup_{s\leq 1} (B_1^r(s)-B_1^r(s-))^2 \to 0
\quad\text{and}\quad
\sup_{s\leq 1} |B_1^r(s)| \to 0,  \quad \text{as }r \rightarrow \infty.
\]

We now consider 
\eqref{EK4-5},  \eqref{EK4-7a}, and \eqref{EK4-7b}.
For this, note that 
\begin{align*}
 a_{22}^r(j) 
&=  \frac{1}{(M_r N_r)^2}\Var\left( \tilde{\mathcal E}^{\mathcal{T}_{M_rN_r}}((j+1) \wedge T_r )  \, \Bigl| \,  \tilde{\mathcal E}^{\mathcal{T}_{M_rN_r}}(x), \, x \leq j \right) \\
&= \frac{1}{(M_r N_r)^2} \Var( \D_{\tau_m^{\B}} )\ind{j<T_r}, \quad  \text{where } \ m=\tilde{\mathcal E}^{\mathcal{T}_{M_rN_r}}(j),\\\shortintertext{and similarly}
a_{11}^r(j) 
&= \frac{1}{N_r^2} \Var\left( \D_{\tau_m^{\B}} - \D_{\tau_n^{\B}} \right)\ind{j<T_r}, \quad \text{where } \ n=\tilde{\mathcal E}^{\mathcal{T}_{M_rN_r}}(j)  \text{ and } \  m=\tilde{\mathcal E}^{\mathcal{T}_{(M_r+1)N_r}}(j). 
\end{align*}
We remark that in the analysis of the terms involving $a_{11}^r(j)$ we will use the fact that if $j < \sigma^r_\ell N_r$ then $\tilde{\mathcal E}^{\mathcal{T}_{(M_r+1)N_r}}(j) - \tilde{\mathcal E}^{\mathcal{T}_{M_rN_r}}(j) \leq \ell N_r$. 
Then to prove \eqref{EK4-5}, note that Proposition \ref{VarDtmDtn} implies that for $r$ sufficiently large 
\begin{align*}
 \sup_{s\leq \sigma^r_\ell \wedge 1} |A_{11}^r(s) - A_{11}^r(s-)| &= \max_{j< (\sigma^r_\ell \wedge 1)N_r} |a_{11}^r(j)| \\&\qquad \leq 
\max_{\substack{|n-M_rN_r| \leq \frac{M_rN_r}{2} \\ |m-n| \leq \ell N_r}} \frac{1}{N_r^2}\,\Var(\D_{\tau_m^\B}-\D_{\tau_n^\B})
\leq \frac{C}{N_r}, \\
 \sup_{s\leq 1} |A_{22}^r(s) - A_{22}^r(s-)| &= \max_{j<N_r} |a_{22}^r(j)| 
\leq \max_{|n-M_rN_r|\leq \frac{M_rN_r}{2}} \frac{\Var(\D_{\tau_n^\B})}{(M_rN_r)^2}  
\leq \frac{C}{M_r N_r}, \\
 \text{and }\quad 
 \sup_{s\leq 1} |A_{12}^r(s) - A_{12}^r(s-)| &= \max_{j<N_r} |a_{12}^r(j)| \leq  \max_{j<N_r} \sqrt{a_{11}^r(j)}\sqrt{a_{22}^r(j)} \leq \frac{C}{\sqrt{M_r}N_r}.
\end{align*}
(Note that to obtain the last inequality in the second line above we
are also using that $N_r \geq M_r^4$.)  This is enough to prove
\eqref{EK4-5}.  The last two bounds yield \eqref{EK4-7a}, since
$\sup_{s\leq 1} |A_{ij}^r(s)| \leq N_r \max_{j<N_r} |a_{ij}^r(j)|$.

For \eqref{EK4-7b}, note that Proposition \ref{VarDtmDtn}, together with the definition of the stopping times $T_r$ and $\sigma^r_\ell$, implies that for $r$ sufficiently large  
\begin{multline*}
\sup_{j < T_r \wedge (N_r \sigma^r_\ell)} \left| a_{11}^r(j) - \frac{2}{N_r} Z^r_1(j/N_r) \right|
\leq \frac{C}{N_r^2} \left( \left( \frac{3M_r N_r}{2} \right)^{3/4} + \frac{(2\ell N_r)^2}{\frac{M_r N_r}{2}}  \right)\\
\leq \frac{C M_r^{3/4}}{N_r^{5/4}} + \frac{C}{M_r N_r}. 
\end{multline*}
Thus, on the event $\{T_r \geq N_r\}$, we have that 
\begin{align*}
& \sup_{s\leq \sigma^r_\ell \wedge 1} \left| A_{11}^r(s) - \int_0^s 2 Z_1^r(s) \, ds \right|  \\
 &\leq \sup_{s\leq \sigma^r_\ell \wedge 1} \left\{ \sum_{j=0}^{\fl{N_r s}-1} \left| a_{11}^r(j) - \frac{2}{N_r} Z_1^r(j/N_r) \right| + \frac{Z_1^r(\fl{N_r s}/N_r)}{N_r} \right\} 
 \leq \frac{C M_r^{3/4}}{N_r^{1/4}} + \frac{C}{M_r} + \frac{\ell}{N_r}. 
\end{align*}
Since $M_r \to \infty$ as $r\to\infty$ and $N_r \geq M_r^4$, this bound vanishes as $r\to\infty$, and since $P(T_r \leq N_r) \to 0$ as $r\to\infty$, this completes the proof of \eqref{EK4-7a}. 

Finally, we will prove \eqref{EK4-3} for the case $i=1$ as a similar (but simpler) proof works for $i=2$ as well. First of all, note that
\begin{align*}
& E\left[ \sup_{s\leq \sigma^r_\ell \wedge 1} \left| Z_1^r(s) - Z_i^r(s-) \right|^2 \right] = E\left[ \max_{j< \sigma^r_\ell N_r \wedge N_r \wedge T_r } \left| Z_1^r(\tfrac{j+1}{N_r}) - Z_i^r(\tfrac{j}{N_r}) \right|^2  \right] \\
&\quad \leq \frac{M_r^2}{N_r} + \int_{M_r/\sqrt{N_r}}^\infty 2x P\Biggl( \max_{j<\sigma^r_\ell N_r \wedge N_r \wedge T_r} \left| Z_1^r(\tfrac{j+1}{N_r}) - Z_i^r(\tfrac{j}{N_r}) \right| > x \Biggr) \, dx. \\
&\quad \leq \frac{M_r^2}{N_r} + \sum_{j=0}^{N_r-1} \int_{M_r/\sqrt{N_r}}^\infty 2x P\Biggl( \left| Z_1^r(\tfrac{j+1}{N_r}) - Z_i^r(\tfrac{j}{N_r}) \right| > x, \, j < (\sigma_\ell^r N_r \wedge T_r) \Biggr) \, dx. 
\end{align*}
To bound the probabilities in the integral on the right, first recall that if $\tilde{\mathcal E}^{\mathcal{T}_{M_rN_r}}(j)=n$ and $\tilde{\mathcal E}^{\mathcal{T}_{(M_r+1)N_r}}(j)=m$ then $Z_1^r(\tfrac{j+1}{N_r}) - Z_i^r(\tfrac{j}{N_r})$ has the same distribution as $\frac{\D_{\tau_m^\B}-\D_{\tau_n^\B}}{N_r}$, and moreover if $j < ( \sigma_\ell^r N_r \wedge T_r)$ then these values of $n$ and $m$ must be such that $|n-M_rN_r|\leq \frac{M_r N_r}{2}$ and $n\leq m \leq n+ \ell N_r$. 
Therefore, 
conditioning on  $\tilde{\mathcal E}^{\mathcal{T}_{M_rN_r}}(j)$ and $\tilde{\mathcal E}^{\mathcal{T}_{(M_r+1)N_r}}(j)$ gives 
\begin{align*}
&P\Biggl( \left| Z_1^r(\tfrac{j+1}{N_r}) - Z_i^r(\tfrac{j}{N_r}) \right| > x, \, j < (\sigma_\ell^r N_r \wedge T_r) \Biggr) \\
&= \sum_{m,n} P\left( | \D_{\tau_m^\B} - \D_{\tau_n^\B} | > x N_r \right) 
 P\left( \tilde{\mathcal E}^{\mathcal{T}_{M_rN_r}}(j) = n, \,  \tilde{\mathcal E}^{\mathcal{T}_{(M_r+1)N_r}}(j) = m, \, j < (\sigma_\ell^r N_r \wedge T_r) \right) \\
&\leq 2 \sup_{\frac{M_r N_r}{2} \leq  n \leq  2 M_r N_r } P\left( | \D_{\tau_n^\B} | \geq \frac{x N_r}{2} \right)\leq C \exp\left\{ -c \left( \tfrac{x^2 N_r}{x\vee M_r} \right) \right\},
\end{align*}
where the last inequality holds by Lemma \ref{Lemma1}.
Thus, we can conclude that 
\[
 E\left[ \sup_{s\leq 1} \left| Z_1^r(s) - Z_i^r(s-) \right|^2 \right] 
  \leq \frac{M_r^2}{N_r} + N_r \int_{M_r/\sqrt{N_r}}^\infty 2x C \exp\left\{ -c \left( \tfrac{x^2 N_r}{x\vee M_r} \right) \right\} \, dx. 
\]
Since $M_r\to \infty$ as $r\to\infty$ and $N_r \geq M_r^4$, this upper bound vanishes as $r\to\infty$, 
thus completing the proof of \eqref{EK4-3} for $i=1$. 
 
\qed

\appendix

\section{Branching-like processes and generalized Ray-Knight theorems}\label{app:blp}

T\'oth's analysis of the type of SIRWs considered in this paper has been done primarily through the study of the directed edge local times of the walk stopped at certain stopping times. 
In this appendix we will first recall the definition of these processes and their connection with the directed edge local times of the random walk and then prove a few results which we shall need in Appendix~\ref{app:afc}.

We begin by defining  two homogeneous Markov chains on $\N_0$. 
The transition probabilities will be given in terms of the generalized P\'olya urn models from section \ref{sec:Polya}. 
\begin{itemize}
 \item[{\tiny $\bullet$}] $\zeta: = (\zeta_k)_{k\geq 0}$ is a Markov chain with transition probabilities given by 
 \begin{equation}\label{bblp}
  P(\zeta_{k+1} = j \mid \zeta_k = i) = P\left( \mathfrak{R}^{-}_{\tau_{i+1}^{\mathfrak{B}^-}} = j \right), \qquad \forall i,j\geq 0. 
 \end{equation}
 \item[{\tiny $\bullet$}] $\tilde\zeta: = (\tilde\zeta_k)_{k\geq 0}$ is a Markov chain with transition probabilities given by 
 \begin{equation}\label{fblp}
  P(\tilde\zeta_{k+1} = j \mid \tilde\zeta_k = i) = P\left( \mathfrak{R}^{+}_{\tau_{i}^{\mathfrak{B}^+}} = j \right), \qquad \forall i,j\geq 0. 
 \end{equation}
 \end{itemize}
 
\begin{remark}
We will refer to $\zeta$ and $\tilde\zeta$ as BLPs due to the fact that in the case where the weight function $w(\cdot) \equiv 1$ we have that $\tilde\zeta$ is a Galton-Watson branching process with Geo(1/2) offspring distribution and $\zeta$ is a Galton-Watson branching process with one immigrant before reproduction and Geo(1/2) offspring distribution. 
\end{remark} 

\begin{remark}
 Note that in \cite{tGRK} these Markov chains are defined slightly differently so that 
 \begin{align*}
    &P(\zeta_{k+1} = j \mid \zeta_k = i) = P\left( \mathfrak{B}^{+}_{\tau_{i+1}^{\mathfrak{R}^+}} = j \right), \qquad \forall i,j\geq 0. 
\\\shortintertext{
and }
  &P(\tilde\zeta_{k+1} = j \mid \tilde\zeta_k = i) = P\left( \mathfrak{B}^{-}_{\tau_{i}^{\mathfrak{R}^-}} = j \right), \qquad \forall i,j\geq 0. 
 \end{align*}

 However, it is easy to see that the difference in these definitions is simply an interchange of the labels ``red'' and ``blue'' and changing the corresponding parameters of the urn model accordingly. Thus, the Markov chains $\zeta$ and $\tilde\zeta$ defined as in \eqref{bblp} and \eqref{fblp} are equivalent to those in \cite{tGRK}. 
\end{remark}

The BLPs $\zeta$ and $\tilde\zeta$ are related to the $\mathcal{E}$ and $\mathcal{D}$ processes of local times of directed edges as defined in \eqref{upcr} when the random walk is stopped at certain special stopping times. There are various choices of the stopping times that can be used, but we will discuss this connection here only for the stopping times that will be needed for our purposes. 

For any $z \in \Z$ and $m\in \N$, let $\tau_{z,m} = \min\{n\geq 0: \, \mathcal{L}(z,n) = m \}$ be the $m$-th time the random walk reaches $z$. 
Following similar reasoning as in the paragraph above \eqref{EdiffDtn}, one can see that the process $(\mathcal{E}^{\tau_{z,m}}(x) )_{x\geq z}$ is a time inhomogeneous Markov chain. Indeed, using the fact that 
\[
\mathcal{D}^{\tau_{z,m}}(x) = \mathcal{E}^{\tau_{z,m}}(x-1)+1 \text{ if } z<x\leq 0, \quad \text{and}\quad \mathcal{D}^{\tau_{z,m}}(x) = \mathcal{E}^{\tau_{z,m}}(x-1) \text{ if } x>z \vee 0, 
\]
and the fact that the sequence of left/right steps from $x$ can be generated by the P\'olya urn process 
$(\mathfrak{B}_n^-, \mathfrak{R}_n^-)$ if $x<0$ or $(\mathfrak{B}_n^+, \mathfrak{R}_n^+)$ if $x>0$, it follows that 
\begin{align}\label{zetaE}
P\left( \mathcal{E}^{\tau_{z,m}}(x) = j \mid \mathcal{E}^{\tau_{z,m}}(x-1)=i \right) &= 
P\left( \zeta_1 = j \mid \zeta_0 = i \right)
,\ \  \text{if } z<x<0, \ \ \text{ and}
\\
\label{tzetaE}
P\left( \mathcal{E}^{\tau_{z,m}}(x) = j \mid \mathcal{E}^{\tau_{z,m}}(x-1)=i \right)
  &= P\left( \tilde\zeta_1 = j \mid \tilde\zeta_0 = i \right), \ \  \text{if } x > z\vee 0. 
\end{align}
Thus, $(\mathcal{E}^{\tau_{z,m}}(x))_{z\leq x<0}$ has the distribution of the BLP $\zeta$ with a random initial condition given by the distribution of $\mathcal{E}^{\tau_{z,m}}(z)$, 
and $(\mathcal{E}^{\tau_{z,m}}(x))_{x\geq z\vee 0}$ has the distribution of the BLP $\tilde\zeta$ with a random initial condition given by the distribution of $\mathcal{E}^{\tau_{z,m}}(z\vee 0)$.

The results in this section will only be needed for the proof of our
results in the asymptotically free case ($\alpha = 0$), and so we will
restrict our discussion to this case.  The following result due to
T\'oth \cite{tGRK} shows that when $\alpha=0$ the BLPs $\zeta$ and
$\tilde\zeta$ have scaling limits which are multiples of squared
Bessel processes of dimension $2-2\gamma$ and $2\gamma$, respectively.

\begin{proposition}[\text{\cite{tGRK}}]\label{prop:grkaf}
 Assume that $w$ is as in \eqref{w} with $\alpha = 0$. 
 \begin{enumerate}[(1)]
  \item For $n\geq 1$ let $\zeta^{(n)} = (\zeta^{(n)}_k)_{k\geq 0}$ have the distribution of the BLP $\zeta$ with initial condition $\zeta^{(n)}_0 = \fl{yn}$ for some $y \geq 0$, and let $\mathcal{Z}_n(t) = \frac{\zeta^{(n)}_{\fl{nt}} }{n}$ for $n\geq 1$ and $t\geq 0$. 
  Then, on the space $D([0,\infty))$ we have
 \begin{equation}\label{bblp-da}
  \mathcal{Z}_n(\cdot) \Longrightarrow Z^{(2-2\gamma)}(\cdot). 
 \end{equation}
 \item For $n\geq 1$ let $\tilde\zeta^{(n)} = (\tilde\zeta^{(n)}_k)_{k\geq 0}$ have the distribution of the BLP $\tilde\zeta$ with initial condition $\tilde\zeta^{(n)}_0 =  \fl{yn}$ for some $y > 0$, and let $\mathcal{\tilde{Z}}_n(t) = \frac{\tilde\zeta^{(n)}_{\fl{nt}} }{n}$ for $n\geq 1$ and $t\geq 0$. 
  Then, on the space $D([0,\infty)) \times [0,\infty)$ we have 
  \begin{equation}\label{fblp-da}
   \left( \mathcal{\tilde{Z}}_n(\cdot), \sigma_0^{\mathcal{\tilde{Z}}_n} \right)  \Longrightarrow \left( Z^{(2\gamma)}(\cdot \wedge \sigma_0^{Z^{(2\gamma)}}), \sigma_0^{Z^{(2\gamma)}} \right). 
  \end{equation}
 \end{enumerate}
\end{proposition}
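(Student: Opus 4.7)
My plan is to adapt the framework of T\'oth \cite{tGRK} to the slightly more general weight functions considered here, using the martingale-problem / generator-convergence approach via \cite[Chapter 7, Theorem 4.1]{ekMP} in the same spirit as the proof of Proposition \ref{lemC1}. The target SDE is read off from \eqref{Za0} with $\alpha = 0$: $Z^{(\delta)}$ satisfies $dZ = (\delta/2)\,dt + \sqrt{2Z}\,dW$. It therefore suffices to show that each rescaled BLP has one-step conditional mean increment $\sim (\delta/2)/n$, conditional variance $\sim 2x/n$ at scaled state $x$, and negligibly small jumps, and then to identify the appropriate $\delta$.

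The one-step means are computed directly from \eqref{bblp}--\eqref{fblp} and the identity $\mathfrak{R}^\pm_n = n + \D^\pm_n$:
\[
E[\zeta_{k+1}-\zeta_k \mid \zeta_k = i] = 1 + E[\D^-_{\tau^{\B^-}_{i+1}}],
\qquad
E[\tilde\zeta_{k+1}-\tilde\zeta_k \mid \tilde\zeta_k = i] = E[\D^+_{\tau^{\B^+}_{i}}].
\]
Lemma \ref{lem:Edx} gives $E[\D^+_{\tau^{\B^+}_n}] \to \gamma$; applying \cite[Lemma 1]{tGRK} to the $-$ urn (which swaps the roles of $U_1$ and $V_1$) yields $E[\D^-_{\tau^{\B^-}_n}] \to -\gamma$, with the $o(1)$ error controlled by the tail bounds of Lemma \ref{Lemma1} exactly as in the proof of Lemma \ref{lem:Edx}. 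This identifies limiting drifts $1-\gamma$ for $\mathcal{Z}_n$ and $\gamma$ for $\mathcal{\tilde Z}_n$, matching dimensions $\delta = 2-2\gamma$ and $\delta = 2\gamma$ respectively.

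For the conditional quadratic variation I would establish the asymptotically free analog of Proposition \ref{VarDtmDtn}: $\Var(\D^\pm_{\tau^{\B^\pm}_n}) \sim 2n$. Because $\alpha = 0$ forces the urn weights to be uniformly bounded above and below, this is considerably simpler than the polynomially self-repelling case. Writing $\D_{m+1}-\D_m = \Delta_m + E[\D_{m+1}-\D_m \mid \mathcal{F}_m^{\B,\mathfrak{R}}]$ as in Lemma \ref{lemkey-var}, the drift term is $O(m^{-p})$ by \eqref{w}; since $p > 1/2$ its squared contribution to the variance is summable, while the bounded martingale part contributes $n + o(n)$. The passage from the deterministic time $n$ to the stopping time $\tau^{\B^\pm}_n$ is carried out via the direct analog of Lemma \ref{DtnD2n-diff}, using $\tau^{\B^\pm}_n \approx 2n$. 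After scaling this gives conditional variance density $2x$ at scaled state $x$, and the small-jump hypothesis of \cite[Chapter 7, Theorem 4.1]{ekMP} follows from the Gaussian tail bounds in Lemma \ref{Lemma1}.

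The principal obstacle is handling the boundary at $0$, especially for part (2). For both processes, the limiting BESQ is absorbed at $0$ by the convention following \eqref{Za0}, while the chain $\zeta$ is not absorbing and $\tilde\zeta$ is. For $\zeta$ this causes no trouble: once $\mathcal{Z}_n$ reaches a neighborhood of $0$ its scaled increments are of order $1/n$, so the limit simply sits at $0$. For $\tilde\zeta$ the joint convergence in part (2) is the delicate point; I would run the Ethier--Kurtz argument up to the stopping times $\sigma_\epsilon^{\mathcal{\tilde Z}_n}$ (defined as in \eqref{ent}) for each fixed $\epsilon > 0$, obtain convergence of the stopped processes, then pass $\epsilon \downarrow 0$ using the fact that $Z^{(2\gamma)}$ crosses every level $\epsilon > 0$ continuously on its way to $0$. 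Lemma \ref{Lemma1} ensures that $\tilde\zeta$ cannot overshoot $0$ by an atypically large single jump, so the first-hitting-time-of-$0$ functional is continuous at a.e.\ limit path, and the continuous-mapping theorem delivers the claimed convergence of $(\mathcal{\tilde Z}_n, \sigma_0^{\mathcal{\tilde Z}_n})$.
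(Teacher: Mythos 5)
Your proposal reproves the diffusion approximations from scratch via the Ethier--Kurtz martingale-problem machinery, whereas the paper does something far shorter: it attributes the proposition to T\'oth and, in the remark immediately following, observes that T\'oth's original proof (which works with the normalized process $V_1(\zeta_k)$ and \cite[Lemma 2A]{tGRK}) goes through verbatim for the more general weight class once the error terms are updated from $O(x^{-1})$ to $O(x^{-(p\wedge\kappa)})$. Your route is genuinely different: you work directly with $\zeta$ rather than the $V_1$-transformed chain, compute one-step means via $E[\zeta_{k+1}-\zeta_k\mid\zeta_k=i]=1+E[\D^-_{\tau^{\B^-}_{i+1}}]\to 1-\gamma$ and $E[\tilde\zeta_{k+1}-\tilde\zeta_k\mid\tilde\zeta_k=i]=E[\D^+_{\tau^{\B^+}_i}]\to\gamma$ (both identities and limits are correct, the second following from the proof of Lemma \ref{lem:Edx} and the first from applying T\'oth's Lemma 1 to the color-swapped urn), and then propose to build an asymptotically-free analog of Proposition \ref{VarDtmDtn} giving $\Var(\D^\pm_{\tau^{\B^\pm}_n})\sim 2n$, which is indeed the $\alpha=0$ specialization. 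The paper's approach buys brevity by leaning on T\'oth's Section 5 wholesale; yours buys self-containment and a treatment unified with the proof of Proposition \ref{lemC1}, at the cost of having to redo the martingale decomposition, the $\tau_n^\B$-to-$2n$ comparison (as in Lemma \ref{DtnD2n-diff}), and the boundary analysis for $\tilde\zeta$. On the latter, the wording is slightly off: since $\tilde\zeta$ jumps to $0$ exactly rather than past it, the relevant concern is not overshoot but a macroscopically large single jump down to $0$ from a state of order $\epsilon n$; the Gaussian tails of Lemma \ref{Lemma1} do rule this out, so the $\epsilon\downarrow 0$ argument and the continuous-mapping step for $\sigma_0^{\tilde{\mathcal{Z}}_n}$ are sound. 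Overall the plan is correct and would yield a valid (if considerably longer) alternative proof.
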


\begin{remark}
  While in \cite{tGRK} it was assumed that the weight function $w$ was
  as in \eqref{w} with $p=\kappa = 1$, a careful reading of the proofs
  of the diffusion approximations in \cite[Section 5]{tGRK} shows that
  the argument goes through for $p \in (0,1]$ and $\kappa > 0$ with
  very few modifications.  In particular, the only thing that changes
  in \cite[Lemma 2A]{tGRK} is the error terms. For instance, if
  following T\'oth's notation, we let
  $F(x) = E[V_1(\zeta_1)\,|\, V_1(\zeta_0) = x]-x$, then the
  asymptotics $F(x) = 1-\gamma + O(x^{-1})$ in \cite[Lemma 2A]{tGRK}
  can be replaced (using the same proof) with
  $F(x) = 1-\gamma + O(x^{-(p\wedge\kappa)} )$.  Similar modifications
  can be made to the other error terms in \cite[Lemma 2A]{tGRK}, and
  from this point on the proof of the diffusion approximations for the
  BLPs goes through without any changes.
\end{remark}

In addition to the diffusion approximation for the BLPs, we will also need a few results which give information on the distributions of hitting probabilities of the BLP $\zeta$.

\begin{lemma}\label{lem:sztail}
Let $w$ be as in \eqref{w} with $\alpha=0$. 
If $\gamma \geq 0$, then for any $\epsilon>0$ there exists a constant $C_\epsilon>0$ such that  
$P^\zeta_0( \sigma_0^\zeta > n ) \geq C_\epsilon n^{-\gamma-\epsilon}$
for all $n$ large enough. 
\end{lemma}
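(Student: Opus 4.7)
The approach combines the diffusion approximation of Proposition~\ref{prop:grkaf}(1) with an approximate harmonic-function estimate. For the squared-Bessel-type process $Z^{(2-2\gamma)}$ of dimension $2-2\gamma\in(0,2]$, the function $h(x)=x^\gamma$ is harmonic (a direct computation $z(x^\gamma)''+\tfrac{2-2\gamma}{2}(x^\gamma)'=\gamma z^{\gamma-1}(\gamma-1+(1-\gamma))=0$), and the classical explicit formula for the density of $T_0:=\sigma_0^{Z^{(2-2\gamma)}}$ gives $P_y^{Z^{(2-2\gamma)}}(T_0>1)\sim c_\gamma y^\gamma$ as $y\downarrow 0$ when $\gamma>0$ (and $\equiv 1$ when $\gamma=0$). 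Consequently, for every $\epsilon>0$ there exists $c_\epsilon>0$ with $P_y^{Z^{(2-2\gamma)}}(T_0>1)\ge c_\epsilon\,y^{\gamma+\epsilon/2}$ for all $y\in(0,1]$.

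By Proposition~\ref{prop:grkaf}(1) applied to the continuity event $\{\inf_{t\in[0,1]}\mathcal{Z}_n(t)>0\}$, for each fixed $y>0$ we have $P_{\fl{yn}}^\zeta(\sigma_0^\zeta>n)\to P_y^{Z^{(2-2\gamma)}}(T_0>1)$ as $n\to\infty$. Picking $y_1=y_1(\epsilon)>0$ small enough then gives $P_{\fl{y_1 n}}^\zeta(\sigma_0^\zeta>n)\ge\tfrac12 c_\epsilon y_1^{\gamma+\epsilon/2}$ for all large $n$. Writing $T_h:=\inf\{k\ge 0:\zeta_k\ge h\}$, the strong Markov property at $T_{\fl{y_1 n}}$ reduces the lemma to the bound
\[
P_0^\zeta\bigl(T_{\fl{y_1 n}}<\sigma_0^\zeta\bigr)\ \ge\ c'\,(y_1 n)^{-\gamma-\epsilon/2}\qquad\text{for $n$ large,}
\]
since then $P_0^\zeta(\sigma_0^\zeta>n)\ge c'(y_1 n)^{-\gamma-\epsilon/2}\cdot\tfrac12 c_\epsilon y_1^{\gamma+\epsilon/2}=C_\epsilon n^{-\gamma-\epsilon/2}$ (the $y_1$ factors cancel), and relabeling $\epsilon/2\mapsto\epsilon$ finishes.

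To prove the displayed bound, I would verify that $h(x)=x^\gamma$ is approximately harmonic for the killed chain $\zeta$. T\'oth's identity $E[V_1(\zeta_{k+1})\mid\zeta_k=i]=U_1(i+1)$, together with the expansions $U_1(k),V_1(k)=k+O(k^{1-(p\wedge 1)})$ and $V_1(k)-U_1(k)\to\gamma$, and the second-moment estimate $E[(\zeta_{k+1}-\zeta_k)^2\mid\zeta_k=i]\sim 2i$ (obtained from the associated generalized P\'olya urn analogously to the arguments in Section~\ref{sec:Polya}), fed into a Taylor expansion, yield $E[\zeta_{k+1}^\gamma-\zeta_k^\gamma\mid\zeta_k=i]=O(i^{\gamma-1-\rho})$ for some $\rho>0$ and large $i$ --- the leading terms cancelling in exactly the way witnessed by the continuous identity $Lx^\gamma=0$. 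Summed along the trajectory up to $T_{\fl{y_1 n}}\wedge\sigma_0^\zeta$, this compensator is $o((y_1 n)^\gamma)$, so optional stopping applied to the near-supermartingale $h(\zeta_{k\wedge\sigma_0^\zeta\wedge T_{\fl{y_1 n}}})$ produces the desired lower bound; the positive probability $P_0^\zeta(\zeta_1\ge 1)$ is used to jump off the boundary where $h(0)=0$.

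The main obstacle will be this last step: the compensator bound requires uniform control over paths that make large excursions before reaching $\fl{y_1 n}$, which in turn needs moment/tail estimates for $\zeta_k$ on the survival event (obtainable from Lemma~\ref{Lemma1} applied to the urn model, together with a union bound). An alternative avoiding the harmonic-function analysis is a direct diffusion approximation for the joint law $(T_{\fl{y_1 n}}/n,\sigma_0^\zeta/n)$ under $P_0^\zeta$ uniform down to the degenerate starting value $0$, but this is of comparable technical difficulty.
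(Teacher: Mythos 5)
Your high-level decomposition matches the paper's: both split the event $\{\sigma_0^\zeta>n\}$ into (a) reaching a level of order $n$ before hitting $0$, and (b) surviving a further time $n$ from that level, and both rely on the fact that $x^\gamma$ is harmonic for $Z^{(2-2\gamma)}$ (you verify this directly; the paper uses it implicitly via the martingale $(Z^{(2-2\gamma)}_t)^\gamma$ when computing hitting probabilities for the limiting diffusion). For (b) the paper simply applies the fixed-scale diffusion approximation starting from height $n$, which gives a constant lower bound with no $y_1$-bookkeeping; your version, starting from $\fl{y_1n}$ with $y_1\to 0$, is correct but a bit more delicate since you need the $y^\gamma$-asymptotic of the BESQ hitting probability and must track the $y_1$-dependence so it cancels.

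The genuine gap is in your treatment of (a). The paper proves $P_0^\zeta(\tau_n^\zeta<\sigma_0^\zeta)\ge C_\epsilon n^{-\gamma-\epsilon}$ by telescoping over dyadic scales: at each scale $[2^{k-1},2^k]$ with $k\ge k_0(\epsilon)$ the diffusion approximation (Proposition~\ref{prop:grkaf}(1), a result already available in the paper) bounds the doubling probability from below by $2^{-\gamma-\epsilon}$, and a product of $\lceil\log_2 n\rceil$ such factors gives the claim. This uses the diffusion approximation only at \emph{fixed} macroscopic scales, never near the boundary $0$. Your alternative is to show that $k\mapsto\zeta_k^\gamma$ is approximately a martingale and apply optional stopping. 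The problematic step is the accumulated-compensator bound. You need the expectation of $\sum_{k<T}\left|E[\zeta_{k+1}^\gamma-\zeta_k^\gamma\mid\zeta_k]\right|$ (with $T=T_{\fl{y_1n}}\wedge\sigma_0^\zeta$) to be bounded uniformly in $n$ and smaller than $E[h(\zeta_1)]>0$; an error bound of the form $O(i^{\gamma-1-\rho})$ valid ``for large $i$'' does not control the contributions from the near-boundary region $\zeta_k=O(1)$, where the chain accumulates most of its occupation time on excursions from $0$ that fail to reach $\fl{y_1n}$, and where the Taylor/Lyapunov expansion degenerates. Making this rigorous would require Green's-function or occupation-time estimates for $\zeta$ near $0$ that are not in the paper and would themselves be comparable in difficulty to the conclusion you are after. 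You flag this obstacle yourself; it is precisely the issue the paper's dyadic argument is designed to avoid.
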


\begin{remark}
  We suspect that the true tail asymptotics are of the form
  $P_0^\zeta( \sigma_0^\zeta > n) \sim C n^{-\gamma}$ when $\gamma>0$
  and $P_0^\zeta( \sigma_0^\zeta > n) \sim C/\log n$ when $\gamma=0$,
but proving such precise asymptotics would require significant extra
work and the asymptotics in Lemma \ref{lem:sztail} are sufficient for
our purposes in the remainder of the paper.
\end{remark}

\begin{proof}[Proof of Lemma~\ref{lem:sztail}]
 First of all, note monotonicity of the Markov process $\zeta$ with respect to its initial condition and the Strong Markov property together imply that
\[
 P^\zeta_0( \sigma_0^\zeta > n ) \geq
 P^\zeta_0( \sigma_0^\zeta > \tau_n^\zeta) P^\zeta_n( \sigma_0^\zeta > n  )
\]
For the second probability on the right, the diffusion approximation implies that  
\[
\liminf_{n\to\infty} P^\zeta_n( \sigma_0^\zeta > n ) \geq \lim_{n\to\infty} P^\zeta_n( \sigma_{n/2}^\zeta > n ) = P( \sigma_{1/2}^{Z^{(2-2\gamma)}} > 1 \mid Z_0^{(2-2\gamma)} = 1 ) > 0,
\]
and thus
it is enough to show that 
$P^\zeta_0( \sigma_0^\zeta > \tau_n^\zeta ) \geq C_\epsilon n^{-\gamma-\epsilon}$. 
To this end, first note that the diffusion approximation implies that for any $x \in (0,1)$, 
\begin{align*}
\liminf_{k\to\infty} P^\zeta_{2^{k-1}} ( \tau^\zeta_{2^k} < \sigma_0^\zeta ) 
&\geq \lim_{k\to\infty} P^\zeta_{2^{k-1}} ( \tau^\zeta_{2^k} < \sigma_{x 2^{k-1}}^\zeta ) \\
&= P(\tau^{Z^{(2-2\gamma)}}_2 < \sigma_x^{Z^{(2-2\gamma)}} \mid Z^{(2-2\gamma)}(0) = 1) =
\begin{cases}
 \frac{1-x^\gamma}{2^\gamma-x^\gamma} & \text{if } \gamma > 0\\[1mm]
 \frac{\log(x)}{\log(x/2)} & \text{if } \gamma = 0. 
\end{cases}
\end{align*}
(Note that the last probability can be computed using martingale
properties of $(Z_t^{(2-2\gamma)})^\gamma$ when $\gamma>0$ and
 of $\log(Z_t^{(2-2\gamma)})$  when
$\gamma=0$.)  Taking $x\to 0$ we can conclude that
$\liminf_{k\to\infty} P^\zeta_{2^{k-1}} ( \tau^\zeta_{2^k} <
\sigma_0^\zeta ) \geq 2^{-\gamma}$, and thus for any $\epsilon>0$
there exists a $k_0 = k_0(\epsilon)$ so that
\[
 P^\zeta_{2^{k-1}}( \tau^\zeta_{2^k} < \sigma_0^\zeta ) \geq 2^{-\gamma-\epsilon}, \quad \forall k\geq k_0. 
\]
Then, for $n\geq 2^{k_0}$ we have 
\begin{align*}
 P^\zeta_0( \sigma_0^\zeta > \tau_n^\zeta )
 &\geq P^\zeta_0( \sigma_0^\zeta > \tau_{2^{\lceil \log_2 n \rceil }}^\zeta ) = P^\zeta_0( \sigma_0^\zeta > \tau_{2^{k_0-1}}^\zeta  )
 \prod_{k=k_0}^{\lceil \log_2 n \rceil} P^\zeta_0( \tau^\zeta_{2^k} < \sigma_0^\zeta \mid  \tau^\zeta_{2^{k-1} } < \sigma_0^\zeta ) \\
 &\geq P^\zeta_0( \sigma_0^\zeta > \tau_{2^{k_0-1}}^\zeta )
 \prod_{k=k_0}^{\lceil \log_2 n \rceil} P^\zeta_{2^{k-1}} ( \tau^\zeta_{2^k} < \sigma_0^\zeta )\\ &\geq P^\zeta_0( \sigma_0^\zeta > \tau_{2^{k_0-1}}^\zeta ) \left( 2^{-\gamma-\epsilon}\right)^{\lceil \log_2 n \rceil} 
 \geq P^\zeta_0( \zeta_1 \geq 2^{k_0-1} ) 2^{-\gamma-\epsilon} n^{-\gamma-\epsilon}.
\end{align*}
\end{proof}
In contrast to the previous lemma, since $Z^{(2-2\gamma)}$ is
transient when $\gamma<0$ it is natural to expect that with positive
probability the process $\zeta$ never goes too far below where it
starts. The following lemma makes precise the sort of statement that
we will need later.

\begin{lemma}\label{lem:noreturn}
 Assume $\gamma < 0$. Then, there exists a constant $c\geq 0$ so that $P_{4n}^\zeta\left( \sigma^\zeta_n = \infty \right) \geq c$ for all $n\geq 1$. 
\end{lemma}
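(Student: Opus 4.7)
Since $\gamma<0$, the limiting diffusion $Z^{(2-2\gamma)}=\tfrac12\mathrm{BESQ}^{2-2\gamma}$ has dimension $>2$ and is therefore transient. Its generator $z\,\partial_z^{2}+(1-\gamma)\,\partial_z$ makes $z^{\gamma}$ a bounded martingale on $[1,\infty)$, so optional stopping at $\sigma_a^Z\wedge\tau_b^Z$ and sending $b\to\infty$ gives $P_x^Z(\sigma_a^Z<\infty)=(x/a)^{\gamma}$ for $0<a<x$; in particular $P_4^Z(\sigma_1^Z=\infty)=1-4^{\gamma}>0$. My plan is to transfer this positive probability to the BLP $\zeta$ as a uniform-in-$n$ lower bound, separating the large- and small-$n$ regimes.

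For large $n$, I would iterate along a geometric sequence $a_j=4K^j$ for a large fixed $K>1$. The monotonicity of $\zeta$ in its initial condition (as used in the proof of Lemma~\ref{lem:sztail}) together with the strong Markov property at the times $\tau^{\zeta}_{a_jn}$ gives
\[
P_{4n}^{\zeta}\bigl(\sigma_n^{\zeta}=\infty\bigr)\;\ge\;\prod_{j=0}^{\infty}P^{\zeta}_{a_jn}\bigl(\tau^{\zeta}_{a_{j+1}n}<\sigma^{\zeta}_n\bigr),
\]
because $\bigcap_{j\ge 0}\{\tau^{\zeta}_{a_{j+1}n}<\sigma^{\zeta}_n\}$ forces $\tau^{\zeta}_{a_{j+1}n}\uparrow\infty$ and is therefore contained in $\{\sigma^{\zeta}_n=\infty\}$. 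For each fixed $j$, Proposition~\ref{prop:grkaf} combined with almost sure continuity of hitting times on paths of $Z^{(2-2\gamma)}$ gives convergence of the $j$-th factor to $(1-a_j^{\gamma})/(1-a_{j+1}^{\gamma})$ as $n\to\infty$, and the corresponding continuous infinite product telescopes to $1-4^{\gamma}$. The principal obstacle is a uniform-in-$n$ lower bound on the tail of the product (equivalently, a uniform estimate $P^{\zeta}_{Mn}(\sigma^{\zeta}_n<\tau^{\zeta}_{M'n})\le CM^{\gamma}$ for large $M<M'$), which I would obtain via an approximate discrete supermartingale: a refinement of \cite[Lemma 2A]{tGRK} in the spirit of the remark following Proposition~\ref{prop:grkaf} yields $E[\zeta_{k+1}-i\mid\zeta_k=i]=(1-\gamma)+O(i^{-(p\wedge\kappa)})$ and $\Var(\zeta_{k+1}\mid\zeta_k=i)=2i+o(i)$, so a Taylor expansion of $(x/n)^{\gamma}$ shows that $(\zeta_k/n)^{\gamma}$ is an approximate martingale on $\{\zeta_k\ge n\}$ with per-step error of order $i^{\gamma-1-(p\wedge\kappa)}$; optional stopping at $\sigma_n^{\zeta}\wedge\tau_{Ln}^{\zeta}$ followed by $L\to\infty$, with accumulated errors controlled by transience of $\zeta$, then gives $\liminf_{n\to\infty}P_{4n}^{\zeta}(\sigma_n^{\zeta}=\infty)\ge 1-4^{\gamma}$.

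For small $n$ I would invoke irreducibility of $\zeta$ on $\mathbb{N}_0$: from the positivity of all urn draw probabilities, from state $4n$ the chain $\zeta$ reaches any fixed state $4n_0$ in a single step with positive probability, where $n_0$ is the threshold produced in the large-$n$ step above. Since $n\le n_0$ implies the inclusion $\{\sigma^{\zeta}_{n_0}=\infty\}\subseteq\{\sigma^{\zeta}_n=\infty\}$, the Markov property at time~$1$ yields $P^{\zeta}_{4n}(\sigma^{\zeta}_n=\infty)\ge P^{\zeta}_{4n}(\zeta_1=4n_0)\cdot P^{\zeta}_{4n_0}(\sigma^{\zeta}_{n_0}=\infty)$; the first factor is strictly positive for each of the finitely many $n<n_0$, so taking the minimum and combining with the large-$n$ estimate produces the desired uniform constant $c>0$.
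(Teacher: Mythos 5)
Your overall strategy---a product over geometric scales $a_j=4K^j$, with each factor matched to the hitting probabilities of the transient limiting diffusion and the product telescoping to $1-4^\gamma$---is a genuinely different route from the paper's, but the step that carries the real difficulty is only sketched, and as written it is circular. You propose to control the accumulated error in the optional-stopping identity for $\zeta_k^\gamma$ ``by transience of $\zeta$,'' but transience is essentially the conclusion you are trying to reach. Concretely, after the leading terms cancel the accumulated error is a sum of $O(\zeta_k^{\gamma-1-\epsilon})$ terms over $k<\sigma_n^\zeta\wedge\tau_{Ln}^\zeta$; bounding its expectation uniformly in $L$ requires an independent estimate on the expected occupation time of each scale before the chain escapes, which is precisely what the paper extracts via \eqref{noescape} (namely $E^\zeta_{m_k}[\sigma^\zeta_{n_{k-1}}\wedge\tau^\zeta_{n_{k+1}}]\le Cn_k$), and that estimate itself rests on the diffusion approximation and a compactness argument, not on transience. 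Without some such input, the supermartingale inequality does not close.

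Two further points are left unaddressed. First, the Taylor remainder for $x\mapsto x^\gamma$ with $\gamma<0$ is singular near $0$, so on the (rare) event of a large downward jump the expansion fails; controlling this requires the concentration bound of Lemma \ref{Lemma1} on the one-step jump of $\zeta$, which is exactly the role of the paper's ``undershoot'' estimate \eqref{undershoot}. Second, the uniform bound $P^\zeta_{Mn}(\sigma^\zeta_n<\tau^\zeta_{M'n})\le CM^\gamma$, even once established, does not by itself give a strictly positive product from $a_0=4$ unless $C<4^{-\gamma}$: one must use the diffusion approximation for the first finitely many factors and the uniform bound only for the tail, an extra reduction you do not make explicit. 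These gaps are fillable, but filling them essentially reconstructs the machinery of the paper's proof, which works at dyadic scales precisely so that the diffusion approximation applies uniformly in $k$ at a fixed scale ratio and handles the overshoot and occupation-time issues through \eqref{bias}, \eqref{undershoot}, and the coupling with a nearest-neighbor walk with killing.
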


\begin{proof}
If $k = \lceil \log_2 n \rceil + 1$, then  
$P_{4n}^\zeta\left( \sigma^\zeta_n = \infty \right) \geq P^\zeta_{2^{k}} \left( \sigma^\zeta_{2^{k-1}} = \infty \right)$. 
Therefore, it is enough to show that
\begin{equation}\label{2ktrans}
 P^\zeta_{2^{k}} \left( \sigma^\zeta_{2^{k-1}} = \infty \right) \geq c > 0, \quad \forall k\geq 0. 
\end{equation}
For notational convenience we will let $n_k = 2^k$ and $m_k = 2^k - \fl{2^{2k/3}}$ for $k\geq 0$. 
We claim that to prove \eqref{2ktrans} it is enough to show that there exists $q>1/2$ and $k_1\in\N$ such that 
\begin{equation}\label{bias}
 P^\zeta_{m_k} \left( \tau^\zeta_{n_{k+1}} < \sigma^\zeta_{n_{k-1}} \right) \geq q, \quad \forall k\geq k_1, 
\end{equation}
and 
\begin{equation}\label{undershoot}
P^\zeta_{m_k} \left( \zeta_{\sigma^\zeta_{n_{k-1}} \wedge \tau^\zeta_{n_{k+1}} } < m_{k-1} \right) \leq  
C n_k e^{-c n_k^{1/3}} , \quad \forall k\geq k_1.
\end{equation}

To see that \eqref{bias} and \eqref{undershoot} imply \eqref{2ktrans},
first note that it's enough to prove \eqref{2ktrans} only for
$k\geq k_1$ and secondly that we may assume without loss of generality
that $k_1$ is large enough so that $n_{k-1} < m_k$ for all
$k\geq k_1$.  Now consider a Markov process $\{L_i\}_{i\geq 0}$ on
$\{\del\}\cup\{k_1,k_1+1,k_1+2,\ldots\}$ with $\del$ being an
absorbing state and where at states $k\geq k_1$ the process jumps to
the right with probability $q$, to the left (if $k>k_1$) with
probability $1-q-Cn_k e^{-c n_k^{1/3}}$, and otherwise jumps to the
absorbing state $\del$.  Using the strong Markov property, together
with the fact that $\zeta$ is monotone with respect to its starting
point, one can then use \eqref{bias}-\eqref{undershoot} and a coupling
argument to conclude that
$P^\zeta_{2^k}( \sigma_{2^{k-1}}^\zeta = \infty) \geq P^L_k(
\sigma_{k-1}^L \wedge \sigma_\del^L = \infty) \geq P^L_{k_1}(
\sigma_\del^L = \infty)$, where
$\sigma_\del^L = \inf\{i\geq 0: L_i = \del \}$.  Finally, basic simple
random walk computations can show that
$ P^L_{k_1}( \sigma_\del^L = \infty) > 0$.  Thus, it remains only to
show \eqref{bias} and \eqref{undershoot}.

For \eqref{bias}, for any $\epsilon>0$ we have that for $k$ sufficiently large (depending on $\epsilon$) we have $\fl{(2+\epsilon)m_k} \geq n_{k+1}$ and $\fl{(1/2+\epsilon)m_k} \geq n_{k-1}$. 
Therefore, 
\begin{align*}
 \liminf_{k\to\infty}  P^\zeta_{m_k} \left( \tau^\zeta_{n_{k+1}} < \sigma^\zeta_{n_{k-1}} \right)
&\geq \lim_{k\to\infty}  P^\zeta_{m_k} \left( \tau^\zeta_{\fl{(2+\epsilon)m_k}} < \sigma^\zeta_{\fl{(1/2+\epsilon)m_k}} \right) \\
&= \frac{1 - \left(\frac{1}{2}+\epsilon\right)^\gamma}{\left(2+\epsilon\right)^\gamma - \left(\frac{1}{2}+\epsilon\right)^\gamma},
\end{align*}
where the last equality follows from the diffusion approximation in \eqref{bblp-da} and an explicit hitting probability computation for the limiting diffusion using the fact that $(Z_t^{(2-2\gamma)})^\gamma$ is a martingale. 
Taking $\epsilon\to 0$ we get that $\liminf_{k\to\infty}  P^\zeta_{m_k} \left( \tau^\zeta_{n_{k+1}} < \sigma^\zeta_{n_{k-1}} \right)\geq \frac{1-(1/2)^\gamma}{2^\gamma-(1/2)^\gamma} > \frac{1}{2}$, where the last inequality follows from the assumption that $\gamma<0$. This is enough to prove \eqref{bias}. 

For \eqref{undershoot}, first note that 
\begin{align}
& P^\zeta_{m_k} \left( \zeta_{\sigma^\zeta_{n_{k-1}} \wedge \tau^\zeta_{n_{k+1}} } <  m_{k-1} \right) \nonumber \\
&\qquad = \sum_{i\geq 1} 
\sum_{z \in (n_{k-1},n_{k+1})} 
P^\zeta_{m_k} \left( \sigma^\zeta_{n_{k-1}} \wedge \tau^\zeta_{n_{k+1}} = i, \, \zeta_{i-1} = z, \, \zeta_i < m_{k-1} \right) \nonumber \\
&\qquad = \sum_{i\geq 1} \sum_{z \in (n_{k-1},n_{k+1})} P^\zeta_{m_k} \left( \sigma^\zeta_{n_{k-1}} \wedge \tau^\zeta_{n_{k+1}} \geq i, \, \zeta_{i-1} = z \right) P^\zeta_z \left( \zeta_1 < m_{k-1} \right) \nonumber \\
&\qquad \leq P^\zeta_{n_{k-1}+1} \left( \zeta_1 < m_{k-1} \right) E^\zeta_{m_k}\left[ \sigma^\zeta_{n_{k-1}} \wedge \tau^\zeta_{n_{k+1}} \right] \label{undershoot1}
\end{align}
For the first term on the right in \eqref{undershoot1}, note that given $\zeta_0 = n_{k-1}+1$ then $\zeta_1$ has the same distribution as $\mathfrak{R}_{\tau^\mathfrak{B}_{n_{k-1}+2}} = \D_{\tau^\mathfrak{B}_{n_{k-1}+2}} + n_{k-1}+2$. Therefore, it follows from Lemma \ref{Lemma1} that for $k$ sufficiently large there are constants $c,C>0$ such that 
\begin{equation}\label{undershoot2}
 P^\zeta_{n_{k-1}+1} \left( \zeta_1 < m_{k-1} \right)
 = P\left( \D_{\tau^\mathfrak{B}_{n_{k-1}+2}} \leq -\fl{n_{k-1}^{2/3}} - 2 \right) \leq C e^{-c n_{k-1}^{1/3}}. 
\end{equation}
It remains only to prove that the expected value in \eqref{undershoot1} is bounded above by $C n_k$. To this end, it is enough to show that there exists a $c \in (0,1)$ and $k_2\in \N$ such that
\begin{equation}\label{noescape}
 P_{m_k} \left( \sigma^\zeta_{n_{k-1}} \wedge \tau^\zeta_{n_{k+1}} > \ell n_k \right) \leq (1-c)^\ell, \quad \text{for all } k\geq k_2 \text{ and } \ell \geq 1. 
\end{equation}
To see this, note that the Markov property implies that 
\begin{equation}\label{noescape1}
  P_{m_k}^\zeta \left( \sigma^\zeta_{n_{k-1}} \wedge \tau^\zeta_{n_{k+1}} > \ell n_k \right) \leq \left( \max_{z \in (n_{k-1},n_{k+1})} P_z^\zeta\left( \sigma^\zeta_{n_{k-1}} \wedge \tau^\zeta_{n_{k+1}} > n_k \right)   \right)^\ell. 
\end{equation}
If we let $z_k \in (n_{k-1},n_{k+1})$ be one of the maximizers in the right side, then by taking subsequences as $k\to\infty$ so the right side achieves the limsup and then taking a further subsequence so that $z_k/n_k \to y \in [1/2,2]$ we can apply the diffusion approximation of the BLP $\zeta$ by $Z^{(2-2\gamma)}$ to obtain that 
\begin{align*}
& \limsup_{k\to\infty}  \max_{z \in (n_{k-1},n_{k+1})} P_z^\zeta\left( \sigma^\zeta_{n_{k-1}} \wedge \tau^\zeta_{n_{k+1}} > n_k \right) 
 \\
 &\qquad \leq \sup_{y \in [1/2,2]} P_y^{Z^{(2-2\gamma)}}\left( \sigma_{1/2}^{Z^{(2-2\gamma)}} \wedge \tau_2^{Z^{(2-2\gamma)}} > 1 \right) < 1. 
\end{align*}
Together with \eqref{noescape1} this is enough to prove \eqref{noescape}. 
\end{proof}

\section{Proofs of technical results - asymptotically free case}\label{app:afc}

\noindent(i) \textbf{Process level tightness of extrema.}
\begin{proof}[Proof of Proposition \ref{minmaxtight}.]
 We will only give the proof for the tightness of the sequence $\mathcal{I}^X_n$ since the proof is similar for $\mathcal{S}^X_n$. 
It is enough to show that 
\[
 \lim_{\delta \to 0} \limsup_{n\to\infty} P\left( \sup_{\substack{k,\ell \leq nt \\|k-\ell|\leq n\delta}} |I^X_k - I^X_\ell| \geq 2 \epsilon \sqrt{n} \right) = 0, \quad \forall \epsilon>0. 
\]
To this end, note that if the running minimum decreases by at least $2 \epsilon \sqrt{n}$ in less than $n\delta$ steps, then there must be some interval $[-m\fl{\epsilon\sqrt{n}}, -(m-1)\fl{\epsilon\sqrt{n}} ]$ with $m\geq 1$ which the random walk crosses from right to left in less than $n\delta$ steps.
Moreover, note that this interval must have $m \leq \lceil t/\delta \rceil$ since otherwise it will take more than time $nt$ to cross all these intervals. 
Therefore,
\begin{multline*}
 P\left( \sup_{\substack{k,\ell \leq nt \\|k-\ell|\leq n\delta}} |I^X_k - I^X_\ell| \geq 2 \epsilon \sqrt{n} \right)
\leq \sum_{m=1}^{\lceil t/\delta \rceil } P\left( \tau_{-m\fl{\epsilon\sqrt{n}},1}  - \tau_{-(m-1)\fl{\epsilon\sqrt{n}},1}  \leq \delta n \right) \\
 \leq 
\sum_{m=1}^{\lceil t/\delta \rceil } P\left( 2 \sum_{x=-m\fl{\epsilon \sqrt{n}} + 1}^{-(m-1)\fl{\epsilon\sqrt{n}}} \mathcal{E}^{\tau_{-m\fl{\epsilon\sqrt{n}},1} }(x) \leq \delta n \right) =
\lceil t/\delta \rceil  P^\zeta_0 \left( 2 \sum_{i=1}^{\fl{\epsilon\sqrt{n}}} \zeta_i \leq \delta n \right), 
\end{multline*}
where the last equality follows from \eqref{zetaE}. 
 By Proposition \ref{prop:grkaf}(1), this upper bound converges  to 
$\lceil t/\delta \rceil P\left( \int_0^1 Z^{(2-2\gamma)}(t) \, dt < \frac{\delta}{2\epsilon^2} \mid Z^{(2-2\gamma)}(0) = 0 \right)$ as $n\to\infty$. 
To finish we need to prove that this expression vanishes as
$\delta \to 0$, and this is accomplished by the following lemma.
\begin{lemma}
 There exist constants $C,c>0$ (depending only on $\gamma$) such that 
\[
 P\left( \int_0^1 Z^{(2-2\gamma)}(t) \, dt < x \mid Z^{(2-2\gamma)}(0) = 0 \right) \leq C e^{-c x^{-1/2}}, \quad \forall x>0. 
\]
\end{lemma}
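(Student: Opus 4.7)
The plan is to reduce the bound to the classical Pitman--Yor Laplace transform of an integrated squared Bessel process. Because the standing assumption $\gamma < 1$ gives $\delta := 2-2\gamma > 0$, the process $Y_s := 2Z^{(2-2\gamma)}(s)$ is a BESQ$^\delta$ started from $0$ in the usual normalization. For such a process the Laplace transform of the time integral is explicit: for every $\mu \geq 0$,
\[
E\bigl[e^{-\mu \int_0^1 Y_s\,ds}\bigr] \;=\; \cosh(\sqrt{2\mu})^{-\delta/2}.
\]
Substituting $\mu = \lambda/2$ and noting $\int_0^1 Y_s\,ds = 2\int_0^1 Z^{(2-2\gamma)}(s)\,ds$, I get
\[
E\bigl[e^{-\lambda\int_0^1 Z^{(2-2\gamma)}(s)\,ds}\bigr] \;=\; \cosh(\sqrt{\lambda})^{-(1-\gamma)}, \qquad \lambda \geq 0.
\]

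With the Laplace transform in hand, I would apply an exponential Markov (Chernoff) inequality and then optimize. Using $\cosh u \geq \tfrac12 e^{u}$ for $u \geq 0$,
\[
P\!\left(\int_0^1 Z^{(2-2\gamma)}(s)\,ds < x\right) \leq e^{\lambda x}\,\cosh(\sqrt{\lambda})^{-(1-\gamma)} \leq 2^{1-\gamma}\,\exp\bigl(\lambda x - (1-\gamma)\sqrt{\lambda}\bigr)
\]
for every $\lambda > 0$. Minimizing the exponent in $\sqrt{\lambda}$ at $\sqrt{\lambda} = (1-\gamma)/(2x)$ produces
\[
P\!\left(\int_0^1 Z^{(2-2\gamma)}(s)\,ds < x\right) \leq 2^{1-\gamma}\,\exp\!\left(-\frac{(1-\gamma)^2}{4x}\right),
\]
valid for $x$ in $(0,1]$, say; this is in fact strictly stronger than the target $Ce^{-c x^{-1/2}}$. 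For $x$ bounded away from $0$ the probability is of course at most $1$, which the right-hand side dominates once $C$ is taken large enough. The only substantive input is the quoted Pitman--Yor formula, so I foresee no real obstacle -- the whole argument is a one-line application of the exponential moment.
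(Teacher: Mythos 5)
Your proof is correct and takes a genuinely different route from the paper's. The paper proves the lemma by the scaling relation $\int_0^1 Z^{(2-2\gamma)}(t)\,dt \overset{\text{Law}}{=} x\int_0^{x^{-1/2}}Z^{(2-2\gamma)}(t)\,dt$ (started from $0$), subdivides $[0,x^{-1/2}]$ into unit intervals, and uses the Markov property together with monotonicity of BESQ in the initial condition to bound the probability by $P_0(\int_0^1 Z^{(2-2\gamma)} < 1)^{\fl{x^{-1/2}}}$; the only non-trivial input is that this base probability is $<1$. You instead invoke the closed-form Pitman--Yor Laplace transform $E_0[e^{-\mu\int_0^1 Y_s\,ds}]=\cosh(\sqrt{2\mu})^{-\delta/2}$ for $Y=\text{BESQ}^\delta$ and apply a Chernoff bound. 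The trade-offs: your method yields the strictly stronger tail $2^{1-\gamma}\exp\bigl(-(1-\gamma)^2/(4x)\bigr)$ (i.e.\ $e^{-c/x}$ rather than $e^{-c/\sqrt{x}}$), but requires quoting the exact Laplace transform, whereas the paper's argument is more elementary and self-contained, relying only on scaling, the Markov property, and stochastic monotonicity. Both use $\gamma<1$ crucially (you need $\delta>0$ and the sign of the exponent $-(1-\gamma)$ when applying $\cosh u \geq \tfrac12 e^u$; the paper needs $\delta>0$ for $Z^{(2-2\gamma)}$ to be a nondegenerate BESQ). Either argument proves the stated lemma.
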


\begin{proof}
By the scaling property of BESQ processes we have for any $a>0$ that
\[
 \int_0^1 Z^{(2-2\gamma)}(t)\, dt = a^2 \int_0^{1/a} \frac{Z^{(2-2\gamma)}(at)}{a} \, dt \overset{\text{Law}}{=} a^2 \int_0^{1/a} Z^{(2-2\gamma)}(t) \, dt,  
\]
where the last equality in law is when the Bessel processes on both sides are started at $Z^{(2-2\gamma)}(0) = 0$. 
Applying this with $a = \sqrt{x}$ gives
\begin{align*}
& P\left( \int_0^1 Z^{(2-2\gamma)}(t) \, dt < x \mid Z^{(2-2\gamma)}(0) = 0 \right)
= P\left( \int_0^{x^{-1/2}} Z^{(2-2\gamma)}(t) \, dt < 1 \mid Z^{(2-2\gamma)}(0) = 0 \right) \\
&\hspace{1in} \leq P \left( \int_{i-1}^i Z^{(2-2\gamma)}(t) \, dt < 1, \, \text{for } 1\leq i \leq \fl{x^{-1/2}} \mid Z^{(2-2\gamma)}(0) = 0 \right)\\
&\hspace{1in} \leq P\left( \int_0^1 Z^{(2-2\gamma)}(t) \, dt < 1 \mid Z^{(2-2\gamma)}(0) = 0 \right)^{\fl{x^{-1/2}}}, 
\end{align*}
where the last equality follows from the Markov property and the monotonicity of BESQ processes with respect to their initial condition. 
The proof of the lemma is then finished by noting that $P_0\left( \int_0^1 Z^{(2-2\gamma)}(t) \, dt < 1 \right) < 1$. 
\end{proof}
The proof of Proposition~\ref{minmaxtight} is now complete.
\end{proof}

\noindent(ii) \textbf{Control of the number of rarely visited sites.}
The proof of Lemma \ref{lem:smalllt} below follows a strategy
that is similar to the proofs of analogous statements in
\cite{kmLLCRW,kpERWPCS,kmpCRW} but is somewhat simpler.  Recall that
$\mathcal{L}(x,n-1) = \mathcal{E}^n(x-1) + \mathcal{E}^n(x) +
\ind{X_n<x\leq 0} - \ind{0<x\leq X_n}$ and that the ${\mathcal E}$
processes are related to the BLPs $\zeta$ and $\tilde\zeta$ as in
\eqref{zetaE} and \eqref{tzetaE}. Hence, the control of the number of
sites with small local times in Lemma \ref{lem:smalllt} can be
obtained by first proving similar results for the BLPs $\zeta$ and
$\tilde\zeta$.

Since the process $\tilde\zeta$ is absorbing at zero, we will only
need to bound the number of times it goes below $M$ before hitting
zero for the first time.  To this end, note that if the event
$\left\{ \sum_{i=0}^{\sigma_0^{\tilde\zeta}-1} \ind{\tilde\zeta_i \leq
    M} \geq k \right\}$ occurs, then the first $k-1$ times the process
$\tilde\zeta$ enters $[0,M]$ its next step must necessarily not be to
zero. Using the monotonicity of the process $\zeta$ with respect to
the initial condition, we have that
\begin{equation}\label{tz-slt}
 \sup_{m\geq 1} P_m^{\tilde{\zeta}}\left( \sum_{i=0}^{\sigma_0^{\tilde\zeta}-1} \ind{\tilde\zeta_i \leq M} \geq k \right) 
 \leq 
 \left( 1-P^{\tilde\zeta}_M(\tilde\zeta_0 = 0) \right)^{k-1}, 
\end{equation}
and note that $P^{\tilde\zeta}_M(\tilde\zeta_0 = 0)>0$ so upper bound in \eqref{tz-slt} decays exponentially in $k$ for $M$ fixed.

Since the process $\zeta$ is not absorbing at zero, we will need to
control the time the process $\zeta$ spends below $M$ up to a fixed
time. The following lemma will be sufficient for our purposes.

\begin{lemma}\label{lem:zetasmall}
Let $w(\cdot)$ be as in \eqref{w} with $\alpha=0$.
Then for any $M,K>0$ and $b > \frac{\gamma\vee 0}{2} =: \frac{\gamma_+}{2}$
there are constants $C,c,r>0$ (depending on $K,M$ and $b$) such that
\[
\sup_{m\geq 0} P^{\zeta}_m \left( \sum_{i\leq K \sqrt{n} } \ind{\zeta_i \leq M} > n^b \right)  \leq C e^{-c n^r}, \qquad \forall n\geq 1. 
\]
\end{lemma}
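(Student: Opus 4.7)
The plan is to decompose the trajectory of $\zeta$ into alternating sojourns in $[0,M]$ and excursions above $M$, and to bound separately the total length of the sojourns and their number up to time $K\sqrt{n}$. By the stochastic monotonicity of $\zeta$ in its initial condition (inherited from the Rubin construction of the underlying P\'olya urn, since $\mathfrak{R}^-_{\tau^{\mathfrak{B}^-}_{i+1}}$ is nondecreasing in $i$ when built from a common collection of exponentials), the supremum over $m$ is attained at $m=0$, so it suffices to treat $\zeta_0=0$. Set $\sigma_0=0$ and recursively $\tau_{k+1}=\inf\{i>\sigma_k:\zeta_i>M\}$, $\sigma_{k+1}=\inf\{i>\tau_{k+1}:\zeta_i\le M\}$. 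Writing $E_k=\tau_{k+1}-\sigma_k$, $R_k=\sigma_{k+1}-\tau_{k+1}$, and $N:=\max\{k\ge 0:\sigma_k\le K\sqrt{n}\}$, one has
\[
  \sum_{i\le K\sqrt{n}}\ind{\zeta_i\le M}\le\sum_{k=0}^{N}E_k,
\]
and I will prove that both $N\le n^b$ and $\sum_k E_k$ is a constant multiple of $N$ with probability $1-Ce^{-cn^r}$.

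The sojourn lengths are easy: since $\alpha=0$ the weights $w(\cdot)$ are uniformly bounded above and away from $0$, so the probability that the first $M+1$ draws of the underlying urn are all ``red'' is at least some $p=p(M)>0$ independently of $\zeta_0=j$. This gives $P(\zeta_1>M\mid\zeta_0=j)\ge p$ for all $j\ge 0$, and by the strong Markov property each $E_k$ is stochastically dominated by a $\mathrm{Geom}(p)$ variable. A standard Chernoff bound yields $P(\sum_{k=0}^{\fl{n^b}}E_k\ge 2n^b/p)\le e^{-cn^b}$.

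The substantive step is the bound on $N$. By monotonicity, each $R_k$ conditionally stochastically dominates $\sigma_M^\zeta$ started from $M+1$; together with the strong Markov property this produces a coupling of $\{R_k\}$ with i.i.d.\ copies $\{R_k^\ast\}$ of that hitting time satisfying $R_k\ge R_k^\ast$. An adaptation of the proof of Lemma \ref{lem:sztail} — using the diffusion approximation \eqref{bblp-da} with the dyadic levels $M+2^j$ in place of $2^j$ and the hitting-probability computation for $Z^{(2-2\gamma)}$ between levels $1$ and $2$ (via the $Z^\gamma$ martingale for $\gamma>0$ and the $\log Z$ martingale for $\gamma=0$) — gives, for every $\epsilon>0$,
\[
  P^\zeta_{M+1}(\sigma_M^\zeta>t)\ge C_\epsilon\,t^{-\gamma_+-\epsilon}\quad\text{for all sufficiently large }t.
\]
Integration by parts then yields $E[e^{-\lambda R_k^\ast}]\le 1-c'\lambda^{\gamma_++\epsilon}$ for small $\lambda>0$, and the Chernoff inequality
\[
  P\Big(\sum_{k=1}^{\fl{n^b}}R_k^\ast\le K\sqrt{n}\Big)\le\exp\bigl(\lambda K\sqrt{n}-c'n^b\lambda^{\gamma_++\epsilon}\bigr),
\]
optimized in $\lambda$, equals $\exp(-c''n^r)$ with $r=(b-(\gamma_++\epsilon)/2)/(1-\gamma_+-\epsilon)>0$, provided $\epsilon$ is taken small enough (possible since $b>\gamma_+/2$). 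Since $\{N>n^b\}\subseteq\{\sum_{k=1}^{\fl{n^b}}R_k\le K\sqrt{n}\}\subseteq\{\sum_{k=1}^{\fl{n^b}}R_k^\ast\le K\sqrt{n}\}$, this yields the desired bound on $N$.

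The case $\gamma<0$ is handled separately via Lemma \ref{lem:noreturn}: from any value $\ge 4M$ the process stays above $M$ forever with probability $\ge c>0$, and by the diffusion approximation (with $Z^{(2-2\gamma)}$ transient for $\gamma<0$) started from $M+1$ there is positive probability of reaching $4M$ before returning to $M$; hence $N$ is stochastically dominated by a geometric variable, yielding exponential tails directly. The main technical obstacle throughout is the approximate independence of the $R_k$, whose starting points $\zeta_{\tau_{k+1}}$ depend on the entire past; this is resolved by the stochastic monotonicity of $\zeta$, which reduces the problem to a classical Chernoff estimate for a sum of i.i.d.\ heavy-tailed positive random variables.
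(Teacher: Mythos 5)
Your proof is correct and takes a genuinely different route from the paper's. The paper decomposes the trajectory at the renewal times---the returns to $\{0\}$---which makes the excursions i.i.d.; it then bounds the number of returns using Lemma~\ref{lem:sztail} directly and the time spent $\leq M$ per excursion by a geometric argument, finishing with the elementary estimate $(1-p)^m\leq e^{-pm}$. You instead decompose at the boundary $\{M\}$, which costs you the i.i.d.\ structure (the excursion starting points $\zeta_{\tau_{k+1}}$ are history-dependent); you restore it correctly through the monotone coupling with i.i.d.\ copies of $\sigma_M^\zeta$ started from $M+1$. In exchange you need the shifted tail bound $P_{M+1}^\zeta(\sigma_M^\zeta>t)\geq C_\epsilon t^{-\gamma_+-\epsilon}$, which is a genuine (if routine) adaptation of Lemma~\ref{lem:sztail}: the dyadic argument survives because the fixed level $M$ is negligible against $2^{k-1}$ in the diffusion scaling, i.e.\ one bounds $P^\zeta_{2^{k-1}}(\tau^\zeta_{2^k}<\sigma_M^\zeta)\geq P^\zeta_{2^{k-1}}(\tau^\zeta_{2^k}<\sigma^\zeta_{x2^{k-1}})$ for $k$ large and then lets $x\to 0$. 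Your Chernoff tilt requires $\gamma_++\epsilon<1$, which holds by the standing hypothesis $\gamma<1$, and the optimizing $\lambda$ is only small when $b<1/2$; together with the multiplicative overhead from the sojourn lengths $E_k$ being geometric rather than bounded, you should run the argument with a fixed auxiliary $b'\in(\gamma_+/2,\min(b,1/2))$ and observe that $2n^{b'}/p<n^b$ for $n$ large---a routine fix. Net: the paper's anchoring at $\{0\}$ makes the counting elementary and lets it reuse Lemma~\ref{lem:sztail} verbatim, while your decomposition avoids any special role for the state $0$ at the cost of a moment-generating-function estimate and the shifted escape-probability lemma.
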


\begin{proof}
Since the process $\zeta$ is monotone with respect to the initial condition, it is enough to prove the upper bound only for the case $\zeta_0 = 0$. 
We begin by fixing some $d \in (\frac{\gamma_+}{2},b)$. 
Now, if the event $\{ \sum_{i\leq K\sqrt{n} } \ind{\zeta_i \leq M} \geq n^b \}$ occurs then either (1) the process $\zeta$ returns to 0 at least $\fl{n^d}$ times in the first $K\sqrt{n}$ steps of the Markov chain or 
(2) in one of the first $\fl{n^d}$ excursions from 0 the process spends at least $n^{b-d}$ steps below $M$. 
Therefore, it follows 
that
\begin{align*}
& P^{\zeta}_0 \left( \sum_{i\leq K\sqrt{n}} \ind{\zeta_i \leq M} \geq n^b \right)  \\
&\qquad \leq P^{\zeta}_0 \left( \sum_{i\leq K\sqrt{n}} \ind{\zeta_i = 0} \geq n^d \right) + n^d P^{\zeta}_0 \left( \sum_{i=0}^{\sigma^\zeta_0 -1} \ind{\zeta_i \leq M} \geq n^{b-d} \right) \\
&\qquad \leq \left( 1- P^\zeta_0( \sigma^\zeta_0 \geq K\sqrt{n} ) \right)^{\fl{n^d}} + n^d  \left( 1 - P^{\zeta}_M(\zeta_1 = 0) \right)^{n^{b-d}-1}, 
\end{align*}
where the second term in the last inequality follows from a similar argument as the one preceding \eqref{tz-slt}.
We will handle the first term in the last line differently depending on whether $\gamma\in [0,1)$ or $\gamma<0$.
If $\gamma \in[0,1)$, then choosing an $\epsilon>0$ small enough so that
$\frac{\gamma+\epsilon}{2} < d$ it follows from Lemma \ref{lem:sztail} that 
$ \left( 1- P^\zeta_0( \sigma^\zeta_0 \geq K\sqrt{n} ) \right)^{\fl{n^d}} \leq C' e^{-c' n^{d-\frac{\gamma+\epsilon}{2}}}$ for some $C',c'>0$ (depending on $K$ and $\epsilon$).
On the other hand, if $\gamma < 0$ then it follows from Lemma \ref{lem:noreturn} that $\left( 1- P^\zeta_0( \sigma^\zeta_0 \geq K\sqrt{n} ) \right)^{\fl{n^d}}
\leq \left( 1- P^\zeta_0( \sigma^\zeta_0 = \infty ) \right)^{\fl{n^d}} \leq (1-c)^{\fl{n^d}}$ for some $c>0$. 
In either case, these bounds on the first term are enough to complete the proof. 
\end{proof}

\begin{proof}[Proof of Lemma \ref{lem:smalllt}]
First of all, note that $\mathcal{L}(x,k-1) \geq \max\{ \mathcal{D}^k(x), \mathcal{E}^k(x)\}$. Therefore, 
\begin{align*}
  &P\left( \sup_{k\leq nt} \sum_{x \in [I_{k-1}^X, S_{k-1}^X]} \ind{\mathcal{L}(x,k-1) \leq M } \geq 4 n^{b} \right) \\
&\leq P\left( \sup_{k\leq nt}  \sum_{x \in [X_{k}, S_{k}^X]} \ind{\mathcal{E}^k(x) \leq M } \geq  2 n^b \right) 
+ P\left( \sup_{k\leq nt}  \sum_{x \in [I_{k}^X, X_{k}]} \ind{\mathcal{D}^k(x) \leq M} \geq  2 n^b \right). 
\end{align*}
We will show only that the first probability in the last line vanishes as $n\to\infty$ since the second probability can be handled by a symmetric argument. 

Recall that 
  $\tau_{z,m} = \min\{ n\geq 0: \mathcal{L}(x,n) = m \} $ denotes the
time that the random walk reaches site $z\in \Z$ for the $m$-th time.
Note that if $\max_{k\leq nt} |X_k| \leq K\sqrt{n}$ then for every
$k\leq nt$ we have $\tau_{z,m} = k$ for some $|z|\leq K\sqrt{n}$ and
$m\leq n/2$.  Since $\max_{k\leq nt} |X_k|$ is tight (see
\cite[Corollary 1A]{tGRK} or Proposition \ref{minmaxtight} above),
given any $\epsilon>0$ we can choose $K$ large enough so that
$P(\max_{k\leq nt} |X_k| > K \sqrt{n}) < \epsilon$ and thus
\begin{align*}
& P\left( \sup_{k\leq nt}  \sum_{x \in [X_{k}, S_{k}^X]} \ind{\mathcal{E}^k(x) \leq M } \geq  2 n^b \right) \\
&\qquad \leq \epsilon + \sum_{\substack{|z|\leq K\sqrt{n}\\1\leq m \leq n/2 \\ k\leq nt }} P\left( \tau_{z,m}=k , \,  \sum_{x \in [z, S_k^X]} \ind{\mathcal{E}^{\tau_{z,m}}(x) \leq M } \geq  2 n^b \right) \\
&\qquad \leq \epsilon + \sum_{\substack{|z|\leq K\sqrt{n}\\1\leq m \leq n/2\\ k\leq nt }} \left\{ P\left( \sum_{x \in [z\wedge 0, -1]} \ind{\mathcal{E}^{\tau_{z,m}}(x) \leq M } \geq   n^b \right) \right. \\
&\hspace{2in} \left.
+ P\left( \sum_{x \geq z\vee 0 } \ind{0 < \mathcal{E}^{\tau_{z,m}}(x) \leq M } \geq   n^b -1 \right) \right\}.
\end{align*}
Since the process $\{ \mathcal{E}^{\tau_{z,m}}(x) \}_{x \in [z\wedge 0, -1]}$ is distributed like the BLP $\zeta$ and the process $\{\mathcal{E}^{\tau_{z,m}}(x) \}_{x \geq z \vee 0}$ is distributed like the BLP $\tilde\zeta$ 
(both with random initial conditions), we have that this last sum is bounded above by 
\begin{align*}
(K+1) t n^{5/2} \left\{ \sup_{m\geq 0} P^\zeta_m\left( \sum_{i\leq K\sqrt{n}} \ind{\zeta_i \leq M } \geq  n^b \right) + \sup_{m\geq 1} P_m^{\tilde\zeta} \left( \sum_{i=0}^{\sigma_0^{\tilde\zeta} -1 } \ind{\tilde\zeta_i \leq M } \geq  n^b  -1 \right) \right\}.
\end{align*}
It then follows from \eqref{tz-slt} and Lemma \ref{lem:zetasmall} that
this last expression vanishes as $n\to\infty$ for any fixed $K,M,t>0$
and $b > \frac{\gamma_+}{2}$.  Since $\epsilon>0$ was arbitrary, this
completes the proof of the lemma.
\end{proof}

\section{Proofs of technical results - polynomially self-repelling case}\label{app:psr}

\begin{proof}[Proof of Lemma~\ref{cont}]
We start with the proof of the $P^{W_\alpha}$-a.s.\ continuity
    of $G_{\delta,M}$. Fix $\delta>0$ and $M\ge 0$. Let ${\mathcal C}$ be
    a subset of $C([0,\infty))$ which consists of functions $\omega$
    such that
\begin{enumerate}[(i)]
\item $\text{meas}\{t\ge 0:\omega(t)\in[0,\delta]\}=\infty$;
\item for $\ell\in \{M,M+1\}$, ${\mathcal T}_{\delta,\ell-}(\omega)={\mathcal T}_{\delta,\ell}(\omega)$;
\item $\text{meas}\{t\ge 0:\omega(t)\in\{0,\delta,1\}\}=0$, 
\end{enumerate}
where $\text{meas}(\cdot)$ denotes Lebesgue measure.
Property (i) guarantees that ${\mathcal T}_{\delta,\ell}(\omega)<\infty$
for all $\ell\ge 0$. Property (ii) ensures that
${\mathcal T}_{\delta,\ell}(\omega)$ is continuous in $\ell$ at
$\ell\in\{M,M+1\}$ (it is right-continuous in $\ell$ by the
definition). Observe that $P^{W^\alpha}({\mathcal C})=1$.

{\em Step 1.} Due to (i)--(iii), for $\ell\in\{M,M+1\}$, the functional
${\mathcal T}_{\delta,\ell}$ is continuous on $D([0,\infty))$ at every
$\omega\in{\mathcal C}$. More precisely, given
$\omega\in{\mathcal C}$ and $\ell\in\{M,M+1\}$, put
$R(\ell,\omega)={\mathcal T}_{\delta,\ell}(\omega)+1$. Then
$\forall \epsilon>0$ there is a $\lambda>0$ (depending on $\epsilon, \delta$, and $\omega$) such that for each
$\omega'\in D([0,\infty))$ satisfying
$\sup_{t\le R(\ell,\omega)}|\omega'(t)-\omega(t)|<\lambda$, we have
$|{\mathcal T}_{\delta,\ell}(\omega)-{\mathcal
  T}_{\delta,\ell}(\omega')|<\epsilon$.

{\em Step 2.} Fix $\epsilon>0$, $\omega\in{\mathcal C}$, and let $R=R(M+1,\omega)$. Then for every $\omega'\in D([0,\infty))$ such that
$\sup_{t\le R}|\omega'(t)-\omega(t)|<\lambda$ where $\lambda$ is sufficiently small we get
\begin{align*}
  \left|G_{\delta,M}(\omega)-G_{\delta,M}(\omega')\right|\le &\int_{{\mathcal T}_{\delta,M}(\omega)}^{{\mathcal T}_{\delta,M+1}(\omega)}\left|\indf{[0,1]}(\omega(t))-\indf{[0,1]}(\omega'(t))\right|dt\\ & \makebox[1.5cm]{\ }+|{\mathcal T}_{\delta,M}(\omega)-{\mathcal T}_{\delta,M}(\omega')|+|{\mathcal T}_{\delta,M+1}(\omega)-{\mathcal T}_{\delta,M+1}(\omega')|\\ \overset{\textit{Step 1}}{\le}&\int_{{\mathcal T}_{\delta,M}(\omega)}^{{\mathcal T}_{\delta,M+1}(\omega)}\left|\indf{[0,1]}(\omega(t))-\indf{[0,1]}(\omega'(t))\right|dt+2\epsilon.
\end{align*}
We are left to estimate the first term in the right hand side. It is equal to
\begin{align*} 
&\int_{{\mathcal T}_{\delta,M}(\omega)}^{{\mathcal T}_{\delta,M+1}(\omega)}\left|\indf{[0,1]}(\omega(t))-\indf{[0,1]}(\omega'(t))\right|\indf{[-\lambda,\lambda]\cup[1-\lambda,1+\lambda]}(\omega(t))\,dt \\ 
&\le \int_{{\mathcal T}_{\delta,M}(\omega)}^{{\mathcal T}_{\delta,M+1}(\omega)}\indf{[-\lambda,\lambda]\cup[1-\lambda,1+\lambda]}(\omega(t))\,dt\le \int_0^R\indf{[-\lambda,\lambda]\cup[1-\lambda,1+\lambda]}(\omega(t))\,dt\overset{(iii)} <\epsilon, 
\end{align*}
for sufficiently small $\lambda$. This finishes the proof of the $P^{W^\alpha}$-a.s.\ continuity of $G_{\delta,M}$.

\smallskip

We recall that $\tau^{W_\alpha}_1=\inf\{t> 0: W^\alpha(t)=1\}$ and
partition $\Omega$ into three events
\[A_1=\{{\mathcal T}_{\delta,M}\ge \tau^{W_\alpha}_1\},\ \
  A_2=\{\tau^{W_\alpha}_1\ge {\mathcal T}_{\delta,M+1}\},\ \
  A_3=\{{\mathcal T}_{\delta,M}<\tau^{W_\alpha}_1<{\mathcal
    T}_{\delta,M+1}\}.\] We note that on $A_1$ we can replace
$W_\alpha$ with the corresponding multiple $B^r_\alpha$ of reflected
Brownian motion, which starts at a random point $x\in[0,\delta]$ and
is reflected at $0$ and $1$, and integrate from $0$ to
${\mathcal T}_{\delta,1}$ (since for $B^r_\alpha$ the functional
$G_{\delta,M}$ depends on $M$ only through the value of
$B^r_\alpha({\mathcal T}_{\delta,M})\in[0,\delta]$). This gives us the
following estimate:
      \begin{equation}
        \label{a1}
        E^{W_\alpha}\left[(G_{\delta,M})^p;A_1\right]\le \max_{x\in[0,\delta]}E_x^{B^r_\alpha}\left[(G_{\delta,0})^p\right]\le \max_{x\in[0,1]}E_x^{B^r_\alpha}\left[({\mathcal T}_{\delta,1})^p\right].
      \end{equation}
      Note that this bound does not depend on $M\ge 0$. On
      $A_2$, we simply integrate from $0$ to $\tau^{W_\alpha}_1$ and
      then use the Ray--Knight theorem for $W_\alpha$:
      \begin{align}
        E^{W_\alpha}\left[(G_{\delta,M})^p;A_2\right]
        &\le E\left[\left(\int_0^{\tau^{W_\alpha}_1}\indf{[0,1]}(W_\alpha(s))\,ds\right)^p\right] \nonumber \\
        &=E\left[\left(\int_0^1Z^{(\alpha,1)}(x)\,dx\right)^p\right]=:C_{\alpha,p},         \label{a2}
      \end{align}
      a constant which does not depend on $M$ or $\delta$. Finally, on
      $A_3$ we have
      \[\int_{{\mathcal T}_{\delta,\ell}}^{{\mathcal
            T}_{\delta,\ell+1}}\indf{[0,1]}(W_\alpha(s))\,ds\le
        \left(\int_0^{\tau^{W_\alpha}_1}+\int_{\tau^{W_\alpha}_1}^{{\mathcal
              T}_{\delta,M+1}}\right)\indf{[0,1]}(W_\alpha(s))\,ds.\]
      Using the elementary inequality $(a+b)^p\le 2^{p-1}(a^p+b^p)$,
      \eqref{a2}, and the estimate similar to \eqref{a1} to treat the last
      integral we get that for all $\delta\in(0,1/2]$ and $M\ge 0$
\begin{equation}
  \label{a3}
  E^{W_\alpha}\left[(G_{\delta,M})^p;A_3\right]\le 2^{p-1}\left(C_{\alpha,p}+\max\limits_{x\in[0,1]}E_x^{B^r_\alpha}\left[({\mathcal T}_{\delta,1})^p\right]\right).
\end{equation}
Collecting \eqref{a1}-\eqref{a3} together we conclude that 
\begin{equation}
  \label{a123}
  \sup\limits_{M\ge 0,\,
        \delta\in(0,\delta_0]}E^{W_\alpha}\left[(G_{\delta,M})^p\right]\le (2^{p-1}+1)\sup\limits_{\delta\in(0,\delta_0]}\left(C_{\alpha,p}+\max\limits_{x\in[0,1]}E_x^{B^r_\alpha}\left[({\mathcal T}_{\delta,1})^p\right]\right).
\end{equation}
We just need to get a bound on
$\max\limits_{x\in[0,1]}E_x^{B^r_\alpha}\left[({\mathcal
    T}_{\delta,1})^p\right]$ uniformly over all small
$\delta>0$. Since $L^{B^r_\alpha}_t(0)\to \infty$ as $t\to\infty$ with
probability 1, there is a $t>0$ such that
\[P_0^{B^r_\alpha}\left(L_t(0)>2\right)\ge \frac34,\ \ \text{which implies} \
  P_0^{B^r_\alpha}\left(\tau^{L(0)}_2\le t\right)\ge \frac34.\] Since
$\mathcal{T}_{\delta,1} \Longrightarrow \tau_1^{L(0)}$ as
$\delta\to 0$, we may choose $\delta_0>0$ so that
$P_0^{B^r_\alpha}\left({\mathcal T}_{\delta,1}<\tau^{L(0)}_2\right)\ge
3/4$ for all $\delta\in(0,\delta_0]$. This will imply that
\[P^{B^r_\alpha}_0\left({\mathcal T}_{\delta,1}\le t\right)\ge
  P^{B^r_\alpha}_0\left({\mathcal T}_{\delta,1}<\tau^{L(0)}_2\le
    t\right)\ge \frac12\ \ \text{for all }\delta\in(0,\delta_0].\] To
move the starting point from $0$ to $x$ we note that by the strong
Markov property,
\[P^{B^r_\alpha}_x\left({\mathcal T}_{\delta,1}\le
    t+1\right)\ge
  P^{B^r_\alpha}_1\left(\tau_0<1\right)P^{B^r_\alpha}_0\left({\mathcal
      T}_{\delta,1}\le
    t\right)\ge\frac12P_1^{B^r_\alpha}(\tau_0<1)=:c>0,
        \] 
for all $x \in [0,1]$, and thus again by the strong Markov property,
\[\max\limits_{x\in[0,1]}P^{B^r_\alpha}_x\left({\mathcal T}_{\delta,1}>
    n(t+1)\right)\le (1-c)^n,\ \ \text{for all }n\in\N.\] This
immediately gives the desired upper bound and completes the proof.
    \end{proof}

\begin{lemma}\label{calc}
  Let $Z^{(\alpha,0)}$ be the stochastic process defined by \eqref{Za0} which starts at $s>0$ and is absorbed upon hitting $0$. Then for all $y,\alpha\in[0,\infty)$, 
\[
 E\left( \int_0^y Z^{(\alpha,0)}(x)\,dx\right)=ys,\quad \Var{\left(\int_0^y Z^{(\alpha,0)}(x)\,dx\right)}=\frac{2y^3s}{3(1+2\alpha)}.
\]
\end{lemma}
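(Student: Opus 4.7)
The key observation is that with $\delta=0$ the SDE \eqref{Za0} becomes
\[
dZ^{(\alpha,0)}(x)=\frac{1}{\sqrt{2\alpha+1}}\sqrt{2Z^{(\alpha,0)}(x)}\,dB(x),
\]
so, absorption at $0$ respected, $Z^{(\alpha,0)}$ is a non-negative continuous local martingale with $Z^{(\alpha,0)}(0)=s$. Standard BESQ bounds (for instance $\sup_{x\le y}E[Z^{(\alpha,0)}(x)^k]<\infty$ for all $k\in\N$) make it a true martingale and legitimize all the interchanges below. Thus $E[Z^{(\alpha,0)}(x)]=s$ for every $x\ge 0$, and Fubini immediately gives
\[
E\Bigl[\int_0^y Z^{(\alpha,0)}(x)\,dx\Bigr]=\int_0^y s\,dx=ys.
\]

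For the variance, the first step is to compute $E[Z^{(\alpha,0)}(x)^2]$. Applying It\^o's formula to $f(z)=z^2$ gives
\[
dZ^{(\alpha,0)}(x)^2=2Z^{(\alpha,0)}(x)\,dZ^{(\alpha,0)}(x)+d\langle Z^{(\alpha,0)}\rangle_x=2Z^{(\alpha,0)}(x)\,dZ^{(\alpha,0)}(x)+\frac{2Z^{(\alpha,0)}(x)}{2\alpha+1}\,dx.
\]
Taking expectations (the stochastic integral is a true martingale here) and using $E[Z^{(\alpha,0)}(u)]=s$ yields
\[
E[Z^{(\alpha,0)}(x)^2]=s^2+\frac{2sx}{2\alpha+1}.
\]

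The second step is to observe that for $0\le x\le z$ the martingale property gives $E[Z^{(\alpha,0)}(z)\mid\mathcal{F}_x]=Z^{(\alpha,0)}(x)$, so
\[
E[Z^{(\alpha,0)}(x)Z^{(\alpha,0)}(z)]=E[Z^{(\alpha,0)}(x)^2]=s^2+\frac{2sx}{2\alpha+1}.
\]
Then, by Fubini and symmetry,
\[
E\Bigl[\Bigl(\int_0^y Z^{(\alpha,0)}(x)\,dx\Bigr)^{\!2}\Bigr]
=2\int_0^y\!\int_x^y\!\Bigl(s^2+\frac{2sx}{2\alpha+1}\Bigr)dz\,dx
=2\int_0^y(y-x)\Bigl(s^2+\frac{2sx}{2\alpha+1}\Bigr)dx.
\]
Using $\int_0^y(y-x)\,dx=y^2/2$ and $\int_0^y(y-x)x\,dx=y^3/6$, this equals $s^2 y^2+\dfrac{2sy^3}{3(2\alpha+1)}$. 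Subtracting $(ys)^2$ gives the claimed variance.

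There is no real obstacle; the only mild care required is to certify the martingale property and the interchange of expectation with the stochastic and Lebesgue integrals, which follows from the standard $L^p$ bounds on BESQ processes absorbed at $0$.
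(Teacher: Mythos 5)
Your proof is correct, and the computations check out. The route is genuinely different from the paper's, which is shorter: instead of first computing $E[Z^{(\alpha,0)}(x)^2]$ via It\^o's formula and then the covariance $E[Z^{(\alpha,0)}(x)Z^{(\alpha,0)}(z)]=E[Z^{(\alpha,0)}(x\wedge z)^2]$ by the martingale property and Fubini, the paper applies a stochastic Fubini (``integration by parts'') to write
\[
\int_0^y Z^{(\alpha,0)}(x)\,dx - ys = \frac{1}{\sqrt{2\alpha+1}}\int_0^y (y-x)\sqrt{2Z^{(\alpha,0)}(x)}\,dB(x),
\]
and then reads off the variance in one line from It\^o's isometry and $E[Z^{(\alpha,0)}(x)]=s$. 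Both approaches rest on the same underlying fact (with $\delta=0$ the process is a nonnegative $L^2$ martingale with quadratic variation $\frac{2}{2\alpha+1}\int Z\,dx$); the paper's version avoids ever computing the second moment of $Z^{(\alpha,0)}(x)$, while yours produces the covariance kernel $s^2+\frac{2s(x\wedge z)}{2\alpha+1}$ explicitly, which is slightly more information than is strictly needed but arguably more transparent. One minor point worth being explicit about: you need $E[\sup_{x\le y}Z^{(\alpha,0)}(x)^2]<\infty$ (or a localization argument) to justify that the local martingale $\int 2Z\,dZ$ arising from It\^o on $z\mapsto z^2$ is a true martingale; you flag this correctly, and the standard BESQ moment bounds do close that gap.
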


\begin{proof}
  The first statement is immediate from \eqref{Za0}. Integration by parts gives
\[
 \int_0^y Z^{(\alpha,0)}(x)\,dx-ys=\frac{1}{\sqrt{2\alpha+1}}\int_0^y(y-x)\sqrt{2Z^{(\alpha,0)}(x)}\,dB(x).
\]
By It\^o's isometry and again \eqref{Za0} we get that
\[
 \Var\left(\int_0^y Z^{(\alpha,0)}(x)\,dx\right) =\frac{2}{2\alpha+1}\int_0^y(y-x)^2E\left[ Z^{(\alpha,0)}(x) \right]dx=\frac{2y^3s}{3(1+2\alpha)}.
\]

\end{proof}

\begin{proof}[Proof of Lemma~\ref{elenatype}]
  Without loss of generality we can assume that $M\in\N$ and $y\ge 1$. Let
  \begin{align*}
  A^+&:=\bigg\{\sup_{\tau^{\B}_{Mn} \le i \leq \tau^{\B}_{(M+1)n}}
    (\D_i - \D_{\tau^{\B}_{Mn}})\ge y\sqrt{n}\bigg\}\\
  \text{and} \qquad A^-&:=\bigg\{\inf_{\tau^{\B}_{Mn} \le i \leq \tau^{\B}_{(M+1)n}}
    (\D_i - \D_{\tau^{\B}_{Mn}})\le -y\sqrt{n}\bigg\},
  \end{align*}
  and note that we
  need to bound $P(A^+ \cup A^- )$. Since
  \[P\left(A^+ \cup A^- \right)\le P\left(\left(A^+ \cup
      A^- \right)\cap\left\{|\D_{\tau^{\B}_{Mn}}|\le
      y\sqrt{Mn}\right\}\right)+P\left(|\D_{\tau^{\B}_{Mn}}|>
      y\sqrt{Mn}\right), \] and since the last probability is bounded as
  necessary by Lemma~\ref{Lemma1}, we only need to estimate the
  first term on the right hand side of the above inequality. We have
\begin{multline*}
P\left(\left(A^+ \cup A^- \right)\cap\left\{|\D_{\tau^{\B}_{Mn}}|\le y\sqrt{Mn}\right\}\right) \\
\le
    P\left(A^+ \, \big|\,|\D_{\tau^{\B}_{Mn}}|\le
    y\sqrt{Mn}\right)+P\left(A^- \,\big|\,|\D_{\tau^{\B}_{Mn}}|\le y\sqrt{Mn}\right).
\end{multline*}
  Due to the self-repelling property of
  the model, the probability of $A^+ $ is a non-increasing function of
  $\D_{\tau^{\B}_{Mn}}$ (recall that the discrepancy is the number of
  red balls drawn minus the number of blue balls drawn, and the more
  red balls drawn - the smaller the probability to draw a red ball in
  the future). Similarly, the probability of $A^- $ is a
  non-decreasing function of $\D_{\tau^{\B}_{Mn}}$. Therefore,
  \begin{multline*}
    P\left(\left(A^+ \cup A^- \right)\cap\left\{|\D_{\tau^{\B}_{Mn}}|\le y\sqrt{Mn}\right\}\right)\le \\ P\left(A^+ \,
    \big|\,\D_{\tau^{\B}_{Mn}}=-\fl{ y\sqrt{Mn}}\right)+P\left(A^- \,\big|\,\D_{\tau^{\B}_{Mn}}=
    \fl{y\sqrt{Mn}}\right).
  \end{multline*}
 The estimates of the last two probabilities
  are very similar, so we shall only bound the first of them.

  We think of $D=(\D_i)_{\tau^{\B}_{Mn}\le i\le \tau^{\B}_{(M+1)n}}$ as
  a nearest neighbor walk which goes up each time a red ball is drawn
  and down if a blue ball is drawn. The walk starts at
  $-\fl{y\sqrt{Mn}}$ at time $\tau^{\B}_{Mn}$.
  We note that every
  $D$-walk path in $A^+ $ must hit
  $a:=-\fl{ y\sqrt{Mn}}+\fl{y\sqrt{n}}$ in no more than
  $2(n-1)+\fl{y\sqrt{n}}$\footnote{At the hitting time of $a$ the
    number of up steps taken is equal to the number of down steps plus
    $\fl{y\sqrt{n}}$ but due to the restriction imposed by $A^+ $ the
    number of down steps taken is at most $n-1$, so the total number
    of steps taken up to this hitting time does not exceed
    $2(n-1)+\fl{y\sqrt{n}}$.} steps, and thus we shall estimate the
  probability of the ``larger event''
  \[A^+_1:=\bigg\{\sup_{\tau^{\B}_{Mn} \le i \leq \tau^{\B}_{Mn}+2n+\fl{y\sqrt{n}}}
    (\D_i - \D_{\tau^{\B}_{Mn}})\ge
    y\sqrt{n}\bigg\}\supset A^+.\]
  To this end, we write
  \begin{align}
  &P(A^+_1 \mid \D_{\tau^{\B}_{Mn}}=-\fl{ y\sqrt{Mn}}) \nonumber \\ 
    &\le  P\left(A^+_1\cap (B^-)^c \mid \D_{\tau^{\B}_{Mn}}=-\fl{y\sqrt{Mn}}\right)
   +P(B^- \mid \D_{\tau^{\B}_{Mn}}=-\fl{
      y\sqrt{Mn}}),\label{plus}
  \end{align}
  where
  \[
B^-:=\bigg\{\inf_{\tau^{\B}_{Mn} \le i \leq \tau^{\B}_{Mn}+2n+\fl{y\sqrt{n}}}
    (\D_i - \D_{\tau^{\B}_{Mn}})\le -\fl{y\sqrt{Mn}}\bigg\}.\]
 We will bound probabilities in \eqref{plus} by coupling the $D$-walk with appropriate simple random walks, and, thus, in the remainder of the proof we will denote a simple random walk which starts at 0 and steps to the right with probability $q \in (0,1)$ on each step by $\mathcal{S}_q = \{S_{q,n}\}_{n\geq 0}$. 
For the second probability in \eqref{plus},
note that as long as $\D_i\le 0$ the probability that the $(i+1)$-th step
is up is at least $1/2$. 
Thus, we can couple the shifted $D$-walk $(\D_i + \fl{y\sqrt{Mn}})_{i\geq 1}$ with a simple symmetric random walk $\mathcal{S}_{1/2}$ so that the shifted $D$-walk is bounded below by  $\mathcal{S}_{1/2}$ until the simple random walk  $\mathcal{S}_{1/2}$ goes above $ \fl{y\sqrt{Mn}}$. 
From this we obtain that 
\begin{align*}
P\left( B^- \mid \D_{\tau^{\B}_{Mn}}=-\fl{y\sqrt{Mn}} \right)
&\leq 
P\left( \max_{k \leq 2n + \fl{y \sqrt{n}}} |S_{1/2,k}| \geq \fl{y \sqrt{Mn}} \right) \\
&\leq 4 P\left( S_{1/2,3n} \geq \fl{y \sqrt{Mn}} \right)
\end{align*}
where the second inequality follows from the reflection principle and
the assumption that $y\leq \sqrt{n}$.  Finally, applying Hoeffding's
inequality to the last probability on the right we obtain that there
exist constants $M_0,c_2 \in (0,\infty)$ such that
$P\left( B^- \mid \D_{\tau^{\B}_{Mn}}=-\fl{y\sqrt{Mn}} \right) \leq
\frac{1}{c_2} e^{-c_2 y^2}$
for all $M\geq M_0$, $n\geq 1$ and $y \leq \sqrt{n}$.

 It is left to bound the first term in \eqref{plus}.
To this end, we first note that given $\D_i$, the probability that the $\D_{i+1}-\D_i$ (i.e., the $D$-walk takes a jump up) is equal to 
\[
 \frac{(2\B_i+2)^\alpha}{(2\B_i+2)^\alpha+(2(\B_i+\D_i )+1)^\alpha} = \frac{1}{1 + \left( 1 + \frac{\D_i - \frac{1}{2}}{\B_i + 1}  \right)^\alpha}. 
\]
Since for every $D$-walk path in the event $A^+_1 \cap (B^-)^c$ we have $\D_i \geq -2\fl{y \sqrt{Mn}}$ and $\B_i \geq Mn$, we have that the probability of jumping up can always be bounded above by 
\[
 \frac{1}{1 + \left( 1 - \frac{2\fl{y \sqrt{Mn}} + \frac{1}{2}}{Mn}  \right)_+^\alpha}
\leq  \frac{1}{2}+\frac{k_0y}{\sqrt{Mn}}=:q_{y}, 
\]
where $k_0>0$ is a constant depending only on $\alpha$ (and not on $M$, $n$ or $y \in [1,\sqrt{n}]$)\footnote{Note that the constant $q_y$ depends on the parameters $\alpha$, $M$ and $n$ as well, but we emphasize only the dependence on $y$ because our final bound will be depending only on $y$ and uniform over $M\geq M_0$ and $n\geq 1$.}.
Therefore, we can easily couple the shifted $D$-walk $(\D_i + \fl{y\sqrt{Mn}})_{i\geq 1}$ with a simple random walk $\mathcal{S}_{q_y}$ so that the shifted $D$-walk is bounded above by  $\mathcal{S}_{q_y}$ until the shifted $D$-walk goes below $-\fl{y\sqrt{Mn}}$. 
Thus, we obtain that 
\begin{align*}
 P\left(A^+_1\cap (B^-)^c \mid \D_{\tau^{\B}_{Mn}}=-\fl{y\sqrt{Mn}}\right)
&\leq P\left( \max_{k\leq 2n + \fl{y \sqrt{n}}} S_{q_y,k} \geq y\sqrt{n} \right) \\
&\leq 2P\left( S_{q_y, 2n + \fl{y \sqrt{n}}} \geq y \sqrt{n} \right),
\end{align*}
where the second inequality follows again from a reflection principle and the fact that $q_y \geq \frac{1}{2}$.
If $M\geq (12 k_0)^2$ and $y\leq \sqrt{n}$ then $E[S_{q_y,2n+\fl{y\sqrt{n}}}] = \frac{2k_0 y}{\sqrt{Mn}}\left(  2n + \fl{y \sqrt{n}} \right) \leq \frac{y\sqrt{n}}{2}$, and we can conclude from Hoeffding's inequality that there is a $c_3>0$ such that
\begin{align*}
& P\left(A^+_1\cap (B^-)^c \mid \D_{\tau^{\B}_{Mn}}=-\fl{y\sqrt{Mn}}\right) \\
&\qquad \leq 2P\left( S_{q_y, 2n + \fl{y \sqrt{n}}} - E[S_{q_y, 2n+\fl{y\sqrt{n}}}] \geq \frac{y\sqrt{n}}{2} \right)
\leq \frac{1}{c_3} e^{-c_3 y^2}, 
\end{align*}
for all $M \geq (12 k_0)^2$, $n\geq 1$ and $y\leq \sqrt{n}$.
\end{proof}

\bibliographystyle{alpha}


\begin{thebibliography}{HLSH18}

\bibitem[Bil99]{bCOPM}
Patrick Billingsley.
\newblock {\em {Convergence of probability measures}}.
\newblock {Wiley Series in Probability and Statistics: Probability and
  Statistics}. John Wiley \& Sons, Inc., New York, second edition, 1999.
\newblock A Wiley-Interscience Publication.

\bibitem[CD99]{cdPUPBM}
L.~Chaumont and R.~A. Doney.
\newblock {Pathwise uniqueness for perturbed versions of {B}rownian motion and
  reflected {B}rownian motion}.
\newblock {\em Probab. Theory Related Fields}, 113(4):519--534, 1999.

\bibitem[CPY98]{cpyBetaPBM}
Philippe Carmona, Fr{\'e}d{\'e}rique Petit, and Marc Yor.
\newblock {Beta variables as times spent in {$[0,\infty[$} by certain perturbed
  {B}rownian motions}.
\newblock {\em J. London Math. Soc. (2)}, 58(1):239--256, 1998.

\bibitem[Dav90]{Dav90}
Burgess Davis.
\newblock Reinforced random walk.
\newblock {\em Probab. Theory Related Fields}, 84(2):203--229, 1990.

\bibitem[Dav96]{Dav96}
Burgess Davis.
\newblock {Weak limits of perturbed random walks and the equation
  {$Y_t=B_t+\alpha\sup\{Y_s\colon\ s\leq t\}+\beta\inf\{Y_s\colon\ s\leq
  t\}$}}.
\newblock {\em Ann. Probab.}, 24(4):2007--2023, 1996.

\bibitem[Dav99]{Dav99}
Burgess Davis.
\newblock Brownian motion and random walk perturbed at extrema.
\newblock {\em Probab. Theory Related Fields}, 113(4):501--518, 1999.

\bibitem[DK12]{dkSLRERW}
Dmitry Dolgopyat and Elena Kosygina.
\newblock {Scaling limits of recurrent excited random walks on integers}.
\newblock {\em Electron. Commun. Probab.}, 17:no. 35, 14, 2012.

\bibitem[Dol11]{dCLTERW}
Dmitry Dolgopyat.
\newblock {Central limit theorem for excited random walk in the recurrent
  regime}.
\newblock {\em ALEA Lat. Am. J. Probab. Math. Stat.}, 8:259--268, 2011.

\bibitem[EK86]{ekMP}
Stewart~N. Ethier and Thomas~G. Kurtz.
\newblock {\em {Markov processes}}.
\newblock {Wiley Series in Probability and Mathematical Statistics: Probability
  and Mathematical Statistics}. John Wiley \& Sons, Inc., New York, 1986.
\newblock Characterization and convergence.

\bibitem[GJY03]{gySGBP}
Anja G{\"o}ing-Jaeschke and Marc Yor.
\newblock {A survey and some generalizations of {B}essel processes}.
\newblock {\em Bernoulli}, 9(2):313--349, 2003.

\bibitem[HLSH18]{HLSH18}
Wilfried Huss, Lionel Levine, and Ecaterina Sava-Huss.
\newblock {Interpolating between random walk and rotor walk}.
\newblock {\em Random Structures Algorithms}, 52(2):263--282, 2018.

\bibitem[KM11]{kmLLCRW}
Elena Kosygina and Thomas Mountford.
\newblock {Limit laws of transient excited random walks on integers}.
\newblock {\em Ann. Inst. Henri Poincar{\'e} Probab. Stat.}, 47(2):575--600,
  2011.

\bibitem[KMP22]{kmpCRW}
Elena Kosygina, Thomas Mountford, and Jonathon Peterson.
\newblock Convergence of random walks with {M}arkovian cookie stacks to
  {B}rownian motion perturbed at extrema.
\newblock {\em Probab. Theory Related Fields}, 182(1-2):189--275, 2022.

\bibitem[KP16]{kpERWPCS}
Elena Kosygina and Jonathon Peterson.
\newblock {Functional limit laws for recurrent excited random walks with
  periodic cookie stacks}.
\newblock {\em Electron. J. Probab.}, 21:Paper No. 70, 24, 2016.

\bibitem[KZ13]{kzERWsurvey}
Elena Kosygina and Martin Zerner.
\newblock {Excited random walks: results, methods, open problems}.
\newblock {\em Bull. Inst. Math. Acad. Sin. (N.S.)}, 8(1):105--157, 2013.

\bibitem[Pet75]{pSOIRV}
V.~V. Petrov.
\newblock {\em {Sums of independent random variables}}.
\newblock Springer-Verlag, New York-Heidelberg, 1975.
\newblock Translated from the Russian by A. A. Brown, Ergebnisse der Mathematik
  und ihrer Grenzgebiete, Band 82.

\bibitem[PW97]{pwPBM}
Mihael Perman and Wendelin Werner.
\newblock {Perturbed {B}rownian motions}.
\newblock {\em Probab. Theory Related Fields}, 108(3):357--383, 1997.

\bibitem[RY99]{ryCMBM}
Daniel Revuz and Marc Yor.
\newblock {\em {Continuous martingales and {B}rownian motion}}, volume 293 of
  {\em {Grundlehren der Mathematischen Wissenschaften [Fundamental Principles
  of Mathematical Sciences]}}.
\newblock Springer-Verlag, Berlin, third edition, 1999.

\bibitem[T{\'o}t94]{tTSAWGBR}
B{\'a}lint T{\'o}th.
\newblock {``{T}rue'' self-avoiding walks with generalized bond repulsion on
  {${\bf Z}$}}.
\newblock {\em J. Statist. Phys.}, 77(1-2):17--33, 1994.

\bibitem[T{\'o}t95]{tTSAW}
B{\'a}lint T{\'o}th.
\newblock {The ``true'' self-avoiding walk with bond repulsion on {$\mathbf
  Z$}: limit theorems}.
\newblock {\em Ann. Probab.}, 23(4):1523--1556, 1995.

\bibitem[T{\'o}t96]{tGRK}
B{\'a}lint T{\'o}th.
\newblock {Generalized {R}ay-{K}night theory and limit theorems for
  self-interacting random walks on {${\bf Z}^1$}}.
\newblock {\em Ann. Probab.}, 24(3):1324--1367, 1996.

\end{thebibliography}

\end{document}